\numberwithin{equation}{section}
\newcommand{\de}{\delta}
\newcommand{\ph}{\varphi}
\newcommand{\st}{~\mid~}
\newcommand{\from}{:}
\renewcommand{\tilde}[1]{\widetilde{#1}}
\renewcommand{\bar}[1]{\overline{#1}}
\renewcommand{\phi}{\varphi}
\newcommand{\innerp}[2]{\left\langle{#1},{#2}\right\rangle}
\DeclarePairedDelimiter\ang{\langle}{\rangle}
\DeclarePairedDelimiter\abs{\lvert}{\rvert}
\DeclarePairedDelimiter\norm{\lVert}{\rVert}
\newtheorem{lemma}{Lemma}[section]
\newtheorem{theorem}[lemma]{Theorem}
\newtheorem{definition}[lemma]{Definition}
\newtheorem{remark}[lemma]{Remark}
\newtheorem{prop}[lemma]{Proposition}
\newcommand{\bfx}{\mathbf{x}}
\newcommand{\scrS}{\mathscr{S}}
\newcommand{\R}{\mathbb{R}}
\newcommand{\C}{\mathbb{C}}
\newcommand{\N}{\mathbb{N}}
\newcommand{\Z}{\mathbb{Z}}
\newcommand{\F}{\mathbb{F}}
\newcommand{\x}{\bfx}
\providecommand{\abs}[1]{\left\vert#1\right\vert}
\providecommand{\norm}[1]{\left\Vert#1\right\Vert}
\providecommand{\br}[1]{\langle #1 \rangle}
\def\dt{\partial_t}
\def\ep{\varepsilon}
\def\vchi{\text{\large{$\chi$}}}
\def\ls{\lesssim}
\def\p{\partial}
\def\XXint#1#2#3{{\setbox0=\hbox{$#1{#2#3}{\int}$ }
		\vcenter{\hbox{$#2#3$ }}\kern-.6\wd0}}
\DeclareMathOperator{\diverge}{div}
\DeclareMathOperator{\supp}{supp}
\newcommand{\fr}{\frac}
\renewcommand{\d}{{\rm d}}
\newcommand{\pr}[1]{\left( #1 \right)}
\renewcommand{\br}[1]{\left[ #1 \right]}
\renewcommand{\sc}{^{{c}}}
\newcommand{\bimp}{\Leftrightarrow}
\newcommand{\sm}{\setminus}
\newcommand{\sseq}{\subseteq}
\renewcommand{\S}{\mathscr{S}}
\newcommand{\stm}{\; \middle| \;}
\newcommand{\loc}{{\mathrm{loc}}}
\def\MarkLt{4pt}
\def\MarkSep{2pt}
\tikzset{
	TwoMarks/.style={
		postaction={decorate,
			decoration={
				markings,
				mark=at position #1 with
				{
					\begin{scope}[xslant=0.2]
						\draw[line width=\MarkSep,white,-] (0pt,-\MarkLt) -- (0pt,\MarkLt) ;
						\draw[-] (-0.5*\MarkSep,-\MarkLt) -- (-0.5*\MarkSep,\MarkLt) ;
						\draw[-] (0.5*\MarkSep,-\MarkLt) -- (0.5*\MarkSep,\MarkLt) ;
					\end{scope}
				}
			}
		}
	},
	TwoMarks/.default={0.5},
}
\author{Subhasish Mukherjee}
\address{
	Department of Mathematical Sciences\\
	Carnegie Mellon University\\
	Pittsburgh, PA 15213, USA
}
\email[S. Mukherjee]{subhasish@cmu.edu}
\author{Ian Tice}
\address{
	Department of Mathematical Sciences\\
	Carnegie Mellon University\\
	Pittsburgh, PA 15213, USA
}
\email[I. Tice]{iantice@andrew.cmu.edu}
\thanks{I. Tice was supported by an NSF Grant (DMS \#2204912). }
\title[On a scale of anisotropic Sobolev spaces ]{
	On a scale of anisotropic Sobolev spaces 
}
\subjclass[2020]{Primary 46E35, 46J15, 35C07; Secondary 46F05, 46N20}
\keywords{Anisotropic Sobolev spaces, Banach algebras, traveling waves}
\begin{document}
	\begin{abstract}
		We introduce a scale of anisotropic Sobolev spaces defined through a three-parameter family of Fourier multipliers and study their functional analytic properties.  These spaces arise naturally in PDE when studying traveling wave solutions, and we give some simple applications of the spaces in this direction.
	\end{abstract}
	
	\maketitle


	\section{Introduction}

	\subsection{Setup and background}

	Consider the problem of finding a solution $v : \R^d \times [0,\infty) \to \F \in \{\mathbb{R},\mathbb{C}\}$ to the equation $\dt v +  (-\Delta)^{\delta/2} v = F$ for some $1 < \delta \in \R$.  When $\delta =2$ this is the standard heat equation.  Let us further assume that $F : \R^d \to \F$ is in traveling wave form, namely $F(x,t) = f(x-\gamma t e_1)$ for some $f : \R^d \to \F$ and traveling wave speed $\gamma \in \R \backslash \{0\}$.  If we make the traveling wave ansatz $v(x,t) = u(x-\gamma t e_1)$, then we reduce to the PDE $-\gamma \partial_1 u + (-\Delta)^{\delta/2} u = f$ in $\R^d$, which rewrites on the Fourier side as 
	\begin{equation}
		[-2\pi i \gamma \xi_1 + (2\pi \abs{\xi})^{\delta} ] \hat{u}(\xi) = \hat{f}(\xi).
	\end{equation}
	Clearly, this determines $\hat{u}$ in terms of $\hat{f}$, and if we assume that $f \in H^s(\R^d;\F)$, then we have the estimate 
	\begin{equation}\label{intro_calc}
		\int_{\R^d} (\abs{\xi_1}^2 + \abs{\xi}^{2\delta}) \ang{\xi}^{2s} \abs{\hat{u}(\xi)}^2 \d\xi \asymp \int_{\R^d} \ang{\xi}^{2s} \abs{\hat{f}(\xi)}^2 \d\xi = \norm{f}_{H^s}^2.
	\end{equation}
	One can show (and we will do so later) that the space defined by the square-norm on the left is complete and consists of locally integrable functions if and only if $d > 1+\delta$.  Thus, in small dimension it is natural to seek a refinement of this estimate (which requires more information on $f$, of course) that overcomes this issue and leads to an isomorphism of Banach spaces for the operator $-\gamma \partial_1 + (-\Delta)^{\delta/2}$.

	To this end, we write $\S(\R^d;\F)$ for the Schwartz space of $\F-$valued functions and $\S'(\R^d;\F)$ for the corresponding space of $\F-$valued tempered distributions.  Given the parameters $s, r, \delta \in \R$ we define the measurable function $\omega_{s, r, \delta} \from \R^d \to [0,\infty)$ via
	\begin{equation}
		\omega_{s, r, \delta}(\xi) = \frac{\abs*{\xi_1}^2 + \abs{\xi}^{2\delta}}{\abs{\xi}^{2r}} \vchi_{B(0, 1)}(\xi) 
		+ \ang{\xi}^{2s} \vchi_{B(0, 1)^c}(\xi),
	\end{equation}
	where $\ang{\xi} = \sqrt{1 +\abs{\xi}^2}$ is the usual bracket notation.  We then define the Sobolev-type space 
	\begin{equation}
		X^s_{r, \delta}(\R^d;\F) = \left\{f \in \S'(\R^d;\F) \stm \hat{f} \in L^1_{\loc}(\R^d;\C) \text{ and } \norm{f}_{X^s_{r, \delta}} < \infty \right\},
	\end{equation}
	where $\hat{\cdot}$ denotes the Fourier transform, and the norm is defined by
	\begin{equation}
		\norm{f}_{X^s_{r, \delta}}^2 = \int_{\R^d} \omega_{s, r, \de}(\xi) \abs*{ \hat{f}(\xi)}^2 \d\xi = \int_{B(0, 1)} \frac{\xi_1^2 + \abs{\xi}^{2\delta}}{\abs{\xi}^{2r}} \abs*{ \hat f(\xi) }^2 \d\xi + \int_{B(0, 1)\sc} \langle{\xi}\rangle^{2s} \abs*{ \hat f(\xi) }^2 \d\xi.\label{asympR}
	\end{equation}
	The norm is clearly derived from the associated inner-product
	\begin{equation}
		\innerp{f}{g}_{X^s_{r, \de}} = \int_{\R^d} \omega_{s, r, \de}(\xi) \hat{f}(\xi) \bar{\hat{g}(\xi)} \,d\xi.
	\end{equation}
	Note that since $\omega_{s,r,\delta}$ is even, this inner-product takes values in $\R$ when $\F = \R$.  We further note that with this notation established, the left side of \eqref{intro_calc} is equivalent to $\norm{u}_{X^{s+\delta}_{0,\delta}}^2$.
	
	Although we have motivated the introduction of $X^s_{r, \delta}(\R^d;\F)$ with a simple linear pseudodifferential equation above, similar issues arose in recent work of the second author and collaborators on the construction of traveling wave solutions to the free boundary Navier-Stokes \cite{leoni_and_tice,stevenson_and_tice,koganemaru_and_tice}  and Muskat systems \cite{nguyen_tice}.  In these instances, the space  $X^s_{1, 2}(\R^d;\F)$ played an essential role in the construction of solutions, and we expect the new more general scale to be useful in other PDE applications.  In particular, for uses in nonlinear PDE, the question of when $X^s_{r, \delta}(\R^d;\F)$ is an algebra is of central importance.

	\subsection{Anisotropic reduction}
	
	Consider the case $\delta \le 1$.  Then for $\xi \in \R^d$ such that $\abs{\xi} \le 1$ we have that 
	\begin{equation}
		\omega_{s,r,\delta}(\xi) = \frac{\abs{\xi_1}^2 + \abs{\xi}^{2\delta}}{\abs{\xi}^{2r}} \asymp \frac{ \abs{\xi}^{2\delta} }{\abs{\xi}^{2r}} = \abs{\xi}^{2(\delta-r)},
	\end{equation}
	and so $X^s_{r,\delta}(\R^d;\F) = \dot{H}^{(\delta-r,s)}(\R^d;\F)$, where for $\lambda,\rho \in \R$ we define the bihomogeneous Sobolev space
	\begin{equation}
		\dot{H}^{(\lambda,\rho)}(\R^d;\F) = \left\{f \in \S'(\R^d;\F) \stm \hat{f} \in L^1_{\loc}(\R^d;\C) \text{ and } \norm{f}_{ \dot{H}^{(\lambda,\rho)}} < \infty \right\}
	\end{equation}
	with
	\begin{equation}
		\norm{f}_{ \dot{H}^{(\lambda,\rho)}}^2 = \int_{B(0, 1)}\abs{\xi}^{2\lambda}\abs*{ \hat f(\xi) }^2 \d\xi + \int_{B(0, 1)\sc} \abs{\xi}^{2\rho} \abs*{ \hat f(\xi) }^2 \d\xi.
	\end{equation}
	This shows that when $\delta \le 1$ the space $X^s_{r,\delta}(\R^d;\F)$ is actually isotropic, and the pair of parameters $(r,\delta)$ reduce to the single parameter $\delta-r\in \R$.  
	
	Similarly, consider the case $d = 1$. We then note that for $\xi \in \R$ with $\abs \xi < 1$ we have
	\begin{align}
		\omega_{s, r, \delta}(\xi) = \frac{\abs {\xi_1}^2 + \abs \xi^{2\delta}}{\abs \xi^{2r}} = \frac{\abs {\xi}^2 + \abs \xi^{2\delta}}{\abs \xi^{2r}} \asymp \abs \xi^{\min\{2(\delta - r), 2(1 - r)\}}
	\end{align}
	and so again we reduce to  $X^s_{r, \delta}(\R; \F) = \dot H^{\delta - r, s}(\R; \F)$ or $X^s_{r, \delta}(\R; \F) = \dot H^{1 - r, s}(\R; \F)$ depending on whether $\delta \le 1$ or $\delta > 1$.
	
	As such, in this paper we will focus our attention on the more interesting regime $\delta >1$ and $d \ge 2$, in which case the space $X^s_{r,\delta}(\R^d;\F)$  is genuinely anisotropic, as we will see later.

	\subsection{Main results}
	
	Our goal in the present paper is two-fold.  First, we aim to study the functional analytic properties of this generalized scale, including embeddings into classical spaces, completeness, and under which parameter regime this space is an algebra.  Second, we will provide some simple uses of these spaces in constructing traveling wave solutions to some simple PDEs to provide an elementary demonstration of the use of this type of space.
	
	The following theorem summarizes the properties of $X^s_{r,\delta}(\R^d;\F)$ we will prove in Sections \ref{sec_prelim} and \ref{sec_algebra}.  Then in Section \ref{sec_PDE} we will record the PDE applications.

	\begin{theorem}
		Let $s,r,\delta \in \R$ and $d \in \N$ with $\delta >1$ and $d \ge 2$.  Then the following hold.  
		\begin{enumerate}
			\item $X^s_{r,\delta}(\R^d;\F)$ is a Hilbert space if and only if $1 +\delta -2r < d$.  In either case, we have the continuous inclusion $X^s_{r,\delta}(\R^d;\F) \hookrightarrow C^\infty_0(\R^d;\F) + H^s(\R^d;\F),$ where $C^\infty_0(\R^d;\F) = \bigcap_{k \in \N} C^k_0(\R^d;\F)$  is endowed with its standard Fr\'{e}chet topology.
			
			\item $H^s(\R^d;\F) \hookrightarrow X^s_{r,\delta}(\R^d;\F)$ if and only if $r \le 1$.   
			
			\item If  $1 +\delta -2r < d$ and $r \le 1$ then $X^s_{r,\delta}(\R^d;\F)$ is anisotropic in the sense that it is not closed under composition with rotations.  More precisely, there exist $f \in X^s_{r,\delta}(\R^d;\F) \cap C^\infty_0(\R^d;\F)$ such that $f \circ Q \notin X^s_{r,\delta}(\R^d;\F)$ whenever $Q \in O(d)$ satisfies $\abs{Qe_1 \cdot e_1} < 1$.  In particular, the subspace inclusion $H^s(\R^d;\F) \subset X^s_{r,\delta}(\R^d;\F)$ is strict in this parameter regime. 
			
			\item If $d > 1 + \delta - 2r$ and $s > d/2$,  $f \in X^s_{r, \delta}(\R^d; \F)$ and $g \in H^s(\R^d; F)$, then $fg \in H^s(\R^d; \F)$ and there exists a constant $C > 0$ such that $\norm*{fg}_{H^s} \le C \norm{f}_{X^s_{r, \delta}} \norm{g}_{H^s}$ 
			
			\item Suppose  $d > 1 + \delta - 2r$, $r \le 1$, and $s > d/2$.  If $d \ge 3$, then $X^s_{r, \delta}(\R^d; \F)$ is an algebra.  If $d = 2$, then $X^s_{r, \delta}(\R^d;\F)$ is an algebra if and only if $\delta \le 2$.  In particular, in this parameter regime, when $X^s_{r, \delta}(\R^d; \F)$ is an algebra, (4) says that $H^s(\R^d;\F) \subset X^s_{r, \delta}(\R^d; \F)$ is an ideal.

			\item  If $\delta - r \le t \le s$ and $f \in X^s_{r, \delta}(\R^d; \F)$,  then $\pr{-\Delta}^{t/2} f \in H^{s - t}(\R^d ;\F)$ and $\partial_1 f \in \dot{H}^{-r}(\R^d;\F)$  and $\norm*{\pr{-\Delta}^{t/2} f}_{H^{s - t}}  + \norm*{\partial_1 f}_{\dot{H}^{-r}} \lesssim \norm*{f}_{X^s_{r, \delta}}$.
		\end{enumerate}
	\end{theorem}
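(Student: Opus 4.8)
The plan is to prove all six items from one common engine: the spectral splitting $f = f_\flat + f_\sharp$, where $\widehat{f_\flat} = \hat f\,\vchi_{B(0,1)}$ and $\widehat{f_\sharp} = \hat f\,\vchi_{B(0,1)\sc}$, together with the single scalar computation
\begin{equation}
	\int_{B(0,1)} \frac{1}{\omega_{s,r,\delta}(\xi)}\,\d\xi = \int_{B(0,1)} \frac{\abs{\xi}^{2r}}{\xi_1^2 + \abs{\xi}^{2\delta}}\,\d\xi .
\end{equation}
I would evaluate this by splitting $B(0,1)$ into the regimes $\abs{\xi_1} \gtrsim \abs{\xi}^{\delta}$ and $\abs{\xi_1} \lesssim \abs{\xi}^{\delta}$; writing $\xi = (\xi_1,\xi')$ and integrating in $\xi_1$ first, the flat regime dominates and leaves a radial integral $\int_0 s^{\,d-2+2r-\delta}\,\d s$ in $s = \abs{\xi'}$, so that $\omega^{-1} \in L^1(B(0,1))$ \emph{if and only if} $1 + \delta - 2r < d$. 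This dichotomy drives items (1), (4) and (5).

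For (1), $f \mapsto \hat f$ is by construction an isometry of $X^s_{r,\delta}$ onto a subspace of $L^2(\R^d, \omega\,\d\xi)$. When $1 + \delta - 2r < d$, Cauchy--Schwarz against $\omega^{-1} \in L^1(B(0,1))$ shows every $g \in L^2(\omega)$ is locally integrable, so the isometry is onto and $X^s_{r,\delta}$ inherits completeness; when $1 + \delta - 2r \ge d$ I would use $\omega^{-1} \notin L^1(B(0,1))$ to build $g \in L^2(\omega) \setminus L^1_\loc$, producing a Cauchy sequence with no admissible limit, hence incompleteness. For the inclusion, $f_\sharp \in H^s$ with $\norm{f_\sharp}_{H^s} \le \norm{f}_{X^s_{r,\delta}}$ is immediate, while $\widehat{f_\flat}$ is compactly supported and (from the defining local integrability) in $L^1$, so $f_\flat \in C^\infty_0$ by Riemann--Lebesgue applied to $\xi^\alpha \widehat{f_\flat}$, the seminorm bounds following from $\int_{B(0,1)} \abs{\xi}^{2\abs{\alpha}} \omega^{-1} < \infty$. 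Item (2) reduces, after cancelling the common high-frequency weight, to boundedness of $\omega$ on $B(0,1)$: along $e_1$ one has $\omega \asymp \abs{\xi_1}^{2-2r}$ and transversally $\omega \asymp \abs{\xi}^{2\delta-2r}$, so $\omega \in L^\infty(B(0,1))$ exactly when $r \le 1$ (the $e_1$ direction is binding since $\delta > 1$), while unboundedness for $r > 1$ yields $f \in H^s \setminus X^s_{r,\delta}$ by concentrating $\hat f$ on the axis. Item (6) is a pointwise multiplier comparison: on $B(0,1)$, $\abs{\xi}^{2t} \lesssim \omega$ iff $\abs{\xi}^{2(t+r)} \lesssim \xi_1^2 + \abs{\xi}^{2\delta}$, i.e.\ iff $t \ge \delta - r$, giving $(-\Delta)^{t/2}f \in H^{s-t}$; likewise $\xi_1^2\abs{\xi}^{-2r} \le \omega$ on $B(0,1)$ and $\xi_1^2\abs{\xi}^{-2r} \lesssim \ang{\xi}^{2s}$ on $B(0,1)\sc$ (using $s \ge \delta - r > 1-r$) give $\partial_1 f \in \dot{H}^{-r}$.

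For (3) I would exhibit a single $f \in X^s_{r,\delta} \cap C^\infty_0$ failing every nontrivial rotation simultaneously, by concentrating $\hat f$ near $\{\xi_1 = 0\}$. Concretely, take $\hat f$ supported in $B(0,1/2)$, even, radial in $\xi'$, living in the slab $\abs{\xi_1} \le \abs{\xi'}^{\delta}$, with a logarithmically corrected power profile so that $\int \frac{\xi_1^2 + \abs{\xi}^{2\delta}}{\abs{\xi}^{2r}} \abs{\hat f}^2 < \infty$ (so $f \in X^s_{r,\delta}$) while $\int \frac{\abs{\xi'}^2}{\abs{\xi}^{2r}} \abs{\hat f}^2 = \infty$; this separation is possible precisely because $\delta > 1$, and $\hat f \in L^1$ with compact support forces $f \in C^\infty_0$. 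For $Q \in O(d)$ with $\abs{Qe_1 \cdot e_1} < 1$ one has $\widehat{f \circ Q} = \hat f \circ Q$, so $\norm{f \circ Q}_{X^s_{r,\delta}}^2 \asymp \int \frac{(\xi \cdot Qe_1)^2 + \abs{\xi}^{2\delta}}{\abs{\xi}^{2r}} \abs{\hat f}^2$; writing $Qe_1 = c\,e_1 + v'$ with $v' \ne 0$, the radiality in $\xi'$ turns the transverse term into a positive multiple of the divergent integral, and a truncation-plus-Cauchy--Schwarz argument absorbs the $\xi_1$ cross term, forcing $f \circ Q \notin X^s_{r,\delta}$. The same $f$ has $\int_{B(0,1)} \abs{\hat f}^2 = \infty$ (this uses $r \le 1$), so $f \notin H^s$, giving the asserted strictness.

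For (4), split $f = f_\flat + f_\sharp$: since $f_\sharp \in H^s$ and $s > d/2$, the $H^s$ algebra property gives $\norm{f_\sharp g}_{H^s} \lesssim \norm{f_\sharp}_{H^s}\norm{g}_{H^s} \le \norm{f}_{X^s_{r,\delta}}\norm{g}_{H^s}$, while for the low piece I would use $\ang{\xi} \lesssim \ang{\xi - \eta}$ for $\eta \in B(0,1)$ and Young's inequality $L^1 * L^2 \to L^2$ to get $\norm{f_\flat g}_{H^s} \lesssim \norm{\widehat{f_\flat}}_{L^1}\norm{g}_{H^s}$, finishing with $\norm{\widehat{f_\flat}}_{L^1} \le \norm{\omega^{-1/2}}_{L^2(B(0,1))}\norm{f}_{X^s_{r,\delta}}$ (here $d > 1+\delta-2r$ enters). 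Item (5) then reduces, via (2) and (4), to the low--low interaction $\norm{f_\flat g_\flat}_{X^s_{r,\delta}} \lesssim \norm{f}_{X^s_{r,\delta}}\norm{g}_{X^s_{r,\delta}}$: terms with a high factor land in $H^s \hookrightarrow X^s_{r,\delta}$ (using $r \le 1$) by (4), and the annulus $1 \le \abs{\xi} \le 2$ portion is handled by Young after peeling off the parts of $\widehat{f_\flat}, \widehat{g_\flat}$ supported where $\omega \gtrsim 1$. This leaves the anisotropic weighted bilinear estimate
\begin{equation}
	\int_{B(0,1)} \omega\, \abs{\widehat{f_\flat} * \widehat{g_\flat}}^2\,\d\xi \lesssim \norm{f}_{X^s_{r,\delta}}^2 \norm{g}_{X^s_{r,\delta}}^2 ,
\end{equation}
which is the crux. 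I would attack it by an anisotropic Littlewood--Paley decomposition indexed by the transverse scale $\abs{\xi'} \sim a$ with companion longitudinal scale $\abs{\xi_1} \lesssim a^{\delta}$, summing all input/output scale triples. The extremal configuration is concentration near the $e_1$-axis---inputs spread on the transverse sphere near $\{\xi_1=0\}$ whose convolution is pushed onto the axis, where $\omega \asymp \abs{\xi_1}^{2-2r}$ is anomalously large---which contributes a power $a^{E}$ with $E = \delta(d-2) + 2r(2-\delta)$. The estimate holds uniformly as $a \to 0$ precisely when $E \ge 0$; under the standing hypotheses this is automatic for $d \ge 3$, whereas for $d = 2$ it becomes $2r(2-\delta) \ge 0$, i.e.\ $\delta \le 2$. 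For $d=2$ and $\delta > 2$ the same near-axis profile, now used as input, furnishes $f,g \in X^s_{r,\delta}$ with $fg \notin X^s_{r,\delta}$, and the ideal statement is immediate from (4). The main obstacle is exactly this bilinear estimate: proving the matching \emph{upper} bound requires controlling every scale interaction (not only the extremal one) and summing cleanly, and confirming that the near-axis interaction is truly extremal is what pins down the $d \ge 3$ versus $d = 2,\ \delta \le 2$ dichotomy.
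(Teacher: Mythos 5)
Your items (1), (2), (3), (4), and (6) are essentially correct, and several take genuinely different routes from the paper that are worth noting. For the incompleteness direction of (1), you build $g \in L^2(\omega\,\d\xi)\setminus L^1_{\loc}$ directly from $\omega^{-1}\notin L^1(B(0,1))$ and truncate to get a Cauchy sequence with no admissible limit; the paper instead assumes completeness, invokes the open mapping theorem to extract the bound $\|\hat f \vchi_{B(0,1)}\|_{L^1}\lesssim \norm{f}_{X^s_{r,\delta}}$, and contradicts it with indicators of anisotropic rectangles $[\ep^\delta/2,3\ep^\delta/2]\times[\ep/2,3\ep/2]^{d-1}$. Your route is more elementary and equally valid. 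For (3), your single slab profile ($\hat f$ supported in $\abs{\xi_1}\le\abs{\xi'}^\delta$, radial in $\xi'$, power $\beta = 1-2r+\delta+d$) buys something the paper's proof does not literally give: one universal $f$ that fails for \emph{every} $Q$ with $\abs{Qe_1\cdot e_1}<1$ simultaneously (the radiality in $\xi'$ handles any $v'=Qe_1-(Qe_1\cdot e_1)e_1\neq 0$), whereas the paper builds a $Q$-dependent lacunary sum $\sum_k 4^{\alpha k}F_{4^{-k}}$ of rectangles oriented by the signs of $Qe_1\cdot e_k$; your exponent bookkeeping closes correctly, with $\delta>1$ and $d>1+\delta-2r$ ensuring the $X$-norm and $L^1$ integrals converge while the transverse moment diverges. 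For (4), your Peetre-inequality-plus-Young argument ($\ang{\xi}^s\lesssim\ang{\xi-\eta}^s$ for $\eta\in B(0,1)$, valid since $s>d/2>0$) replaces the paper's $C^k_b$-multiplier-and-interpolation argument and is cleaner. Item (6) matches the paper's pointwise multiplier comparison, including the observation that $\delta-r\le t\le s$ with $\delta>1$ supplies $1-r\le s$.

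Item (5) is where you have a genuine gap, and you flag it yourself: identifying the extremal configuration is not a proof of the upper bound. Your near-axis configuration is indeed exactly the paper's counterexample for $d=2$, $\delta>2$ (boxes centered at $(2^{-m\delta},\pm 2^{-m})$ whose sumset sits near the $e_1$-axis, where $\mu_{r,\delta}(\zeta)\asymp\abs{\zeta}^{1-r}$ dwarfs $\abs{\zeta}^{\delta-r}$), and your sign condition $E\ge 0$ reproduces the paper's threshold $r(2-\delta)\ge 0$. But the positive half requires controlling all interactions, and the paper's proof shows this needs strictly more than scale counting: after reducing to the trilinear functional $I$ and disposing of the unbalanced region $E_0$ (where $\mu_{r,\delta}(\xi+\eta)\lesssim\mu_{r,\delta}(\xi)+\mu_{r,\delta}(\eta)$ and $1/\mu_{r,\delta}\in L^2(B(0,1))$), the balanced region is cut into shells $E_{m,n}$ and split at the threshold $n=km$ with $k=2(\delta-r)/(1-r+d/2)$. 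For $n\ge km$ an isotropic cube decomposition yields $I_{m,n}\lesssim 2^{2m(\delta-r)-n(1-r+d/2)}\norm{F_m}_{L^2}\norm{G_m}_{L^2}\norm{H_n}_{L^2}$, which sums; but in the regime $m\le n\le km$ (nonempty exactly when $d<4\delta-2r-2$) this bound diverges, and the paper must pass to anisotropic rectangles $R_{p,\pi}$ of longitudinal width $2^{-\delta m}$ and exploit the lower bounds $\mu_{r,\delta}(\xi)\gtrsim 2^{-m(\delta-r)}\abs{p}$ to win the summable factor $1/\abs{p}+1/\abs{q}$. That gain comes from localization in $\xi_1$ within a fixed pair of scales $(m,n)$, not from the transverse scale $a$, so it is invisible to your single-exponent $a^E$ heuristic; the "summing cleanly over all scale triples" you defer is precisely where the $d\ge 3$ versus $d=2,\ \delta\le 2$ dichotomy is actually established. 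Two smaller points: your claim that the same profile "furnishes $f,g\in X^s_{r,\delta}$ with $fg\notin X^s_{r,\delta}$" needs either a lacunary summed construction or a closed-graph reduction, since a single box pair only disproves the bilinear \emph{bound}; and your reduction of (5) to the low-low piece correctly anticipates the annulus issue ($\widehat{f_\flat}$ need not lie in $L^2$ near the origin), which your peeling at $\abs{\eta}\ge 1/2$, where $\omega\asymp 1$, does resolve.
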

	
	The regions where $X^s_{r, \delta}$ is an algebra are outlined in Figure \ref{fig1}.

	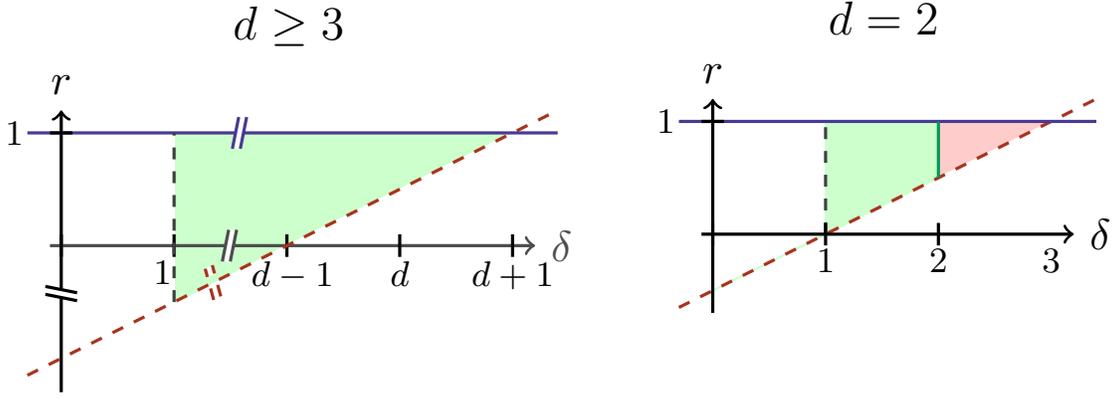
\begin{figure}[!h]
\begin{tabular}{cc}

\scalebox{1.5}{
			\begin{tikzpicture}[domain=0:4]
				
				\draw[fill=green!20, draw = green!0] plot[smooth, samples=100, domain=2:4] (\x,{(\x-2)/2}) -| (1,-0.525) -- cycle;
				\draw[thick, ->, TwoMarks = 0.35] (0,-1.3) -- (0,1.2) node[above] {$r$};
				\draw[->,thick,black!70,TwoMarks=0.37] (-0.1,0) -- (4.2,0) node[right] {$\delta$}; 
				\foreach \i in {0, 1,...,4} \draw[thick] (\i,-0.1)--(\i,.1);
				\node[below] at (0.9,0) {\footnotesize $1$};
				\node[below] at (2.05,0) {\footnotesize $d-1$};
				\node[below] at (3,0) {\footnotesize $d$};
				\node[below] at (4,0) {\footnotesize $d + 1$};

				\draw[thick, dashed, Mahogany, TwoMarks = 0.35] (-0.3,-1.15) -- (4.4,1.2);
				\draw[thick,BlueViolet,TwoMarks=0.4] (-0.3,1) -- (4.4,1);
				\draw[thick,dashed, darkgray] (1,-0.5) -- (1, 1);
				\draw[thick] (-0.1, 1) -- (0.1, 1);
				\node[left] at (-0.2,1) {\footnotesize $1$};
				
				\node[above,font=\large\bfseries] at (current bounding box.north) {$d \ge 3$};
			\end{tikzpicture}
		}
&

		\scalebox{1.5}{
			\begin{tikzpicture}[domain=0:4]
				
				\draw[fill=green!20, draw = green!20] plot[smooth, samples=100, domain=-0.1:2] (\x,{(\x-1)/2}) -| (1,0) -- cycle;
				\draw[fill=green!20, draw = green!20] plot[smooth, samples=100, domain=1:2] (2,\x/2) -| (1,1/2) -- cycle;
				\draw[fill=red!20, draw = red!0] plot[smooth, samples=100, domain=2:3] (\x,{(\x - 1)/2}) -| (2,0) -- cycle;

				\draw[thick, dashed, Mahogany] (-0.3,-0.65) -- (3.4,1.2);
				\draw[thick,BlueViolet] (-0.3,1) -- (3.4,1);
				\draw[thick,ForestGreen] (2,0.5) -- (2, 1);
				\draw[thick,dashed, darkgray] (1,0) -- (1, 1);
				\draw[thick, ->] (0,-0.7) -- (0,1.2) node[above] {$r$};
                    \draw[thick, white] (0,-1.4) -- (0, -0.7);
				\draw[->,thick,black] (-0.1,0) -- (3.2,0) node[right] {$\delta$}; 
				\foreach \i in {0, 1, 2} \draw[thick] (\i,-0.1)--(\i,.1);
				\node[below] at (1,0) {\footnotesize $1$};
				\node[below] at (2,0) {\footnotesize $2$};
				\node[below] at (3,0) {\footnotesize $3$};
				\draw[thick] (-0.1, 1) -- (0.1, 1);
				
				\node[left] at (-0.2,1) {\footnotesize $1$};
				\node[above,font=\large\bfseries] at (current bounding box.north) {$d = 2$};
			\end{tikzpicture}
		}

\end{tabular}
		\caption{On the left: For $d \ge 3$, we restrict to the anisotropic region with the grey line marking $\delta > 1$. We show that $X^s_{r, \delta}(\R^d;\F)$ is complete when $d > 1 + \delta - 2r$, the region bounded by the red line. The classical Sobolev space $H^s$ embeds into $X^s_{r, \delta}$ when $r \le 1$, shown by the blue line. In this region, we show that $X^s_{r, \delta}$ is always an algebra.  On the right: We have the same constraints for $d = 2$. However, in this case $X^s_{r, \delta}$ is an algebra if and only if when $\delta \le 2$, marked by the green line and everything to its left. }
		\label{fig1}
	\end{figure}

	\section{Preliminary estimates}\label{sec_prelim}
	We begin with some useful estimates and bounds.  To motivate the first we make the following remark.

	\begin{remark}\label{R_ball_remark}
		The unit radius employed in $\omega_{s,r,\de}$ is not essential.  Indeed, it's straightforward to verify that the map
		\begin{equation}
			X^s_{r, \delta}(\R^d;\F) \ni f \mapsto  \left( \int_{B(0, R)} \frac{\xi_1^2 + \abs{\xi}^{2\delta}}{\abs{\xi}^{2r}} \abs*{ \hat f(\xi) }^2 \d\xi + \int_{B(0, R)\sc} \langle{\xi}\rangle^{2s} \abs*{ \hat f(\xi) }^2 \d\xi \right)^{1/2}
		\end{equation}
		yields an equivalent norm for every $R >0$.
	\end{remark}
	
	The next result is a crucial property characterizing integrability of the reciprocal of our multiplier $\omega_{s, r, \delta}$ around the origin.
	
	\begin{lemma}\label{low_freq_est}
		Let  $1\le d \in \N$ and suppose that $r, \delta \in \R$ with $d > {1 + \de - 2r}$ and $\delta > 1$. If $R > 0$, then 
		\begin{align}
			\int_{B(0, R)} \frac{\abs{\xi}^{2r}}{\xi_1^2 + \abs{\xi}^{2\delta}} \,\d\xi < \infty.\label{intsquare}
		\end{align}    
	\end{lemma}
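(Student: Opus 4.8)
The plan is to localize the integral near the origin and then pass to polar coordinates, which converts the anisotropy in the denominator into a one-dimensional concentration phenomenon on the unit sphere. First I would observe that the integrand $\abs{\xi}^{2r}/(\xi_1^2 + \abs{\xi}^{2\delta})$ is continuous and strictly positive on $\R^d \setminus \{0\}$, since there the denominator is bounded below by $\abs{\xi}^{2\delta} > 0$. Consequently its integral over $B(0,R) \setminus B(0,\ep)$ is finite for every $\ep \in (0,R)$, so it suffices to control the behavior as $\xi \to 0$; in particular the value of $R$ is irrelevant, consistent with Remark \ref{R_ball_remark}.

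Writing $\xi = \rho\theta$ with $\rho = \abs{\xi}$ and $\theta \in S^{d-1}$, and using the factorization $\xi_1^2 + \abs{\xi}^{2\delta} = \rho^2\bigl(\theta_1^2 + \rho^{2(\delta-1)}\bigr)$ with $\theta_1 = \theta \cdot e_1$, the Jacobian $\rho^{d-1}$ produces
\begin{equation}
	\int_{B(0, R)} \frac{\abs{\xi}^{2r}}{\xi_1^2 + \abs{\xi}^{2\delta}} \,\d\xi = \int_0^R \rho^{2r + d - 3} J(\rho)\,\d\rho, \qquad J(\rho) := \int_{S^{d-1}} \frac{\d\sigma(\theta)}{\theta_1^2 + \rho^{2(\delta-1)}}.
\end{equation}
The crux is to bound the angular integral $J(\rho)$ as $\rho \to 0^+$. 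Setting $\eta = \rho^{\delta-1}$, which tends to $0$ because $\delta > 1$, for $d \ge 2$ I would slice $S^{d-1}$ along the latitude $t = \theta_1 \in [-1,1]$, so that $\d\sigma = \abs{S^{d-2}}(1-t^2)^{(d-3)/2}\,\d t$ after integrating out the lower sphere, giving
\begin{equation}
	J(\rho) = \abs{S^{d-2}} \int_{-1}^1 \frac{(1-t^2)^{(d-3)/2}}{t^2 + \eta^2}\,\d t.
\end{equation}
Splitting at $\abs t = 1/2$: on $\{\abs t \le 1/2\}$ the weight is bounded and $\int \d t/(t^2+\eta^2) \le \pi/\eta$, while on $\{1/2 < \abs t \le 1\}$ the denominator exceeds $1/4$ and the weight is integrable near $t = \pm 1$ (its exponent $(d-3)/2$ exceeds $-1$ precisely because $d \ge 2$), contributing $O(1)$. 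This yields the key estimate $J(\rho) \lesssim \eta^{-1} = \rho^{1-\delta}$ for $\rho \le 1$, which quantifies the concentration of surface mass near the equator $\theta_1 = 0$.

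Substituting this bound back gives $\int_0^\ep \rho^{2r+d-3} J(\rho)\,\d\rho \lesssim \int_0^\ep \rho^{2r + d - 2 - \delta}\,\d\rho$, and the right-hand side is finite exactly when $2r + d - 2 - \delta > -1$, i.e.\ when $d > 1 + \delta - 2r$, which is the hypothesis. The degenerate case $d = 1$ I would treat separately and elementarily: there $S^{d-1} = S^0$ never meets the equator, so $J(\rho) \asymp 1$, the denominator satisfies $\xi^2 + \abs{\xi}^{2\delta} \asymp \xi^2$ near $0$, and the integral reduces to $\int_0^\ep \rho^{2r-2}\,\d\rho$, which converges because the hypothesis forces $r > \delta/2 > 1/2$. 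The main obstacle is the angular estimate: one must capture that, although the degeneracy set $\{\theta_1 = 0\}$ has positive dimension, the regularization $\rho^{2(\delta-1)}$ makes $J(\rho)$ blow up only at the rate $\rho^{1-\delta}$, and it is the balance between this rate and the radial Jacobian power that produces the sharp threshold $d = 1 + \delta - 2r$.
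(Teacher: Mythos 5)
Your proof is correct and is essentially the paper's argument: both pass to polar coordinates, reduce the angular part to the latitude integral $\int_{-1}^{1}(1-t^2)^{(d-3)/2}\,(t^2+\rho^{2\delta-2})^{-1}\,\d t$, extract the concentration rate $J(\rho)\lesssim \rho^{1-\delta}$ using $\delta>1$, and finish by radial power counting to reach the same threshold $d>1+\delta-2r$. The only differences are refinements in your favor: your splitting at $\abs{t}=1/2$ treats all $d\ge 2$ uniformly, whereas the paper needs a separate exact computation for $d=2$ since its pointwise bound $(1-u^2)^{(d-3)/2}\le 1$ fails there, and you also handle $d=1$, a case permitted by the lemma's hypothesis $1\le d$ but silently omitted by the paper's written proof, which begins at $d=2$.
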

	\begin{proof}
		We first consider the case when $d = 2$, in which case  we compute: 
		\begin{multline}
			\int_{B(0, R)} \frac{\abs{\xi}^{2r}}{\xi_1^2 + \abs{\xi}^{2\delta}} \,\d\xi 
			= \int_0^R \int_0^{2\pi}  \frac{\rho^{2r}}{\rho^2 \cos^2(\theta) + \rho^{2\delta}} \rho \, \d \theta \d \rho \\
			= \int_0^R \rho^{2r - 1} \int_0^{2\pi}  \frac{1}{\cos^2(\theta) + \rho^{2\delta - 2}} \,\d \theta \d \rho = 2\pi \int_0^R \rho^{2r - 1}  \frac{1}{\rho^{\de - 1} \sqrt{1 + \rho^{2\delta - 2}}} \,\d \rho
		\end{multline}
		Since $\delta > 1$, we know $\sqrt{1 + \rho^{2\de - 2}} \asymp 1$ in $B(0, R)$ and so the last integral is finite if and only if $2r - \de > -1 = 1-d$. 
		
		Next suppose $d \geq 3$.  We can write (\ref{intsquare}) in spherical coordinates to find
		\begin{equation}
			I := \int_{B(0, R)} \frac{\abs{\xi}^{2r}}{\xi_1^2 + \abs{\xi}^{2\delta}} \,\d\xi = \int_0^R \int_0^{2\pi} \int_{[0, \pi]^{d - 2}} \frac{{\rho}^{2r}}{(\rho\cos \ph_{d - 2})^2 + \rho^{2\delta}} \rho^{d - 1} (\sin \ph_{d - 2})^{d - 2} g_d(\ph_1, \dots, \ph_{d-3}) \,\d \phi {\rm d}\theta {\rm d}\rho 
		\end{equation}
		where $\d \phi = \prod_{i = 1}^{d - 2} \d \ph_i$ and $g_d(\ph_1, \dots, \ph_{d-3}) = \prod_{i = 1}^{d - 3} \sin^i(\phi_i)$. Integrating over $\ph_1, \dots, \ph_{d-3}$ and changing variables with $u = \sin \phi_{d-2}$ we have  
		\begin{multline}
			I = C(d) \int_0^R \int_0^\pi \frac{\rho^{2r + d - 1}}{\rho^2 \cos^2 \ph_{d-2} + \rho^{2\delta}}(\sin\ph_{d-2})^{d - 2} \;{\rm d}\ph_{d-2} {\rm d}\rho
			= C(d)\int_0^R \int_{-1}^1 \frac{\rho^{2r + d - 1}}{\rho^2 u^2 + \rho^{2\delta}} (1 - u^2)^{\frac{d - 3}{2}}\; {\rm d}u {\rm d}\rho \\
			\le  C(d)\int_0^R \rho^{2r + d - 3} \int_{-1}^1 \frac{1}{u^2 + \rho^{2\delta - 2}}  {\rm d}u {\rm d}\rho 
			=2 C(d) \int_0^R \rho^{2r + d - 3} \frac{1}{\rho^{\delta - 1}} \arctan\left( \frac1{\rho^{\delta - 1}} \right) \; {\rm d}\rho \\
			\asymp  \int_0^R \rho^{2r + d - \delta -2 } \,\d \rho,
		\end{multline}
		where we used that $\arctan(\rho^{1 -\delta}) \asymp 1$ for $\rho < R$ since $\delta > 1$. The latter integral is finite if and only if $d > 1 + \delta - 2r$, giving the desired result.
	\end{proof}

	Using the previous lemma, we can now discern when $\norm{\hat f}_{L^1}$ is bounded by $\norm*{f}_{X^s_{r, \delta}}$ and show that functions in $X^s_{r, \delta}$ are sums of a smooth function and something in $H^s$.
	
	\begin{prop}\label{thm_fourier_split_estimates}
		Suppose that $d > 1 + \delta - 2r$, and let $R > 0$.  Then the following hold.
		\begin{enumerate}
			\item There exists a constant $C = C(R, r, d, \delta, s)> 0$ such that if $f \in X^s_{r, \delta}(\R^d;\F)$, then 
			\begin{equation}\label{fourier_split_estimates_0}
				\int_{B(0, R)} \abs*{\hat f(\xi)} \;\d\xi + \pr{\int_{B(0, R)\sc} (1 + \abs \xi^2)^s \abs*{\hat f(\xi)}^2 \; \d\xi}^{1/2} \le C \norm*{f}_{X^s_{r, \delta}}.
			\end{equation}
			Moreover, if $s > d/2$, then $\norm*{ \hat f }_{L^1} \leq C \norm*{f}_{X^s_{r,\delta}}$ for some $C = C(r, d,\delta, s) > 0$.
			
			\item  For $f \in X^s_{r,\delta}(\R^d;\F)$  define the projections $f_{l,R} = (\hat{f} \chi_{B(0,R)})^\vee$ and $f_{h,R} = (\hat{f} \chi_{B(0,R)^c})^\vee$.  Then $f_{l,R}, f_{h,R}\in X^s_{r,\delta}(\R^d;\F)$, $f = f_{l,R} + f_{h,R}$, and we have the bounds $\norm{f_{l,R}}_{X^s_{r,\delta}} \le \norm{f}_{X^s_{r,\delta}}$ and $\norm{f_{h,R}}_{X^s_{r,\delta}} \le \norm{f}_{X^s_{r,\delta}}$.  Moreover, for each $k \in \N$ we have that $f_{l,R} \in C^k_b(\R^d;\F)$ with the estimate $\norm{f_{l,R}}_{C^k_b} \le C(k) \norm{f_{l,R}}_{X^s_{r,\delta}}$, and $f_{h,R} H^s(\R^d;\F)$ with the estimate $\norm{f_{h,R}}_{H^s} \ls \norm{f_{h,R}}_{X^s_{r,\delta}}$.
			
			\item We have the continuous inclusion $X^s_{r,\delta}(\R^d;\F) \hookrightarrow C^\infty_0(\R^d;\F) + H^s(\R^d;\F)$.
		\end{enumerate}
	\end{prop}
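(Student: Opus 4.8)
The plan is to establish the three parts in order, since (2) and (3) both rest on the $L^1$ control furnished by (1). For part (1) the low-frequency term yields to a weighted Cauchy--Schwarz inequality. I would factor the integrand as
\[
\int_{B(0,R)} \abs{\hat f(\xi)} \,\d\xi = \int_{B(0,R)} \frac{\abs{\xi}^{r}}{\sqrt{\xi_1^2 + \abs{\xi}^{2\delta}}} \cdot \frac{\sqrt{\xi_1^2 + \abs{\xi}^{2\delta}}}{\abs{\xi}^{r}} \abs{\hat f(\xi)} \,\d\xi
\]
and apply Cauchy--Schwarz, which separates the purely geometric factor $\pr{\int_{B(0,R)} \frac{\abs{\xi}^{2r}}{\xi_1^2 + \abs{\xi}^{2\delta}} \,\d\xi}^{1/2}$ --- finite by Lemma \ref{low_freq_est} in the regime $\delta>1$ --- from $\pr{\int_{B(0,R)} \frac{\xi_1^2 + \abs{\xi}^{2\delta}}{\abs{\xi}^{2r}} \abs{\hat f}^2 \,\d\xi}^{1/2}$, which is bounded by a constant times $\norm{f}_{X^s_{r,\delta}}$ after invoking the radius-$R$ equivalent norm of Remark \ref{R_ball_remark}. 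The high-frequency term is literally (a piece of) this equivalent norm. For the moreover clause I would estimate $\int_{B(0,R)\sc} \abs{\hat f}$ by Cauchy--Schwarz against $\ang{\xi}^{-s}$, using that $\ang{\xi}^{-2s}$ is integrable on $B(0,R)\sc$ exactly when $s > d/2$, and add this to the low-frequency bound.

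For part (2), the identity $f = f_{l,R} + f_{h,R}$ is immediate from $\chi_{B(0,R)} + \chi_{B(0,R)\sc} = 1$. The monotonicity bounds follow from $\abs{\widehat{f_{l,R}}}^2 = \abs{\hat f}^2 \chi_{B(0,R)} \le \abs{\hat f}^2$ together with $\omega_{s,r,\delta} \ge 0$, and symmetrically for $f_{h,R}$. To get $f_{l,R} \in C^k_b$ I would use that $\widehat{f_{l,R}}$ is compactly supported and, by part (1), lies in $L^1$; differentiating under Fourier inversion gives $\partial^\alpha f_{l,R} = \pr{(2\pi i \xi)^\alpha \widehat{f_{l,R}}}^\vee$, whence $\norm{\partial^\alpha f_{l,R}}_{L^\infty} \le (2\pi R)^{\abs{\alpha}} \norm{\widehat{f_{l,R}}}_{L^1}$, and part (1) applied to $f_{l,R}$ converts the right side into a multiple of $\norm{f_{l,R}}_{X^s_{r,\delta}}$. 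For $f_{h,R} \in H^s$ I would again pass to the radius-$R$ norm: since $\widehat{f_{h,R}}$ vanishes on $B(0,R)$, its low-frequency contribution disappears and the remaining term is exactly $\int_{B(0,R)\sc} \ang{\xi}^{2s} \abs{\widehat{f_{h,R}}}^2 = \norm{f_{h,R}}_{H^s}^2$.

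Part (3) assembles these: taking $R = 1$, part (2) gives $f_{h,R} \in H^s$, while $f_{l,R} \in C^\infty_0$ because each derivative $\partial^\alpha f_{l,R}$ is the inverse transform of an $L^1$ function and hence vanishes at infinity by Riemann--Lebesgue. Continuity of the inclusion follows because $f \mapsto (f_{l,R}, f_{h,R})$ is linear and, by parts (1)--(2), controls every Fr\'echet seminorm $\norm{\cdot}_{C^k_b}$ of the first factor and the $H^s$ norm of the second factor by $\norm{f}_{X^s_{r,\delta}}$; composing with the sum map into $C^\infty_0 + H^s$ then gives a continuous inclusion.

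The only genuinely delicate ingredient is the low-frequency $L^1$ bound in part (1), and this is precisely what Lemma \ref{low_freq_est} was set up to provide; once the weighted Cauchy--Schwarz split is written down the remaining work is bookkeeping. The one recurring care-point is to apply the radius-$R$ equivalent norm of Remark \ref{R_ball_remark} consistently, so that the supports of the two projections annihilate the ``wrong'' frequency piece cleanly.
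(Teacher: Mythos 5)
Your proposal is correct and follows essentially the same route as the paper: the weighted Cauchy--Schwarz split against $\abs{\xi}^{2r}/(\xi_1^2+\abs{\xi}^{2\delta})$ via Lemma \ref{low_freq_est} and Remark \ref{R_ball_remark} for item (1), and the $\ang{\xi}^{-s}$ pairing for the case $s>d/2$, are exactly the paper's argument. The paper dispatches items (2) and (3) by citing standard facts about band-limited functions with $L^1$ Fourier transforms; your write-up simply supplies those routine details (differentiation under Fourier inversion, Riemann--Lebesgue, and the support annihilation in the radius-$R$ norm), all correctly.
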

	\begin{proof}
		We begin with the proof of the first item.  Clearly, the second term on the left side of \eqref{fourier_split_estimates_0} is bounded by the term on the right, so we only need to consider the first.  We estimate the first term using Cauchy-Schwarz, Lemma \ref{low_freq_est}, and Remark \ref{R_ball_remark}:
		\begin{equation}
			\int_{B(0, R)} \abs*{\hat f(\xi)} \;\d\xi  
			\le \pr{\int_{B(0, R)} \fr{\abs \xi^{2r}}{{ \xi_1^2 + \abs \xi^{2\delta} }} \d\xi}^{1/2} \pr{ \int_{B(0, R)}  \fr{\xi_1^2 + \abs \xi^{2\delta}}{ \abs \xi^{2r} } \abs*{ \hat f(\xi) }^2\d\xi}^{1/2} \leq C \norm*{ f }_{X^s_{r, \delta}}.
		\end{equation}
		Additionally, if $s > d/2$, we can estimate
		\begin{equation}
			\int_{B(0, R)\sc} \abs*{\hat f(\xi)} \; \d\xi \le \pr{ \int_{B(0, R)\sc}  \fr{1}{\pr{ 1 + \abs \xi^2 }^s} \d\xi }^{1/2} \pr{ \int_{B(0, R)\sc}  {\pr{ 1 + \abs \xi^2 }^s} \abs*{ \hat f(\xi) }^2\d\xi }^{1/2}  \leq C\norm*{f}_{X^s_{r, \delta}}.
		\end{equation}
		Combining this with \eqref{fourier_split_estimates_0} gives the second inequality and completes the proof of the first item. The second and third items follows easily from the first and the standard properties of band-limited functions whose Fourier transforms are in $L^1$.
	\end{proof}

	With this lemma in hand, we can now characterize when $X^s_{r, \delta}(\R^d;\F)$ is complete.

	\begin{theorem}\label{thm_complete_char}
		$X^s_{r, \delta}(\R^d;\F)$ is a Hilbert space if and only if $1 + \delta - 2r < d$.
	\end{theorem}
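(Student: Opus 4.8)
The plan is to read the norm as a weighted $L^2$ norm on the Fourier side and to reduce completeness to a surjectivity question that is governed precisely by Lemma \ref{low_freq_est}. Write $\mu = \omega_{s,r,\delta}\,\d\xi$ and let $L^2(\mu)$ denote the (automatically complete) Hilbert space of $\C$-valued square-$\mu$-integrable functions; when $\F = \R$ I instead take the closed real subspace of Hermitian-symmetric elements, which is again complete since $\omega_{s,r,\delta}$ is even. By construction the Fourier transform $f \mapsto \hat f$ is an isometry of $X^s_{r,\delta}(\R^d;\F)$ into $L^2(\mu)$, and it is injective because $\omega_{s,r,\delta} > 0$ for $\xi \ne 0$ forces $\hat f = 0$ a.e. whenever $\norm{f}_{X^s_{r,\delta}} = 0$. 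Thus completeness of $X^s_{r,\delta}$ is equivalent to the image being all of $L^2(\mu)$, i.e.\ to surjectivity of $f \mapsto \hat f$, and the only obstruction to surjectivity is the defining membership requirement $\hat f \in L^1_\loc$.

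For the forward implication I assume $1 + \delta - 2r < d$ and fix $g \in L^2(\mu)$, aiming to produce $f \in X^s_{r,\delta}$ with $\hat f = g$; it suffices to verify $g \in L^1_\loc(\R^d)$ and that $g$ is tempered. On $B(0,R)$ this is exactly the computation already carried out in Proposition \ref{thm_fourier_split_estimates}(1): Cauchy--Schwarz together with the finiteness of $\int_{B(0,R)} \abs{\xi}^{2r}(\xi_1^2 + \abs{\xi}^{2\delta})^{-1}\,\d\xi$ from Lemma \ref{low_freq_est} bounds $\int_{B(0,R)} \abs{g}$ by $\norm*{g}_{L^2(\mu)}$. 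On $B(0,R)\sc$ the weight is $\ang{\xi}^{2s}$, so for any ball $B(0,M)$ a further Cauchy--Schwarz application gives $\int_{B(0,M)\sm B(0,R)} \abs{g} \le (\int_{B(0,M)} \ang{\xi}^{-2s})^{1/2}\,\norm*{g}_{L^2(\mu)} < \infty$, and the same estimate with an extra weight $\ang{\xi}^{-2N}$ shows $g$ has the integrated polynomial decay needed to define an element of $\S'(\R^d;\F)$. Hence $f := g^\vee$ satisfies $\hat f = g \in L^1_\loc$ and $\norm{f}_{X^s_{r,\delta}} = \norm*{g}_{L^2(\mu)} < \infty$, so $f \in X^s_{r,\delta}$ and the isometry is onto; as $L^2(\mu)$ is complete, so is $X^s_{r,\delta}$.

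For the converse I argue the contrapositive: if $d \le 1 + \delta - 2r$, then $X^s_{r,\delta}$ is not complete. By Lemma \ref{low_freq_est} we now have $\int_{B(0,R)} \omega_{s,r,\delta}^{-1}\,\d\xi = \infty$, so the Cauchy--Schwarz embedding of $L^2(\mu)$ into $L^1$ over $B(0,R)$ is not reversible; concretely,
\begin{equation}
    \sup_{\norm{g}_{L^2(\mu)} \le 1,\ \supp g \subset B(0,R)} \int_{B(0,R)} \abs{g}\,\d\xi = \norm*{\omega_{s,r,\delta}^{-1/2}}_{L^2(B(0,R))} = \pr{\int_{B(0,R)} \omega_{s,r,\delta}^{-1}\,\d\xi}^{1/2} = \infty,
\end{equation}
and a standard lacunary-sum construction produces a single $g \in L^2(\mu)$, supported in $B(0,R)$, with $\int_{B(0,R)} \abs{g} = \infty$, so that $g \notin L^1_\loc$. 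Set $g_n = g\,\chi_{\{\abs{\xi} > 1/n\}}$ and $f_n = g_n^\vee$. Each $g_n$ vanishes near the origin, is supported in the annulus $\{1/n \le \abs{\xi} \le R\}$ on which $\omega_{s,r,\delta}$ is bounded below by a positive constant, and is dominated by $\abs{g}$, so $g_n \in L^2(\mu) \cap L^1_\loc$ and hence $f_n \in X^s_{r,\delta}$. Since $\int_{\R^d} \omega_{s,r,\delta}\abs{g}^2 < \infty$, the tails $\int_{\{\abs{\xi} \le 1/n\}} \omega_{s,r,\delta}\abs{g}^2 \to 0$, which makes $\set{f_n}$ Cauchy in $X^s_{r,\delta}$ and forces $g_n \to g$ in $L^2(\mu)$.

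If $X^s_{r,\delta}$ were complete, then $f_n \to f$ for some $f \in X^s_{r,\delta}$, whence $\hat{f_n} = g_n \to \hat f$ in $L^2(\mu)$; combined with $g_n \to g$ this yields $\hat f = g$ $\mu$-a.e., hence Lebesgue-a.e. since $\omega_{s,r,\delta} > 0$ a.e. But $f \in X^s_{r,\delta}$ requires $\hat f \in L^1_\loc$, contradicting $g \notin L^1_\loc$; therefore $X^s_{r,\delta}$ is not complete, and the dichotomy is established. I expect the main obstacle to be this converse direction --- specifically, manufacturing a single non-integrable $g \in L^2(\mu)$ out of the divergence of $\int_{B(0,R)} \omega_{s,r,\delta}^{-1}$ (rather than merely a sequence of near-extremizers) and confirming that the truncations $g_n$ genuinely land in $X^s_{r,\delta}$. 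The forward direction is essentially bookkeeping built on top of Proposition \ref{thm_fourier_split_estimates}.
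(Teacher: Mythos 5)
Your proof is correct, and while your forward direction is essentially the paper's argument in different clothing (the paper runs a Cauchy sequence through the Fourier transform and checks the $L^2_\mu$-limit lies in $\S' \cap L^1_\loc$ via Cauchy--Schwarz and Lemma \ref{low_freq_est}; you phrase the identical computation as surjectivity of the Fourier isometry onto $L^2(\mu)$), your converse takes a genuinely different route. The paper assumes completeness, introduces the auxiliary norm $\norm{f}_\ast = \norm{\hat f \vchi_{B(0,1)}}_{L^1} + \norm{f}_{X^s_{r,\delta}}$, invokes the open mapping theorem to extract the bound $\norm{\hat f \vchi_{B(0,1)}}_{L^1} \lesssim \norm{f}_{X^s_{r,\delta}}$, and defeats it by scaling anisotropic boxes $R_\ep = [\ep^\delta/2, 3\ep^\delta/2]\times[\ep/2,3\ep/2]^{d-1}$, comparing $\ep^{d-1+\delta}$ against $\ep^{(3\delta+d-2r-1)/2}$; you instead exhibit a concrete Cauchy sequence with no limit. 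The obstacle you flag is genuinely dischargeable by the Abel--Dini trick: take dyadic annuli $A_j = \{2^{-j-1} < \abs{\xi} \le 2^{-j}\}$, note $a_j = \int_{A_j} \omega_{s,r,\delta}^{-1} < \infty$ since the integrand is continuous away from the origin while $s_j = \sum_{i \le j} a_i \to \infty$, and set $g = \sum_j s_j^{-1} \omega_{s,r,\delta}^{-1} \vchi_{A_j}$, so that $\int \omega_{s,r,\delta} \abs{g}^2 = \sum_j a_j/s_j^2 < \infty$ while $\int \abs{g} = \sum_j a_j/s_j = \infty$; since $g$ may be taken even and nonnegative, the case $\F = \R$ is covered as well. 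Each approach buys something: the paper's is shorter given the open mapping theorem, whereas yours is more self-contained and, notably, sharper at the borderline $d = 1+\delta-2r$, where the paper's single-scale box test yields matching powers of $\ep$ and hence no contradiction (the divergence there is only logarithmic), while your argument needs only $\int_{B(0,R)} \omega_{s,r,\delta}^{-1} = \infty$, which does hold at the borderline. Two small points of hygiene: cite the \emph{proof} of Lemma \ref{low_freq_est} for that divergence, since the lemma's statement only asserts finiteness when $d > 1+\delta-2r$ (the ``if and only if'' appears in its computations but is not recorded in the statement); and your opening claim that completeness is \emph{equivalent} to surjectivity of $f \mapsto \hat f$ is a priori an over-claim --- completeness is equivalent to the image being closed --- though it is repaired by observing that bounded functions supported in compact sets avoiding the origin lie in the image and are dense in $L^2(\mu)$, and in any case your two directions use only the unproblematic implications.
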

	\begin{proof}
		First suppose that $r > \fr{1 + \delta - d}2$ and that $\{f_k\}_{k \in \N}$ is Cauchy in $X^s_{r, \delta}$.   Write $\mu$ for the measure $\omega_{s, r, \delta}(\xi)\d \xi$ on $\R^d$. Then $\{\hat f_k\}_{k \in \N}$ is Cauchy in $L^2_\mu(\R^d; \F)$, and so there exists some $F \in L^2_\mu(\R^d; \F)$ such that $\hat f_k \to F$ in $L^2_\mu(\R^d; \F)$ as $k \to \infty$.   We now aim to verify that $F \in \scrS' \cap L^1_{\loc}$.   Employing Cauchy-Schwarz and Lemma \ref{low_freq_est}, we have that
		\begin{equation}
			\int_{B(0, 1)} \abs{F} \le \pr{\int_{B(0, 1)} \fr{1}{\omega_{s,r,\delta}}}^{1/2} \pr{\int_{B(0, 1)} \abs*{F}^2{\omega_{s,r,\delta}}}^{1/2} \lesssim \norm*{F}_{L^2_\mu}  < \infty,
		\end{equation}
		or in other words $F \vchi_{B(0, R)} \in L^1$.  Since $\omega_{s,r,\delta}(\xi) = \ang{\xi}^{2s}$ for $\abs{\xi} \ge 1$, it's also clear that $F \vchi_{B(0, R)\sc} \in  \scrS' \cap L^1_\loc$.  Hence, $F = F \vchi_{B(0, R)} + F \vchi_{B(0, R)\sc} \in \in \scrS' \cap L^1_{\loc}$, and so we may define  $f = \check F \in \S'(\R^d;\F)$.  It's then clear that $f \in X^s_{r, \delta}(\R^d;\F)$ and $f_k \to f$ in $X^s_{r, \delta}$, which shows that $X^s_{r, \delta}(\R^d;\F)$ is complete.

		Conversely, suppose $X^s_{r, \delta}(\R^d;\F)$ is complete when $\delta \ge 1$. Consider the norm $\norm{\cdot}_\ast: X^s_{r, \delta}(\R^d; \F) \to [0, \infty)$ defined by $\norm{f}_\ast = \norm{\hat f\vchi_{B(0, 1)}}_{L^1} + \norm{f}_{X^s_{r, \delta}}$, which is well-defined thanks to the fact that $\hat f \in L^1_\loc$ for any $f \in X^s_{r, \delta}$.  Note that $(X^s_{r, \delta}(\R^d; \F); \norm{\cdot}_\ast)$ is also complete.  Then the identity $I:  (X^s_{r, \delta}(\R^d; \F); \norm{\cdot}_\ast)\to (X^s_{r, \delta}(\R^d; \F); \norm{\cdot}_{X^s_{r, \delta}})$ is obviously a continuous surjective linear map, so by the open mapping theorem $\norm{f}_* \lesssim\norm{f}_{X^s_{r, \delta}}$. In particular, we have that $\norm{\hat f\vchi_{B(0, 1)}}_{L^1} \lesssim \norm{f}_{X^s_{r, \delta}}$. 
		
		Now let $0 < \ep < \fr12$ and define the rectangle $R_\ep = [{\ep^\delta}/2, {3\ep^\delta}/{2}] \times \br{\ep/2, {3\ep}/2}^{d - 1}$. Let $F = \vchi_{R _\ep \cup -R_{\ep}}$. Then for $\xi \in R_\ep \cup -R_{\ep}$ we have $\abs*{\xi_1} \asymp \ep^\delta$ and $ \abs\xi \asymp \ep$, and so 
		\begin{equation}
			\omega_{s, r, \delta}(\xi) \asymp \frac{\abs {\xi_1}^2 + \abs \xi^{2\delta}}{\abs \xi^{2r}} \asymp \frac{\ep^{2\delta} + \ep^{2\delta}}{\ep^{2r}} \asymp \ep^{2(\delta - r)} .    
		\end{equation}
		Simple computations show that
		\begin{equation}
			\norm{F}_{L^1} = 2\abs{R_\ep} \asymp \ep^{d - 1 + \delta} \text{ and }\norm*{\check F}_{X^s_{r, \delta}} \asymp \sqrt{\abs{R_\ep} \ep^{2(\delta - r)} } \asymp \ep^{\pr{3\delta+ d - 2r - 1}/{2}}.
		\end{equation}
		Combining the preceding analysis, we have $\ep^{d - 1 + \delta} \asymp \norm{F}_{L^1} \lesssim \norm{\check F}_{X^s_{r, \delta}} \asymp \ep^{\pr{3\delta+ d - 2r - 1}/{2}}$. Sending $\ep \to 0$, this implies $\pr{3\delta+ d - 2r - 1}/{2} < d - 1 + \delta$ giving us the bound $d > 1 + \delta - 2r$ as desired. 
	\end{proof}
	We can also characterize exactly when the Schwartz functions $\S$ and the classical Sobolev space $H^s$ embeds into $X^s_{r, \delta}$. We first prove a useful lemma.
	
	\begin{lemma}\label{schwartz_bound}
		Let  $r, \delta \in \R$ with $\delta \ge 1$. Then
		\begin{equation}
			J = \int_{B(0, 1)} \frac{\abs {\xi_1}^2 + \abs \xi^{2\delta}}{\abs \xi^{2r}} \; \d \xi < \infty
		\end{equation}
		if and only if $r < \frac{2 + d}{2}$, where $B(0, 1)$ is the unit ball in $\R^d$.
	\end{lemma}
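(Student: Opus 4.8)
The plan is to reduce both directions to the convergence of a single radial integral, exploiting that integration over $B(0,1)$ confines us to $\abs{\xi} \le 1$, where the two pieces of the numerator are easy to compare against $\abs{\xi}^2$. The key observation is that since $\delta \ge 1$ and $\abs{\xi} \le 1$ we have $\abs{\xi}^{2\delta} \le \abs{\xi}^2$, while always $\xi_1^2 \le \abs{\xi}^2$; hence the numerator satisfies $\xi_1^2 + \abs{\xi}^{2\delta} \le 2\abs{\xi}^2$ on the unit ball. On the other hand it is bounded below by $\xi_1^2$, which, after averaging over spheres, is a fixed positive multiple of $\abs{\xi}^2$. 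Thus $J$ should behave exactly like $\int_{B(0,1)} \abs{\xi}^{2-2r}\,\d\xi$, whose convergence threshold is $r < \frac{2+d}{2}$, notably independent of $\delta$.

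For the sufficiency direction ($r < \frac{2+d}{2} \implies J < \infty$), I would first invoke the bound $\xi_1^2 + \abs{\xi}^{2\delta} \le 2\abs{\xi}^2$ to obtain $J \le 2\int_{B(0,1)} \abs{\xi}^{2-2r}\,\d\xi$. Passing to polar coordinates, the right-hand side equals a dimensional constant times $\int_0^1 \rho^{d+1-2r}\,\d\rho$, which is finite precisely when $d+1-2r > -1$, that is, $r < \frac{2+d}{2}$.

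For the necessity direction ($r \ge \frac{2+d}{2} \implies J = \infty$), I would discard the nonnegative term $\abs{\xi}^{2\delta}$ and estimate $J \ge \int_{B(0,1)} \frac{\xi_1^2}{\abs{\xi}^{2r}}\,\d\xi$. Using the invariance of $B(0,1)$ under permutations of the coordinate axes, $\int_{B(0,1)} \frac{\xi_j^2}{\abs{\xi}^{2r}}\,\d\xi$ is the same for every $j$, and summing over $j$ gives $\int_{B(0,1)} \frac{\xi_1^2}{\abs{\xi}^{2r}}\,\d\xi = \frac1d \int_{B(0,1)} \frac{\abs{\xi}^2}{\abs{\xi}^{2r}}\,\d\xi = \frac1d \int_{B(0,1)} \abs{\xi}^{2-2r}\,\d\xi$. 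This reduces to the very same radial integral $\int_0^1 \rho^{d+1-2r}\,\d\rho$, which diverges exactly when $d+1-2r \le -1$, i.e. $r \ge \frac{2+d}{2}$.

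The only mild subtlety — and it is really the crux of the argument — lies in the lower bound: the integrand is \emph{not} pointwise comparable to $\abs{\xi}^{2-2r}$, since the term $\xi_1^2$ vanishes on the hyperplane $\{\xi_1 = 0\}$. I resolve this not by a pointwise comparison but by averaging over spheres through the coordinate-permutation symmetry, which effectively replaces $\xi_1^2$ by the constant fraction $\frac1d \abs{\xi}^2$ and delivers the sharp threshold. Everything else is an elementary polar-coordinate computation, and the fact that the criterion is independent of $\delta$ serves as a consistency check, reflecting that the $\abs{\xi}^{2\delta}$ term is dominated by $\abs{\xi}^2$ on $B(0,1)$ for every $\delta \ge 1$.
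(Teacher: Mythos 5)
Your proof is correct and takes essentially the same approach as the paper: both arguments hinge on the coordinate-permutation symmetry that replaces $\xi_1^2$ by $\frac{1}{d}\abs{\xi}^2$ after averaging, reducing everything to the radial integral $\int_0^1 \rho^{d+1-2r}\,\d\rho$. The only cosmetic difference is that the paper computes the exact finiteness threshold $2(\delta-r) > -d$ for the $\abs{\xi}^{2\delta}$ term and notes it is less restrictive when $\delta \ge 1$, whereas you dominate that term by $\abs{\xi}^2$ on the unit ball directly.
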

	\begin{proof}
		We write $J_1 = \int_{B(0, 1)} \frac{\abs {\xi_1}^2}{\abs \xi^{2r}}\; \d\xi$ and $J_2 = \int_{B(0, 1)} \frac{\abs {\xi}^{2\delta}}{\abs \xi^{2r}}\; \d\xi$, so we have that $J = J_1 + J_2$. We know that $J_2$ is finite if and only if $2(\delta - r) > -d$, so it only remains to analyze $J_1$. We first note by symmetry for any $1 \le k \le d$ we have that
		\begin{equation}
			J_1 = \int_{B(0, 1)} \frac{\abs {\xi_k}^2}{\abs \xi^{2r}}\; \d\xi,
			\text{ and hence } 
			dJ = \sum_{k = 1}^d \int_{B(0, 1)} \frac{\abs {\xi_k}^2}{\abs \xi^2r}\; \d\xi = \int_{B(0, 1)} \frac{\abs {\xi}^2}{\abs \xi^{2r}}\; \d\xi.
		\end{equation}
		The latter is finite if and only if $2 - 2r > - d$. Since $\delta \ge 1$, this is the more restrictive condition, and so we have that $J < \infty$ if and only if $r < \frac{2 + d}{2}$.
	\end{proof}
	
	We now characterize investigate how the Schwartz functions relate to our spaces.
	
	\begin{theorem}\label{thm_schwartz_inclusion}
		For all $s, r \in \R$ and $\delta \ge 1$, we have $\S(\R^d; \F) \cap X^s_{r, \delta}(\R^d; \F)$ is dense in $X^s_{r, \delta}(\R^d; \F)$. Furthermore, we have that $\S(\R^d; \F) \sseq X^s_{r, \delta}(\R^d; \F)$ if and only if $r < \frac {2 + \delta}2$.
	\end{theorem}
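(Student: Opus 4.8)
The plan is to establish the density statement and the inclusion criterion separately; both ultimately hinge on the low-frequency weight integral controlled by Lemma~\ref{schwartz_bound}. For density I would argue on the Fourier side by truncation followed by mollification. Writing $\d\mu = \omega_{s,r,\delta}(\xi)\,\d\xi$, every $f \in X^s_{r,\delta}(\R^d;\F)$ has $\hat f \in L^2_\mu \cap L^1_{\loc}$. I would first excise the origin and infinity, setting $\hat f_n = \hat f\,\vchi_{A_n}$ with $A_n = \set{1/n \le \abs{\xi} \le n}$; since $\hat f \in L^2_\mu$ and $\vchi_{A_n} \to 1$ pointwise off the origin (a $\mu$-null set), dominated convergence gives $f_n \to f$ in $X^s_{r,\delta}$. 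On each fixed annulus $A_n$ the weight $\omega_{s,r,\delta}$ is bounded above and below by positive constants, so the $L^2_\mu$ and $L^2$ norms are comparable there, and I would mollify $\hat f_n$ to obtain $g_n \in C^\infty_c(\R^d \sm \set{0})$ with $g_n \to \hat f_n$ in $L^2$ on a slightly enlarged annulus, hence in $L^2_\mu$. Because $g_n$ is smooth and compactly supported, $\check g_n \in \S(\R^d;\F) \cap X^s_{r,\delta}(\R^d;\F)$, and a diagonal choice of mollification parameter produces Schwartz functions converging to $f$ in $X^s_{r,\delta}$, proving density for all parameters.

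For the inclusion criterion I would split the norm at $\abs{\xi} = 1$. If $f \in \S(\R^d;\F)$ then $\hat f \in \S$, so $\abs{\hat f(\xi)} \ls_N \ang{\xi}^{-N}$ for every $N$ and the high-frequency contribution $\int_{B(0,1)\sc} \ang{\xi}^{2s} \abs{\hat f}^2$ is finite unconditionally; the whole question thus collapses to the low-frequency integral $\int_{B(0,1)} \frac{\xi_1^2 + \abs{\xi}^{2\delta}}{\abs{\xi}^{2r}} \abs{\hat f(\xi)}^2\,\d\xi$. For sufficiency I would use that $\hat f$ is bounded near the origin, so this integral is at most $\norm{\hat f}_{L^\infty}^2\, J$, where $J$ is exactly the weight integral of Lemma~\ref{schwartz_bound}; as $J < \infty$ in the asserted range $r < \frac{2+\delta}{2}$, every Schwartz function then lies in $X^s_{r,\delta}$, giving $\S(\R^d;\F) \sseq X^s_{r,\delta}(\R^d;\F)$.

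For necessity I would produce one Schwartz function whose norm diverges as soon as $r$ leaves that range. Choosing $f$ with $\hat f$ a Gaussian, so that $\hat f(\xi) \ge c > 0$ on $B(0,1/2)$, the low-frequency integral is bounded below by $c^2 \int_{B(0,1/2)} \frac{\xi_1^2 + \abs{\xi}^{2\delta}}{\abs{\xi}^{2r}}\,\d\xi$, whose divergence at the origin is governed by the same criterion as the weight integral of Lemma~\ref{schwartz_bound}; hence $f \notin X^s_{r,\delta}$ once $r \ge \frac{2+\delta}{2}$, so the inclusion fails in the complementary range and the characterization is complete. I expect the main obstacle to be the density step rather than the criterion: since $\omega_{s,r,\delta}$ is singular at the origin one cannot mollify $\hat f$ directly, and it is precisely the annular truncation that isolates this singularity and lets the mollification converge in $L^2_\mu$. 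Once Lemma~\ref{schwartz_bound} is invoked, the sufficiency and necessity arguments are routine.
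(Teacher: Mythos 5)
Your proposal is correct and follows essentially the same route as the paper: annular truncation of $\hat f$ away from the origin and infinity, followed by mollification on a region where $\omega_{s,r,\delta} \asymp 1$, handles density, while the inclusion criterion reduces to Lemma \ref{schwartz_bound} via boundedness of $\hat f$ near the origin for sufficiency and a Schwartz function with $\hat f \ge c > 0$ near the origin (the paper uses a cutoff equal to $1$ on $B(0,1)$ where you use a Gaussian) for necessity. Two small remarks: Lemma \ref{schwartz_bound} actually gives the threshold $r < \frac{2+d}{2}$, not $r < \frac{2+\delta}{2}$ — the $\delta$ in the theorem statement is a typo for $d$, which your reduction silently inherits when you assert that $J < \infty$ in the stated range — and when $\F = \R$ you should mollify with a radial real kernel so that $\overline{g_n(\xi)} = g_n(-\xi)$, guaranteeing that $\check g_n$ is real-valued, a point the paper addresses explicitly.
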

	\begin{proof}
		Let $f \in X^s_{r, \delta}(\R^d)$ and $\ep > 0$. For $0 < R_1 < R_2 < \infty$, define the annulus $A(R_1, R_2) = B(0, R_2) \sm B[0, R_1]$. By the monotone convergence theorem, we may find $0 < R_1 < R_2 < \infty$ such that
		\begin{equation}
			\int_{A(R_1, R_2)\sc} \omega_{s, r, \delta}(\xi) \abs*{\hat f(\xi)}^2\; \d \xi < \frac{\ep^2}{4}. 
		\end{equation}
		Pick nonnegative and radially symmetric $\ph \in C_c^\infty(\R^d)$ with $\supp(\ph) \sseq B(0, 1)$ and $\int_{\R^d} \ph = 1$. Then for $0 < \eta < \frac R4$, define the function $F_\eta \in C_c^\infty(\R^d)$ by
		\begin{equation}
			F_\eta(\xi) = \int_{A(R_1, R_2)} \frac{1}{\eta^d} \ph\pr{ \frac{\xi - z}{\eta} } \hat f(z) \; \d z.
		\end{equation}
		An elementary computation shows that $\supp(F_\eta) \sseq A(R_1/2, R_1 + R_2)$ and $\overline{F_\eta(\xi)} = F_\eta(-\xi)$ which then implies $\check F_\eta \in \S(\R^d)$ is real-valued by the same argument as in \cite{leoni_and_tice}. On the annulus $A(R_1/2, R_1 + R_2)$, we know that $\omega_{s, r, \delta}(\xi) \asymp 1$, and so the usual theory of mollification (CITE) supplies us with some $0 < \eta_0 < R_1/2$ such that 
		\begin{equation}
			\int_{A(R_1, R_2)} \omega_{s, r, \delta}(\xi) \abs*{ \hat f(\xi) - F_{\eta_0}(\xi) }^2 \; \d\xi + \int_{A(R_1/2, R_1 + R_2) \sm A(R_1, R_2)} \omega_{s, r, \delta}(\xi) \abs*{F_{\eta_0}(\xi)}^2\; \d\xi < \frac{\ep^2}{8}	
		\end{equation}
		Thus if we define $f_{\eta_0} = \check F_{\eta_0}$, then $f_{\eta_0} \in X^s_{r, \delta}(\R^d) \cap \S(\R^d)$ and $\supp\pr{ \hat f_{\eta_0} } \sseq A(R_1/2, R_1 + R_2)$. Putting everything together, we see
		\begin{multline}
			\norm*{f - f_{\eta_0}}_{X^s_{r, \delta}}^2 = \int_{A(R_1/2, R_1 + R_2)\sc} \omega_{s, r, \delta}(\xi) \abs*{\hat f(\xi)}^2 \; \d\xi + \int_{A(R_1/2, R_1 + R_2)} \omega_{s, r, \delta}(\xi) \abs*{\hat f(\xi) - F_{\eta_0}(\xi)}^2\; \d\xi \\
			< \frac{\ep^2}4 + \int_{A(R_1, R_2)}\omega_{s, r, \delta}(\xi) \abs*{ \hat f(\xi) - F_{\eta_0}(\xi) }^2 \; \d\xi + \int_{A(R_1/2, R_1 + R_2) \sm A(R_1, R_2)} \omega_{s, r, \delta}(\xi) \abs*{\hat f(\xi) - F_{\eta_0}(\xi)}^2\; \d\xi \\
			< \frac{\ep^2}{4} + \int_{A(R_1, R_2)}\omega_{s, r, \delta}(\xi) \abs*{ \hat f(\xi) - F_{\eta_0}(\xi) }^2 \; \d\xi + 2\int_{A(R_1/2, R_1 + R_2) \sm A(R_1, R_2)} \omega_{s, r, \delta}(\xi) \abs*{ F_{\eta_0}(\xi)}^2\; \d\xi\\ + 2\int_{A(R_1, R_2)\sc} \omega_{s, r, \delta}(\xi) \abs*{\hat f(\xi)}^2 \; \d\xi  < \frac{\ep^2}{4} + 2\frac{\ep^2}{8} + 2\frac{\ep^2}{4} < \ep^2.
		\end{multline}
		Since $\ep$ was arbitrary, we have shown that $\S(\R^d; \F) \cap X^s_{r, \delta}(\R^d; \F)$ is dense in $X^s_{r,\delta}(\R^d; \F)$.
		
		For the second assertion, first suppose that $\S(\R^d; \F) \sseq X^s_{r, \delta}(\R^d; \F)$. Then pick some radial $\ph \in \S(\R^d;\R)$ such that $\ph = 1$ on $B(0, 1)$ and $\ph \ge 0$ everywhere. Then $\check \ph \in X^s_{r, \delta}(\R^d; \F)$, so we see that
		\begin{equation}
			\infty > \norm*{\check \ph}_{X^s_{r, \delta}}^2 \ge  \int_{B(0, 1)} \frac{\abs {\xi_1}^2 + \abs \xi^{2\delta}}{\abs \xi^{2r}} \; \d\xi.
		\end{equation}
		By Lemma \ref{schwartz_bound}, we must have $r < \frac{2 + d}2$.\\
		Conversely, suppose that $r < \frac{2 + d}{2}$. Let $\ph \in \S(\R^d; \F)$ and let $\ph_0 = \pr{\hat \ph \vchi_{B(0, 1)}}\;\check{}$\; and $\ph_1 = \pr{\hat \ph \vchi_{B(0, 1)\sc}}\;\check{}$. We then see 
		\begin{align}
			\norm*{\ph_0}_{X^s_{r, \delta}}^2 = \int_{B(0, 1)} \abs*{\hat \ph(\xi)}^2\frac{\abs {\xi_1}^2 + \abs \xi^{2\delta}}{\abs \xi^{2r}} \; \d\xi  < \infty
		\end{align}
		by Lemma \ref{schwartz_bound}, and clearly since $\ph \in \S$ we have
		\begin{align}
			\norm*{\ph_1}_{X^s_{r, \delta}}^2 = \int_{B(0, 1)\sc} \abs*{\hat \ph(\xi)}^2\abs \xi^{2s} \; \d\xi < \infty.
		\end{align}
		We know that $\hat \ph \in \S \sseq L^1_\loc$, thus $\ph \in X^s_{r, \delta}$ as desired.
	\end{proof}
	
	Next we investigate when  standard Sobolev spaces embed into the anisotropic ones.

	\begin{theorem}\label{thm_Hs_inclusion}
		$H^s(\R^d;\F) \hookrightarrow X^s_{r,\delta}(\R^d;\F)$ if and only if $r \le 1$.
	\end{theorem}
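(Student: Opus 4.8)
The plan is to reduce everything to a low-frequency comparison, since for $\abs{\xi} \ge 1$ the weight $\omega_{s,r,\delta}$ coincides with $\ang{\xi}^{2s}$ and the high-frequency parts of the two norms are literally equal. On $B(0,1)$ we have $\ang{\xi}^{2s} \asymp 1$, so as $f$ ranges over $H^s(\R^d;\F)$ the restriction $\hat f \restriction_{B(0,1)}$ ranges over all of $L^2(B(0,1))$ with comparable norm. Consequently, the continuous inclusion $H^s \hookrightarrow X^s_{r,\delta}$ is equivalent to the weighted bound $\int_{B(0,1)} w \abs{g}^2 \ls \int_{B(0,1)} \abs{g}^2$ holding for every $g \in L^2(B(0,1))$, where $w(\xi) = (\xi_1^2 + \abs{\xi}^{2\delta})/\abs{\xi}^{2r}$; and this in turn holds if and only if $w \in L^\infty(B(0,1))$. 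Thus the whole theorem reduces to determining when $w$ is essentially bounded near the origin.

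For the sufficiency direction ($r \le 1$), I would bound the numerator crudely: since $\xi_1^2 \le \abs{\xi}^2$ and, because $\delta > 1$ and $\abs{\xi} \le 1$, also $\abs{\xi}^{2\delta} \le \abs{\xi}^2$, we get $w(\xi) \le 2\abs{\xi}^{2(1-r)} \le 2$ on $B(0,1)$. Feeding this into the low-frequency integral and using $\ang{\xi}^{2s} \asymp 1$ on $B(0,1)$ yields $\norm{f}_{X^s_{r,\delta}}^2 \ls \norm{f}_{H^s}^2$ after adding back the identical high-frequency contributions; the membership $\hat f \in L^1_\loc$ needed to place $f$ in $X^s_{r,\delta}$ follows from $\hat f \in L^2_\loc$ and Cauchy--Schwarz.

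The necessity direction ($r > 1$) is where the anisotropy must be exploited, and this is the step I expect to be the crux. The weight $w$ is not radial: its blow-up as $\xi \to 0$ is fastest in the directions with $\abs{\xi_1} \asymp \abs{\xi}$, where $w \gtrsim \abs{\xi}^{2(1-r)}$, whereas it is only of size $\abs{\xi}^{2(\delta-r)}$ along $\{\xi_1 = 0\}$. A radial test function could therefore have finite $X^s_{r,\delta}$-norm (cf. Lemma \ref{schwartz_bound}), so I would instead concentrate mass in the cone $\Gamma = \{\xi : \abs{\xi_1} \ge \abs{\xi}/2\}$, which has positive solid angle since $d \ge 2$, and on which $w(\xi) \ge \tfrac14 \abs{\xi}^{2(1-r)}$. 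Taking $\hat f = \abs{\xi}^{-\beta}\vchi_{\Gamma \cap B(0,1)}$ and passing to spherical coordinates, the $L^2(B(0,1))$ condition (which places $f$ in $H^s$, as $\hat f$ is compactly supported) becomes $\beta < d/2$, while divergence of $\int_{B(0,1)} w\abs{\hat f}^2$ becomes $\beta \ge \tfrac d2 + 1 - r$. Such a $\beta$ exists precisely when $r > 1$, producing $f \in H^s(\R^d;\F) \sm X^s_{r,\delta}(\R^d;\F)$ and hence ruling out the inclusion. The only care needed is to verify the two exponent thresholds and that the angular integral over $\Gamma$ contributes a harmless positive constant.
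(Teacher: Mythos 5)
Your proposal is correct, and its core is the same as the paper's: both proofs reduce the question to whether the low-frequency weight $w(\xi) = (\xi_1^2 + \abs{\xi}^{2\delta})/\abs{\xi}^{2r}$ is essentially bounded on $B(0,1)$, and your sufficiency bound $w(\xi) \le 2\abs{\xi}^{2(1-r)} \le 2$ (valid since $\delta > 1$) is exactly the paper's. Where you diverge is the necessity direction: the paper tests the embedding against all band-limited Schwartz functions to extract the pointwise bound $w(\xi) \lesssim \ang{\xi}^{2s} \lesssim 1$ and then reads off $r \le 1$ from the behavior in the $\xi_1$-direction, whereas you exhibit a single explicit function $\hat f = \abs{\xi}^{-\beta}\vchi_{\Gamma \cap B(0,1)}$ concentrated on the cone $\Gamma = \{\abs{\xi_1} \ge \abs{\xi}/2\}$; your exponent arithmetic is right ($\beta < d/2$ for $\hat f \in L^2$, hence $f \in H^s$ by compact Fourier support, and $\beta \ge d/2 + 1 - r$ for divergence of the weighted integral, compatible precisely when $r > 1$). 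The two finishes are equivalent implementations of the same observation — that $w \asymp \abs{\xi}^{2(1-r)}$ blows up on the cone where $\abs{\xi_1} \asymp \abs{\xi}$ — with your version buying a concrete element of $H^s \sm X^s_{r,\delta}$ at the cost of a little integration, and the paper's buying brevity via a duality-style deduction. One small imprecision to fix: when $\F = \R$, Fourier transforms of real functions obey the Hermitian symmetry $\hat f(-\xi) = \overline{\hat f(\xi)}$, so $\hat f \restriction_{B(0,1)}$ does \emph{not} range over all of $L^2(B(0,1))$ as you claim. This is harmless — the weight $w$ is even, so the $L^\infty$ equivalence survives upon testing with even real functions, and your counterexample $\hat f$ is itself real and even (since $\Gamma$ and $\abs{\xi}^{-\beta}$ are symmetric under $\xi \mapsto -\xi$), making $f$ a genuine real-valued $H^s$ function — but the surjectivity assertion should be weakened accordingly.
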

	\begin{proof}
		Suppose initially that $r \le 1$. Then for $\abs{\xi} \le 1$ we can use the fact that $\delta > 1$ to bound
		\begin{equation}
			\omega_{s, r, \delta}(\xi) = \fr{\xi_1^2 + \abs \xi^{2\delta}}{\abs \xi^{2r}} \lesssim \abs \xi^{2 - 2r} + \abs \xi^{2\delta - 2r} \lesssim \abs{\xi}^{2-2r} \lesssim 1.
		\end{equation}
		From this we readily deduce the continuous embedding $H^s(\R^d;\F) \hookrightarrow X^s_{r,\delta}(\R^d;\F)$.
		
		Conversely, suppose we have the continuous embedding $H^s(\R^d;\F) \hookrightarrow X^s_{r,\delta}(\R^d;\F)$, and write $C \ge 0$ for the embedding constant.  Restricting to $f \in \S(\R^d;\F)$ such that $\supp(\hat{f}) \subset B(0,1)$, we find that 
		\begin{equation}
			\int_{B(0,1)} \fr{\xi_1^2 + \abs \xi^{2\delta}}{\abs \xi^{2r}}  \abs{\hat{f}(\xi)}^2 \d \xi \le C^2     \int_{B(0,1)} \ang{\xi}^{2s}  \abs{\hat{f}(\xi)}^2 \d \xi,
		\end{equation}
		and since this must hold for all such $f$, we deduce the pointwise bound
		\begin{equation}
			\fr{\xi_1^2 + \abs \xi^{2\delta}}{\abs \xi^{2r}}   \le C^2 \ang{\xi}^{2s} \lesssim 1.
		\end{equation}
		In turn, this implies that $\xi_1^2 \ls \xi_1^{2r}$  for $\abs{\xi_1} < 1$, and hence $r \le 1$.
	\end{proof}

	However, in this parameter regime this space is genuinely bigger than $H^s(\R^d; \F)$, and also anisotropic - in general, if $f \in X^s_{r, \delta}$, it is not true that $f\circ Q \notin X^s_{r, \delta}$ for every nonidentity linear isometry $Q$.
	
	\begin{theorem}\label{thm_anisotropic_verification}
		Suppose that  $r \le 1$, and $ d > \delta - 2r + 1$.  Let $Q \in O(d) = \{M \in \R^{d \times d} \st M^\intercal M = I\}$ be such that $\abs*{Qe_1 \cdot e_1} < 1$. Then there exists $f \in X^s_{r, \delta}(\R^d;\F) \cap C^\infty_0(\R^d;\F) \sm H^s(\R^d;\F)$ such that $f \circ Q \notin X^s_{r, \delta}(\R^d;\F)$.
	\end{theorem}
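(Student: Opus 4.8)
The plan is to exploit the fact that the anisotropy of $X^s_{r,\delta}$ is carried entirely by the $\xi_1^2$ term in the low-frequency weight: along the hyperplane $\{\xi_1=0\}$ this term vanishes and $\omega_{s,r,\delta}(\xi)\asymp\abs{\xi}^{2\delta-2r}$ is as small as possible, whereas in a generic direction $\omega_{s,r,\delta}(\xi)\asymp\abs{\xi}^{2-2r}$, which is far larger near the origin because $\delta>1$. I will therefore concentrate $\hat f$ in a thin slab hugging $\{\xi_1=0\}$, chosen so that $\norm{f}_{X^s_{r,\delta}}<\infty$; the hypothesis $\abs{Qe_1\cdot e_1}<1$ ensures that $Q$ tilts this slab off the hyperplane, moving its frequency support into the region where the weight is large and forcing $\norm{f\circ Q}_{X^s_{r,\delta}}=\infty$. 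The basic reduction is that $\widehat{f\circ Q}(\xi)=\hat f(Q\xi)$ for $Q\in O(d)$, so after the change of variables $\eta=Q\xi$ we have $\norm{f\circ Q}_{X^s_{r,\delta}}^2=\int_{\R^d}\omega_{s,r,\delta}(Q^\intercal\eta)\abs{\hat f(\eta)}^2\,\d\eta$, where $\omega_{s,r,\delta}(Q^\intercal\eta)$ depends on $\eta$ only through $\eta\cdot(Qe_1)$ and $\abs{\eta}$.

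For the construction I would write $v=Qe_1=(v_1,v')$ with $v_1=Qe_1\cdot e_1$, so $\abs{v_1}<1$ and $v'\neq0$. Fix a solid cone $\Gamma\subset\R^{d-1}$ of directions making angle at most $\pi/3$ with $\pm v'$, so that $\abs{\eta'\cdot v'}\geq\tfrac12\abs{v'}\abs{\eta'}$ on $\Gamma$, and pick $\rho_0\in(0,\tfrac14]$ small enough that $\rho_0^{\delta-1}\abs{v_1}\leq\tfrac14\abs{v'}$. Writing $\eta=(\eta_1,\eta')$, set $E_+=\{\eta:\eta'\in\Gamma,\ 0<\abs{\eta'}<\rho_0,\ \abs{\eta_1}<\abs{\eta'}^\delta\}$, $E=E_+\cup(-E_+)$, and define $\hat f(\eta)=\abs{\eta'}^{-\beta}\chi_E(\eta)$ with $\beta=\tfrac12(\delta+d-2r+1)$; this $\hat f$ is real, even, nonnegative, and supported in $B(0,1)$. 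On $E$ one has $\abs{\eta}\asymp\abs{\eta'}$ (as $\abs{\eta_1}<\abs{\eta'}^\delta\leq\abs{\eta'}$), the choice of $\rho_0$ gives $\abs{\eta\cdot v}\geq\tfrac14\abs{v'}\abs{\eta'}$, and radial profiles integrate as $\int_E h(\abs{\eta'})\,\d\eta\asymp\int_0^{\rho_0}h(\rho)\,\rho^{\delta+d-2}\,\d\rho$, since the slab has height $\asymp\rho^\delta$ and $\Gamma$ contributes a fixed angular factor.

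The verification would then split into four power-counting steps, each consuming exactly one hypothesis. First, taking $h(\rho)=\rho^{-\beta}$ shows $\int_E\abs{\hat f}\,\d\eta\asymp\int_0^{\rho_0}\rho^{(\delta+d+2r-5)/2}\,\d\rho<\infty$ precisely when $\delta+d+2r>3$, which follows from $d>\delta-2r+1$ and $\delta>1$ (together they give $\delta+d+2r>2\delta+1>3$); hence $\hat f$ is a compactly supported $L^1$ function, so $f=(\hat f)^\vee$ is band-limited, and differentiating under the integral together with the Riemann--Lebesgue lemma applied to $(2\pi i\xi)^\alpha\hat f\in L^1$ gives $f\in C^\infty_0(\R^d;\F)$, real-valued since $\hat f$ is real and even. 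Second, $\omega_{s,r,\delta}\asymp\abs{\eta'}^{2\delta-2r}$ on $E\subset B(0,1)$, so $\norm{f}_{X^s_{r,\delta}}^2\asymp\int_0^{\rho_0}\rho^{2\delta-3}\,\d\rho<\infty$ by $\delta>1$, giving $f\in X^s_{r,\delta}$. Third, since $\ang{\eta}^{2s}\asymp1$ on $E$, we get $\norm{f}_{H^s}^2\asymp\int_E\abs{\hat f}^2\,\d\eta\asymp\int_0^{\rho_0}\rho^{2r-3}\,\d\rho=\infty$ by $r\leq1$, so $f\notin H^s$. Finally, bounding $\omega_{s,r,\delta}(Q^\intercal\eta)\geq(\eta\cdot v)^2/\abs{\eta}^{2r}\gtrsim\abs{\eta'}^{2-2r}$ on $E$ and using the reduction above yields $\norm{f\circ Q}_{X^s_{r,\delta}}^2\gtrsim\int_0^{\rho_0}\rho^{-1}\,\d\rho=\infty$, so $f\circ Q\notin X^s_{r,\delta}$.

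The hard part is the calibration: the slab thickness $\abs{\eta'}^\delta$, the exponent $\beta$, and the aperture of $\Gamma$ must be tuned so that all four conclusions hold at once, and the power counting is tight (choosing $\beta$ at its critical value makes the last two integrals diverge only logarithmically). The geometric crux is that, precisely because $\delta>1$, the slab of height $\abs{\eta'}^\delta$ is thin enough that after rotation $(\eta\cdot v)^2\asymp\abs{\eta'}^2$ dominates $\abs{\eta}^{2\delta}$; this is exactly what converts the tiny weight $\abs{\eta'}^{2\delta-2r}$ into the large weight $\abs{\eta'}^{2-2r}$ and produces the divergence.
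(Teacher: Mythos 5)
Your proof is correct and is essentially the paper's argument in continuous form: the paper sums dyadic blocks $\sum_k 4^{\alpha k}\chi_{\pm R_{4^{-k}}}$ with rectangles of size $\ep^\delta\times\ep^{d-1}$ hugging $\{\xi_1=0\}$, placed in orthants chosen by the signs of $Qe_1\cdot e_k$ to prevent cancellation in $\xi\cdot Qe_1$, and your slab $E$ with weight $\abs{\eta'}^{-\beta}$ is the continuum analogue, with your $\beta=\tfrac{1}{2}(\delta+d-2r+1)$ equal to the paper's $\alpha$ and your cone $\Gamma$ around $\pm v'$ serving the same anti-cancellation purpose. All four power counts match the paper's (log-critical divergence for the rotated norm), and your $H^s$ exclusion via $\ang{\xi}^{2s}\asymp 1$ on the support is even slightly cleaner than the paper's $f\notin L^2\supseteq H^s$ step, since it works for every $s\in\R$.
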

	\begin{proof}
		Let $Q \in O(d)$ with $\abs*{Qe_1 \cdot e_1} <1$. For $1 \le k \le d$ set $\sigma_k = 1$ if $Qe_1 \cdot e_k \ge 0$ and $\sigma_k = -1$ if $Qe_1 \cdot e_k < 0$.  For $\ep > 0$ set 
		$R_\ep = \sigma_1\br{ {\ep^\delta}/2, {3\ep^\delta}/{2} } \times \prod_{k = 2}^d \sigma_k \br{{\ep}/2, {3\ep}/{2}}$.  By construction, for each $0 < \ep < \frac{2}{3\sqrt  d}$  and $\xi \in R_\ep \cup (- R_\ep) \sseq B(0, 1)$ we have that
		\begin{equation}
			\abs*{Q^\intercal\xi \cdot e_1} = \abs*{\sum_{k = 1}^d \xi_k \pr{e_k \cdot Qe_1 }} = \abs*{\sum_{k = 1}^d \sigma_k\xi_k \abs*{e_k \cdot Qe_1 }} = \sum_{k = 1}^d \abs*{\xi_k} \abs*{e_k \cdot Qe_1} \asymp \ep^\delta\abs*{e_1 \cdot Qe_1} + \sum_{k = 2}^d \abs*{e_k \cdot Qe_1} \ep \asymp \ep,
		\end{equation}
		where the last equivalence follows because $\delta > 1$ and we can find some $j \ge 2$ with $\abs*{e_j \cdot Qe_1}>0$. Furthermore, again because $\delta > 1$, we have that $\abs \xi \asymp \ep$ for $\xi \in R_\ep \cup (- R_\ep)$. We thus readily deduce the equivalences
		\begin{equation}
			\omega_{s, r, \delta}(\xi) = \frac{\abs{\xi_1}^2 + \abs \xi^{2\delta}}{\abs \xi^{2r}} \asymp \frac{\ep^{2\delta} + \ep^{2\delta}}{\ep^{2r}} \asymp \ep^{2\delta - 2r}\text{ and}
		\end{equation}
		\begin{equation}
			\omega_{s, r, \delta}(Q^\intercal \xi) = \frac{\abs*{Q^\intercal \xi \cdot e_1}^2 + \abs*{Q^\intercal \xi}^{2\delta}}{\abs*{Q^\intercal \xi}^{2r}} =\frac{\abs*{Q^\intercal \xi \cdot e_1}^2 + \abs*{\xi}^{2\delta}}{\abs*{ \xi}^{2r}} \asymp \frac{\ep^2 + \ep^{2\delta}}{\ep^2r} \asymp \ep^{2 - 2r}
		\end{equation}
		for $\xi \in R_\ep \cup (- R_\ep)$.  
		
		Set $F_\ep = \vchi_{R_\ep} + \vchi_{-R_\ep}$ and note $F_\ep(-\xi) = F_\ep(\xi) = \overline{F_\ep(\xi)}$.  The previous calculations then show that
		\begin{equation}
			\norm*{F_\ep}_{X^s_{r, \delta}}^2 = \int_{\R^d} \omega_{s, r, \delta}(\xi) \abs*{F_\ep(\xi)}^2 \;\d\xi \asymp \ep^{2\delta - 2r}\ep^{\delta + d - 1} = \ep^{3\delta - 2r + d - 1},
		\end{equation}
		\begin{equation}
			\norm*{F_\ep}_{L^1} = \norm*{F_\ep}_{L^2}^2 = \abs*{R_\ep} + \abs*{-R_\ep} \asymp \ep^{\delta + d - 1},\text{ and}
		\end{equation}
		\begin{equation}
			\norm*{F_\ep \circ Q}_{X^s_{r, \delta}}^2 = \int_{\R^d} \omega_{s, r, \delta}(\xi) \abs*{F_\ep(Q\xi)}^2 \;\d\xi = \int_{\R^d} \omega_{s, r, \delta}(Q^\intercal\xi) \abs*{F_\ep(\xi)}^2 \;\d\xi \asymp \ep^{2 - 2r}\ep^{\delta + d - 1} = \ep^{1 - 2r + \delta + d}.
		\end{equation}

		Let $\alpha = \frac{d + \delta + 1 - 2r}{2}$ and fix $K \in \N$ with $4^K > \frac{3 \sqrt d}{2}$. Set $F = \sum_{k \ge K} 4^{\alpha k} F_{4^{-k}}$ and  note that $\supp(F_{4^{-k}}) \cap \supp(F_{4^{-j}})$ are disjoint for $j, k \ge K$ and $j \ne k$.  Now we compute various norms of $F$.  First, using that $d > \delta - 2r + 1$ we see $\alpha - \delta - d + 1 = \frac{3 - d - \delta - 2r}{2} < 1-\delta  < 0$, and so
		\begin{equation}
			\int_{\R^d} \abs*{F(\xi)} \; \d \xi \asymp \sum_{k = K}^\infty 4^{\alpha k} 4^{-k(\delta + d - 1)} = \sum_{k = K}^\infty 4^{\frac{3d - \delta - 1}2 - r  } < \sum_{k = K}^\infty 4^{k(1 - \delta) } < \infty.
		\end{equation}
		The definition of $\alpha$ requires that $2\alpha - 3\delta - 2r + d - 1 = 2 - 2\delta < 0$; thus
		\begin{equation}
			\int_{\R^d} \omega_{s, r, \delta}(\xi) \abs*{F(\xi)}^2 \; \d \xi \asymp \sum_{k = K}^\infty 4^{2\alpha k} 4^{-k(3\delta - 2r + d - 1)} = \sum_{k = K}^\infty 4^{k(2 - 2\delta)} < \infty.
		\end{equation}
		Because $r \le 1$ we have that $2\alpha - \delta - d + 1 = 2 - 2r \ge 0$; thus
		\begin{equation}
			\norm*{F}_{L^2}^2 = \int_{\R^d} \abs*{F(\xi)}^2\; \d\xi \asymp \sum_{k = K}^\infty 4^{2\alpha k} 4^{-k(\delta + d - 1)} = \sum_{k = K}^\infty 4^{2 - 2r} = \infty.
		\end{equation}
		Finally, $\alpha$ is defined so that $2\alpha - 1 + 2r - d - \delta = 0$, and so
		\begin{equation}
			\norm*{F \circ Q}_{X^s_{r, \delta}}^2 = \int_{\R^d}\omega_{s, r, \delta}(\xi)  \abs*{F(Q\xi)}^2\; \d\xi \asymp \sum_{k = K}^\infty 4^{2\alpha k} 4^{-k(1 - 2r + \delta + d)} = \sum_{k = K}^\infty 1 = \infty.
		\end{equation}
		With this, set $f = \check F$. The previous calculations imply that $f \in X^s_{r, \delta}(\R^d)$ but $f \notin L^2(\R^d) \sseq H^s(\R^d)$ and $f \circ Q \notin X^s_{r, \delta}(\R^d)$. $F$ is compactly supported and in $L^1$, so we conclude that $f \in C^\infty_0(\R^d)$.
	\end{proof}
	
	We close out this section with identifying when derivatives of $X^s_{r, \delta}$ functions lie in classical Sobolev spaces. We give some examples of the uses of these estimates in the last section.
	
	\begin{prop}\label{derivative_estimates}
		Suppose that $1 - r \le s$, $\delta - r \le \tau$, and $\sigma \le s$ and $f \in X^s_{r, \delta}(\R^d; \F)$. Suppose $\ph: \R^d \to [0, \infty)$ satisfies $\ph(\xi) \asymp \abs \xi^\tau$ for $\abs \xi \le 1$ and $\ph(\xi) \asymp \abs \xi^\sigma$ for $\abs \xi \ge 1$. We then have $\ph\pr{\sqrt{-\Delta}} f \in H^{s - \tau}(\R^d ;\F)$ and $\partial_1 f \in \dot{H}^{-r}(\R^d;\F)$  and 
		\begin{equation}
			\norm*{\ph\pr{\sqrt{-\Delta}} f}_{H^{s - \tau}}  + \norm*{\partial_1 f}_{\dot{H}^{-r}} \lesssim \norm*{f}_{X^s_{r, \delta}}.
		\end{equation}
		In particular, when $\delta - r \le \tau \le s$, we have that $\norm*{\pr{-\Delta}^{\tau/2} f}_{H^{s - \tau}} + \norm*{\p_1 f}_{\dot H^{-r}} \lesssim \norm*{f}_{X^s_{r, \delta}}$.
	\end{prop}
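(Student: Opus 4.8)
The plan is to transfer everything to the Fourier side and reduce the two claims to elementary monomial comparisons of Fourier multipliers against the weight $\omega_{s,r,\delta}$, carried out separately on the two frequency regions $B(0,1)$ and $B(0,1)\sc$ that define the $X^s_{r,\delta}$-norm. Since $f \in X^s_{r,\delta}$ satisfies $\hat f \in L^1_\loc$ and the symbols $\ph$ and $\xi \mapsto \xi_1$ are continuous, the products $\ph\,\hat f$ and $\xi_1\hat f$ are locally integrable, so $\ph(\sqrt{-\Delta})f$ and $\partial_1 f$ are well-defined tempered distributions; the weighted $L^2$ bounds I establish below then simultaneously certify membership in the target spaces and produce the estimate. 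On the Fourier side the targets read $\norm*{\ph(\sqrt{-\Delta})f}_{H^{s-\tau}}^2 = \int_{\R^d} \ang{\xi}^{2(s-\tau)}\ph(\xi)^2\abs{\hat f(\xi)}^2\,\d\xi$ and $\norm*{\partial_1 f}_{\dot{H}^{-r}}^2 \asymp \int_{\R^d} \abs{\xi}^{-2r}\xi_1^2\abs{\hat f(\xi)}^2\,\d\xi$, so in each case it suffices to dominate the multiplier appearing in the integrand by $C\,\omega_{s,r,\delta}(\xi)$ pointwise, region by region.

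I would dispatch $\partial_1 f$ first, as it is the cleaner of the two. On $B(0,1)$ the bound is free: there $\abs{\xi}^{-2r}\xi_1^2 \le \frac{\xi_1^2+\abs{\xi}^{2\delta}}{\abs{\xi}^{2r}} = \omega_{s,r,\delta}(\xi)$, using only $\abs{\xi}^{2\delta}\ge 0$. On $B(0,1)\sc$ I use $\xi_1^2 \le \abs{\xi}^2$ to write $\abs{\xi}^{-2r}\xi_1^2 \le \abs{\xi}^{2(1-r)}$, and then $\abs{\xi}^{2(1-r)}\lesssim\ang{\xi}^{2s} = \omega_{s,r,\delta}(\xi)$ for $\abs{\xi}>1$, which is precisely the monomial inequality guaranteed by the hypothesis $1 - r \le s$. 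Integrating against $\abs{\hat f}^2$ and adding the two regions gives $\norm*{\partial_1 f}_{\dot{H}^{-r}}\lesssim\norm*{f}_{X^s_{r,\delta}}$.

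The term $\ph(\sqrt{-\Delta})f$ is where the anisotropy enters, and the low-frequency region is the crux. There $\ang{\xi}\asymp 1$ and $\ph(\xi)\asymp\abs{\xi}^\tau$, so the integrand multiplier is $\asymp\abs{\xi}^{2\tau}$ and I must show $\abs{\xi}^{2\tau}\lesssim\frac{\xi_1^2+\abs{\xi}^{2\delta}}{\abs{\xi}^{2r}}$ on $B(0,1)$. The decisive point is that I should not try to use the anisotropic term $\xi_1^2$ at all, since it vanishes on the hyperplane $\xi_1 = 0$ and so cannot control an isotropic power of $\abs{\xi}$; instead I discard it and bound below by $\frac{\abs{\xi}^{2\delta}}{\abs{\xi}^{2r}} = \abs{\xi}^{2(\delta-r)}$. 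Since $\abs{\xi}\le 1$ and $t\mapsto\abs{\xi}^{t}$ is decreasing in $t$ there, the inequality $\abs{\xi}^{2\tau}\lesssim\abs{\xi}^{2(\delta-r)}$ holds exactly because $\tau\ge\delta-r$. On $B(0,1)\sc$ the multiplier is $\ang{\xi}^{2(s-\tau)}\ph(\xi)^2\asymp\abs{\xi}^{2(s-\tau+\sigma)}$, which is dominated by $\ang{\xi}^{2s}\asymp\abs{\xi}^{2s}$ as soon as the exponents obey $s-\tau+\sigma\le s$; this is the role of the hypothesis controlling $\sigma$, and it holds with equality in the homogeneous case $\sigma=\tau$. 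Summing the two regions yields $\norm*{\ph(\sqrt{-\Delta})f}_{H^{s-\tau}}\lesssim\norm*{f}_{X^s_{r,\delta}}$.

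Combining the two estimates gives the displayed inequality, and specializing to $\ph(\xi)=\abs{\xi}^\tau$, for which $\ph(\sqrt{-\Delta})=(-\Delta)^{\tau/2}$ and $\sigma=\tau$, recovers the final ``in particular'' assertion under $\delta-r\le\tau\le s$. I do not expect a genuine obstacle here: the whole argument is a two-regime frequency decomposition reducing to four scalar power comparisons. The only places demanding care are orienting those comparisons correctly on the unit ball, where increasing the exponent decreases $\abs{\xi}^{t}$, and the structural observation in the low-frequency $\ph$ estimate that the isotropic $\abs{\xi}^{2\delta}$ term of $\omega_{s,r,\delta}$, rather than the degenerate $\xi_1^2$ term, is what must be retained.
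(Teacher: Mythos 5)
Your overall strategy is the same as the paper's (Fourier-side reduction and pointwise comparison of multipliers against $\omega_{s,r,\delta}$ on $B(0,1)$ and its complement), and your treatment of $\partial_1 f$ and of the low-frequency part of $\ph(\sqrt{-\Delta})f$ --- discarding the degenerate $\xi_1^2$ term and using $\tau \ge \delta - r$ together with $\abs{\xi} \le 1$ --- matches the paper exactly. But your high-frequency step for $\ph(\sqrt{-\Delta})f$ has a genuine gap: there you need $\ang{\xi}^{2(s-\tau)}\ph(\xi)^2 \asymp \abs{\xi}^{2(s-\tau+\sigma)} \lesssim \abs{\xi}^{2s}$, i.e.\ $\sigma \le \tau$, and you assert this is supplied by ``the hypothesis controlling $\sigma$.'' The stated hypothesis is $\sigma \le s$, which does not imply $\sigma \le \tau$: take for instance $\delta = 2$, $r = 1$, $\tau = 1 = \delta - r$, and $\sigma = s$ large; then all hypotheses hold, yet $s - \tau + \sigma = 2s - 1 > s$, your comparison fails, and in fact the literal conclusion $\ph(\sqrt{-\Delta})f \in H^{s-\tau}$ is false in general in this regime, since at high frequency $\ph(D)$ costs $\sigma$ derivatives, leaving only $H^{s-\sigma}$ regularity.

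The resolution is that the exponent ``$s-\tau$'' in the statement is a typo for ``$s-\sigma$,'' and the paper's own proof tacitly proves the corrected claim: it estimates against the weight $(1+\abs{\xi}^2)^{s-\sigma}$, for which the high-frequency comparison is simply $\abs{\xi}^{2(s-\sigma)}\abs{\xi}^{2\sigma} = \abs{\xi}^{2s} \asymp \omega_{s,r,\delta}(\xi)$ with no condition relating $\sigma$ and $\tau$, while the low-frequency comparison is the one you gave. This reading is confirmed by the ``in particular'' case, where $\sigma = \tau$ makes the two exponents agree, and by the application in Theorem \ref{pde_double_linear}, where $\ph(D)$ is shown to map $X^{s+\sigma}_{r,\delta}$ into $H^{s} = H^{(s+\sigma)-\sigma}$ with $\tau = \delta$. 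To repair your argument, replace the target norm $H^{s-\tau}$ by $H^{s-\sigma}$ (or else add the hypothesis $\sigma \le \tau$); with that change the rest of your computation goes through verbatim and coincides with the paper's proof.
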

	\begin{proof}
		We note since $\delta - r \le \tau$ that $\abs \xi^{2\tau} \leq \abs \xi^{2(\delta - r)} \le \omega_{s, r, \delta}(\xi)$ for $\abs \xi \leq 1$. Thus we have $(1 + \abs \xi^2)^{s - \sigma} \abs \xi^{2\tau}\lesssim \omega_{s, r, \delta}(\xi)$ for $\abs \xi \leq 1$. Clearly we have $(1 + \abs \xi^2)^{s - \sigma} \abs \xi^{2\sigma} \lesssim \abs \xi^{2s} \asymp \omega_{r, s, \delta}(\xi)$ for $\abs \xi \ge 1$, thus we see
		\begin{multline}
			\norm*{\ph\pr{\sqrt{-\Delta}} f}_{H^{s - t}}^2 \asymp \int_{B(0, 1)} (1 + \abs\xi^2)^{s - \sigma} \abs \xi^{2\tau} \abs*{\hat f(\xi)}^2 \; \d\xi + \int_{B(0, 1)\sc} (1 + \abs\xi^2)^{s - \sigma} \abs \xi^{2\sigma} \abs*{\hat f(\xi)}^2 \; \d\xi\\ \lesssim \int \omega_{s, r, \delta}(\xi) \abs*{\hat f(\xi)}^2 \; \d\xi =  \norm*{f}_{X^s_{r, \delta}}^2.
		\end{multline}
		The inclusion and estimate for $\partial_1 f$ follow similarly after we observe that $1 -r  \le s$ implies that $\frac{\abs{\xi_1}^2}{\abs{\xi}^{2r}} \le \abs{\xi}^{2s}$ for $\abs{\xi} \ge 1$. The final claim follows by setting $\ph(\xi) = \abs \xi^\tau$.
	\end{proof}
	
	\section{When is $X^s_{r, \delta}$ an algebra?}\label{sec_algebra}

	We now proceed to our goal of characterizing when $X^s_{r, \delta}$ is an algebra. Our approach  is modeled on the Littlewood-Paley techniques used in \cite{pausader_etal}. We will look to leverage that $H^s$ is an algebra when $s > d/2$. The first step shows that for $f \in X^s_{r, \delta}$ and $g \in H^s$, we know that $fg \in H^s$.
	
	\begin{theorem}\label{ideal}
		Assume that $d > 1 + \delta - 2r$  and $s > d/2$.  Then the following hold.
		\begin{enumerate}
			\item There exists a constant $C > 0$ such that if $f \in X^s_{r, \delta}(\R^d;\F)$ and $g \in H^s(\R^d;\F)$, then $fg \in H^s(\R^d;\F)$ and
			\begin{equation}
				\norm*{fg}_{H^s} \le C \norm*{f}_{X^s_{r, \delta}} \norm*{g}_{H^s}.
			\end{equation}
			\item For $1 \le k \in \N$ the map 
			\begin{equation}
				H^s(\R^d;\F) \times \prod_{j=1}^k X^{s}_{r,\delta}(\R^d;\F) \ni (g,f_1,\dotsc,f_k) \mapsto g \prod_{j=1}^k f_j \in H^s(\R^d;\F)
			\end{equation}
			is a bounded $(k+1)-$linear map.
		\end{enumerate}
	\end{theorem}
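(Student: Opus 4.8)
The plan is to split $f$ into low- and high-frequency pieces using Proposition \ref{thm_fourier_split_estimates} and treat them by completely different mechanisms. Set $f_l := f_{l,1}$ and $f_h := f_{h,1}$, so that $\widehat{f_l} = \hat f\,\vchi_{B(0,1)}$ is band-limited and $\widehat{f_h} = \hat f\,\vchi_{B(0,1)\sc}$. Since $s > d/2$, Sobolev embedding gives $H^s(\R^d;\F) \hookrightarrow C^0_b(\R^d;\F)$, so $f_h$ and $g$ are continuous, and together with $f_l \in C^\infty_0(\R^d;\F)$ this makes $fg = f_l g + f_h g$ an unambiguous pointwise product. It then suffices to bound $f_l g$ and $f_h g$ separately in $H^s$, and to sum.

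For the high-frequency piece I would invoke Proposition \ref{thm_fourier_split_estimates}, which gives $f_h \in H^s$ with $\norm{f_h}_{H^s} \ls \norm{f}_{X^s_{r,\delta}}$. Because $s > d/2$, the space $H^s(\R^d;\F)$ is a Banach algebra, whence
\begin{equation}
    \norm{f_h g}_{H^s} \ls \norm{f_h}_{H^s}\norm{g}_{H^s} \ls \norm{f}_{X^s_{r,\delta}}\norm{g}_{H^s};
\end{equation}
this is where the hypothesis $s > d/2$ is consumed. For the low-frequency piece the essential structural input is that $\widehat{f_l}$ is supported in $B(0,1)$ and, crucially, lies in $L^1$ with $\norm{\widehat{f_l}}_{L^1} \le \int_{B(0,1)}\abs{\hat f} \ls \norm{f}_{X^s_{r,\delta}}$ by the first item of Proposition \ref{thm_fourier_split_estimates}—this is exactly where the integrability from Lemma \ref{low_freq_est} enters. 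Writing $\widehat{f_l g} = \widehat{f_l} * \hat g$ and using that $\abs\eta \le 1$ on $\supp \widehat{f_l}$ together with the Peetre-type bound $\ang{\xi}^s \ls \ang{\xi-\eta}^s$ (valid for $s \ge 0$ when $\abs\eta \le 1$), I would estimate
\begin{equation}
    \ang{\xi}^s\abs{\widehat{f_l g}(\xi)} \le \int_{B(0,1)} \abs{\widehat{f_l}(\eta)}\,\ang{\xi}^s\abs{\hat g(\xi-\eta)}\,\d\eta \ls \pr{\abs{\widehat{f_l}} * \pr{\ang{\cdot}^s\abs{\hat g}}}(\xi).
\end{equation}
Taking $L^2$ norms in $\xi$ and applying Young's convolution inequality then yields
\begin{equation}
    \norm{f_l g}_{H^s} = \norm{\ang{\cdot}^s\widehat{f_l g}}_{L^2} \ls \norm{\widehat{f_l}}_{L^1}\norm{\ang{\cdot}^s\hat g}_{L^2} = \norm{\widehat{f_l}}_{L^1}\norm{g}_{H^s} \ls \norm{f}_{X^s_{r,\delta}}\norm{g}_{H^s},
\end{equation}
and summing the two bounds proves item (1).

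Item (2) I would obtain by induction on $k$, with item (1) as the base case $k=1$. For the inductive step, given $(g,f_1,\dots,f_k)$ set $g' = g f_1$, which by item (1) lies in $H^s$ with $\norm{g'}_{H^s} \ls \norm{f_1}_{X^s_{r,\delta}}\norm{g}_{H^s}$; since $g\prod_{j=1}^k f_j = g'\prod_{j=2}^k f_j$, applying the inductive hypothesis to $(g',f_2,\dots,f_k)$ gives $\norm{g\prod_{j=1}^k f_j}_{H^s} \ls \norm{g}_{H^s}\prod_{j=1}^k \norm{f_j}_{X^s_{r,\delta}}$. Linearity in each argument is immediate from the pointwise definition of the product, so the map is bounded and $(k+1)$-linear.

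The only genuinely delicate point is the low-frequency estimate: the product $f_l g$ must be controlled in $H^s$ even though $f_l$ need not lie in $H^s$—or even in $L^2$—and the mechanism that rescues the argument is precisely the $L^1$-integrability of $\hat f$ near the origin, guaranteed by the running hypothesis $d > 1 + \delta - 2r$ via Lemma \ref{low_freq_est}. Once the splitting of Proposition \ref{thm_fourier_split_estimates} and the algebra property of $H^s$ are in hand, everything else is routine bookkeeping.
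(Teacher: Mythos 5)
Your argument is correct, and for the low-frequency piece it takes a genuinely different route from the paper. Both proofs use the identical splitting $f = f_{l,1} + f_{h,1}$ from Proposition \ref{thm_fourier_split_estimates} and dispose of $f_h g$ the same way, via $\norm{f_h}_{H^s} \ls \norm{f}_{X^s_{r,\delta}}$ and the Banach algebra property of $H^s$ for $s > d/2$. The divergence is in how $f_l g$ is controlled: the paper observes that $f_l \in C^k_b$ for every $k$ with $\norm{f_l}_{C^k_b} \ls \norm{f}_{X^s_{r,\delta}}$, deduces that multiplication by $f_l$ is bounded on the integer-order spaces $H^k$, and then interpolates to reach $H^s$; you instead work directly on the Fourier side, writing $\widehat{f_l g} = \widehat{f_l} \ast \hat g$ and exploiting that $\supp \widehat{f_l} \subseteq B(0,1)$ so that the Peetre-type bound $\ang{\xi}^s \ls \ang{\xi - \eta}^s$ holds uniformly for $\abs{\eta} \le 1$, after which Young's inequality with $\norm*{\widehat{f_l}}_{L^1} \ls \norm{f}_{X^s_{r,\delta}}$ (item (1) of Proposition \ref{thm_fourier_split_estimates}, i.e.\ Lemma \ref{low_freq_est}) closes the estimate. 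Your route is more elementary and self-contained—it avoids invoking interpolation theory entirely, and since Peetre's inequality with $\abs{\eta} \le 1$ costs only a constant for any real exponent, it in fact shows multiplication by $f_l$ is bounded on $H^t$ for every $t \in \R$, not just $t \ge 0$; the paper's route, by contrast, keeps the argument at the level of classical multiplier facts about $C^k_b$ functions without Fourier-side manipulation. The one point you should state rather than assume is the validity of the convolution identity $\widehat{f_l g} = \widehat{f_l} \ast \hat g$ for $\widehat{f_l} \in L^1$ compactly supported and $\hat g \in L^2$—routine by density, but it is the hinge of your computation. Your induction for item (2) is the natural expansion of the paper's one-line remark and is fine.
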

	\begin{proof}
		
		The second item follows easily from the first, so we only prove the first.  Fix $f \in X^s_{r,\delta}(\R^d;\F)$ and write $f = f_l + f_h = f_{l,1} + f_{h,1}$ as in Theorem \ref{thm_fourier_split_estimates} with $R=1$.  Since $f_h \in H^s(\R^d;\F)$ and $s > d/2$, we have that $f_h g \in H^s(\R^d;\F)$ and $\norm{f_h g}_{H^s} \ls \norm{f_h}_{H^s} \norm{g}_{H^s} \ls \norm{f}_{X^s_{r,\delta}}\norm{g}_{H^s}$.
		
		On the other hand, since $f_l \in C^k_b(\R^d;\F)$ for every $k \in \N$, we may readily deduce that multiplication by $f_l$ defines a bounded linear map from $H^k(\R^d;\F)$ to itself for every $k \in \N$, and $\norm{f_l \varphi}_{H^k} \ls \norm{f_l}_{C^k_b} \norm{\varphi}_{H^k} \ls \norm{f}_{X^{s}_{r,\delta}}\norm{\varphi}_{H^k}$ for every $\varphi \in H^k(\R^d;\F)$. 		Interpolating, we conclude that multiplication by $f_l$ defines a bounded linear map from $H^t(\R^d;\F)$ to itself for every $0 \le t \in \R$ and $\norm{f_l \varphi}_{H^t} \le C(t)   \norm{f}_{X^{s}_{r,\delta}} \norm{\varphi}_{H^k}$ for every $\varphi \in H^t(\R^d;\F)$, where $C(t)\ge 0$ is a constant that depends on $t$ but is independent of $f$ or $\varphi$.   Picking $t=s$ then shows that $f_l g \in H^s(\R^d;\F)$ and $\norm{f_l g}_{H^s} \ls \norm{f}_{X^s_{r,\delta}} \norm{g}_{H^s}$.
	\end{proof}
	
	We now mark some notation for convenience.
	
	\begin{definition}
		We define the measurable function $\mu_{r, \delta}: \R^d \to [0, \infty)$ by
		$
		\mu_{r, \delta}(\xi) = \frac{\abs {\xi_1} + \abs \xi^\delta}{\abs \xi^r},
		$
		which is asymptotically equivalent to $\sqrt{\omega_{s, r, \delta}(\xi)}$ for $\abs \xi \le 1$.
		We then have that $$\norm*{f}_{X^s_{r, \delta}} \asymp \norm*{\pr{\mu_{r, \delta}\vchi_{B(0, 1)} + \langle\cdot\rangle^s \vchi_{B(0, 1)\sc}}\hat f}_{L^2}.$$
		We then define the trilinear functional $I: \pr{L^0(\R^d; [0, \infty])}^3 \to [0, \infty]$ by
		\begin{equation}
			I(F, G, H) = \int_{B(0, 1)^2} \frac{\mu_{r, \delta}(\xi + \eta)}{\mu_{r, \delta}(\xi) \mu_{r, \delta}(\eta)} F(\xi) G(\eta) H(\xi + \eta) \; \d\xi\; \d\eta,
		\end{equation}
		where $L^0(\R^d; [0, \infty])$ denotes the nonnegative measurable functions on $\R^d$.
	\end{definition}
	
	In fact, $I$ induces a bounded trilinear functional over $\pr{L^2(\R^d; \F)}^3$ as long as $I$ is bounded when restricted to the subset  $\pr{L^2(\R^d; [0, \infty])}^3$.
	
	\begin{lemma}\label{pos}
		Suppose there exists a constant $C > 0$ such that
		\begin{equation}
			I(F, G, H) \le C \norm*{F}_{L^2} \norm*{G}_{L^2}\norm*{H}_{L^2}
		\end{equation}
		for all $F, G, H \in L^2(\R^d; [0, \infty])$. Then $I$ induces a bounded trilinear map over $\pr{L^2(\R^d; \F)}^3$ into $\F$ satisfying the same formula, and there exists some constant $C' > 0$ such that
		\begin{equation}
			\abs*{I(F, G, H)} \le C' \norm*{F}_{L^2} \norm*{G}_{L^2}\norm*{H}_{L^2}
		\end{equation}
	\end{lemma}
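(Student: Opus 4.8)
The plan is to extend $I$ from the cone of nonnegative measurable functions to all of $\pr{L^2(\R^d;\F)}^3$ by a direct domination argument, exploiting that the kernel defining $I$ is pointwise nonnegative. The whole proof rests on the inequality $\abs*{I(F,G,H)} \le I(\abs{F},\abs{G},\abs{H})$, so there is no genuine obstacle beyond checking absolute convergence before one is entitled to speak of linearity.

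First I would fix $F, G, H \in L^2(\R^d;\F)$ and record that their pointwise moduli $\abs{F}, \abs{G}, \abs{H}$ lie in $L^2(\R^d;[0,\infty])$ with $\norm*{\abs{F}}_{L^2} = \norm*{F}_{L^2}$, and similarly for $G$ and $H$. Since $\mu_{r,\delta}$ takes values in $[0,\infty]$, the kernel $\mu_{r,\delta}(\xi+\eta)/\pr{\mu_{r,\delta}(\xi)\mu_{r,\delta}(\eta)}$ is nonnegative almost everywhere on $B(0,1)^2$. Consequently the integrand defining $I(F,G,H)$ is dominated in modulus, pointwise a.e., by the integrand defining $I(\abs{F},\abs{G},\abs{H})$, because the modulus of the former equals the kernel times $\abs{F(\xi)}\abs{G(\eta)}\abs{H(\xi+\eta)}$, which is exactly the latter.

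Next I would invoke the hypothesis applied to the triple $(\abs{F}, \abs{G}, \abs{H}) \in \pr{L^2(\R^d;[0,\infty])}^3$, which gives $I(\abs{F},\abs{G},\abs{H}) \le C \norm*{F}_{L^2}\norm*{G}_{L^2}\norm*{H}_{L^2} < \infty$. By the domination just established, the integral defining $I(F,G,H)$ is therefore absolutely convergent, hence defines an element of $\F$, and the stated integral formula makes sense on $\F$-valued inputs. Trilinearity is then immediate from the linearity of the Lebesgue integral in each of the factors $F(\xi)$, $G(\eta)$, and $H(\xi+\eta)$, the absolute convergence being what licenses splitting and rescaling the integral.

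Finally, combining the triangle inequality for integrals with the bound of the previous step yields $\abs*{I(F,G,H)} \le I(\abs{F},\abs{G},\abs{H}) \le C\norm*{F}_{L^2}\norm*{G}_{L^2}\norm*{H}_{L^2}$, so the conclusion holds with $C' = C$. The only point requiring any care is confirming absolute convergence before asserting linearity and the bound, and this is supplied precisely by the domination by $I(\abs{F},\abs{G},\abs{H})$.
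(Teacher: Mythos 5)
Your proof is correct, and it takes the same route as the paper: the paper's proof is just a citation to Lemma 2.7 of the Koganemaru--Tice reference, whose argument is exactly your domination scheme, namely the pointwise bound $\abs{I(F,G,H)} \le I(\abs{F},\abs{G},\abs{H})$ coming from nonnegativity of the kernel $\mu_{r,\delta}(\xi+\eta)/\pr{\mu_{r,\delta}(\xi)\mu_{r,\delta}(\eta)}$, which gives absolute convergence, licenses trilinearity, and yields the estimate with $C' = C$. Your attention to verifying absolute convergence before invoking linearity of the integral is precisely the one point of care the argument requires.
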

	\begin{proof}
		The proof for this is identical to Lemma 2.7 in \cite{koganemaru_and_tice} .
	\end{proof}
	
	Using this lemma, we can identify a crucial link between $X^s_{r,\delta}$ being an algebra and the boundedness of $I$.

	\begin{prop}\label{algebrafunctional}
		Assume that $d > 1 + \delta - 2r$, $r \le 1$, and $s > d / 2$.   There exists a constant $C > 0$ such that
		\begin{equation}
			\norm*{fg}_{X^s_{r, \delta}} \le C\norm*{f}_{X^s_{r, \delta}}\norm*{g}_{X^s_{r, \delta}} \text{ for all }f, g \in X^s_{r, \delta}(\R^d;\F)
		\end{equation}
		if and only if there exists a constant $C > 0$ such that 
		\begin{equation}
			I(F, G, H) = \int_{B(0, 1)^2} \fr{\mu_{r, \delta}(\xi + \eta)}{\mu_{r, \delta}(\xi)\mu_{r, \delta}(\eta)} F(\xi) G(\eta) H(\xi + \eta)\;\d \xi \d \eta \leq C\norm F_{L^2} \norm G_{L^2}\norm H_{L^2}
		\end{equation} for all $F, G, H \in  L^2(\R^d;[0, \infty])$.  
	\end{prop}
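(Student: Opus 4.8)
The plan is to exploit the convolution structure of the Fourier transform to recognize $I$ as essentially computing the low-frequency part of $\norm{fg}_{X^s_{r,\delta}}$, and then to pass between the two inequalities by duality. The starting observation is that, as noted in the Definition, $\mu_{r,\delta}^2 \asymp \omega_{s,r,\delta}$ on $B(0,1)$, while on the transition annulus $B(0,2)\sm B(0,1)$ one has $\mu_{r,\delta}(\zeta) \asymp \langle\zeta\rangle^s \asymp 1$. Consequently, if $h$ is any distribution with $\supp(\hat h)\sseq B(0,2)$, then $\norm{h}_{X^s_{r,\delta}}^2 \asymp \int_{B(0,2)} \mu_{r,\delta}(\zeta)^2 \abs{\hat h(\zeta)}^2 \d\zeta$. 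Since $\widehat{fg} = \hat f * \hat g$, writing $\zeta = \xi + \eta$ converts the weight $\mu_{r,\delta}(\xi+\eta)/(\mu_{r,\delta}(\xi)\mu_{r,\delta}(\eta))$ appearing in $I$ into exactly the weight $\mu_{r,\delta}(\zeta)$ acting on a convolution of $\hat f = F/\mu_{r,\delta}$ and $\hat g = G/\mu_{r,\delta}$; this is the bridge I will use in both directions.

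For the forward implication, assume the multiplicative estimate and fix nonnegative $F, G, H \in L^2(\R^d;[0,\infty])$. Only the restrictions of $F, G$ to $B(0,1)$ enter $I$, so I define $f, g$ by $\hat f = (F/\mu_{r,\delta})\vchi_{B(0,1)}$ and $\hat g = (G/\mu_{r,\delta})\vchi_{B(0,1)}$. Lemma \ref{low_freq_est} (via $1/\mu_{r,\delta}^2 \asymp 1/\omega_{s,r,\delta}$) shows $\hat f, \hat g \in L^1(B(0,1))$, so $f, g \in X^s_{r,\delta}(\R^d;\F)$ with $\norm{f}_{X^s_{r,\delta}} \asymp \norm{F}_{L^2}$ and $\norm{g}_{X^s_{r,\delta}} \asymp \norm{G}_{L^2}$. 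Since $\hat f, \hat g \ge 0$, the substitution $\zeta = \xi + \eta$ gives the exact identity $I(F,G,H) = \int_{B(0,2)} \mu_{r,\delta}(\zeta) H(\zeta)\, \widehat{fg}(\zeta)\, \d\zeta$. Applying Cauchy-Schwarz in $\zeta$, then the equivalence $\int_{B(0,2)}\mu_{r,\delta}^2\abs{\widehat{fg}}^2 \asymp \norm{fg}_{X^s_{r,\delta}}^2$ (legitimate since $\supp(\widehat{fg})\sseq B(0,2)$), and finally the assumed algebra bound, yields $I(F,G,H) \ls \norm{fg}_{X^s_{r,\delta}}\norm{H}_{L^2} \ls \norm{f}_{X^s_{r,\delta}}\norm{g}_{X^s_{r,\delta}}\norm{H}_{L^2} \ls \norm{F}_{L^2}\norm{G}_{L^2}\norm{H}_{L^2}$.

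For the reverse implication, assume $I$ is bounded on nonnegative inputs and take arbitrary $f, g \in X^s_{r,\delta}(\R^d;\F)$. Using Proposition \ref{thm_fourier_split_estimates} with $R = 1$, split $f = f_l + f_h$ and $g = g_l + g_h$ into low- and high-frequency projections, and write $fg = f_l g_l + f_h g_l + f g_h$. The latter two terms are tame: since $f_h, g_h \in H^s$ and $f, g_l \in X^s_{r,\delta}$, Theorem \ref{ideal} gives $f_h g_l,\ f g_h \in H^s(\R^d;\F)$ with the desired bilinear bound, and the hypothesis $r \le 1$ lets Theorem \ref{thm_Hs_inclusion} embed these continuously back into $X^s_{r,\delta}$. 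The genuinely anisotropic term is $f_l g_l$, whose Fourier transform is supported in $B(0,2)$; by the weight equivalence above and duality against nonnegative $H \in L^2$ with $\norm{H}_{L^2} = 1$, the triangle inequality inside the convolution gives $\int_{B(0,2)} \mu_{r,\delta}(\zeta)\abs{\widehat{f_l g_l}(\zeta)} H(\zeta)\,\d\zeta \le I(\tilde F, \tilde G, H)$, where $\tilde F = \mu_{r,\delta}\abs{\hat f}\vchi_{B(0,1)}$ and $\tilde G = \mu_{r,\delta}\abs{\hat g}\vchi_{B(0,1)}$ satisfy $\norm{\tilde F}_{L^2} \asymp \norm{f_l}_{X^s_{r,\delta}} \le \norm{f}_{X^s_{r,\delta}}$ and likewise for $\tilde G$. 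Boundedness of $I$, followed by taking the supremum over $H$, then controls $\norm{f_l g_l}_{X^s_{r,\delta}}$, and summing the three pieces gives the algebra estimate.

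The routine parts are the mollification and band-limited bookkeeping together with the weight comparisons; the conceptual crux is the clean separation achieved by the high/low decomposition, wherein every interaction except $f_l g_l$ is absorbed by the $H^s$-ideal structure of Theorem \ref{ideal} and the embedding $H^s \hookrightarrow X^s_{r,\delta}$, leaving the purely low-frequency, genuinely anisotropic interaction as the sole content of the reduced functional $I$. The point requiring the most care is verifying that the single numerator weight $\mu_{r,\delta}(\xi+\eta)$ faithfully reproduces the full $X^s_{r,\delta}$ norm of the low-low product across the transition annulus $B(0,2)\sm B(0,1)$, since there the defining multiplier switches from $\omega_{s,r,\delta}$ to $\langle\cdot\rangle^{2s}$; this is exactly why the equivalence $\mu_{r,\delta} \asymp \langle\cdot\rangle^s \asymp 1$ on that annulus is recorded at the outset.
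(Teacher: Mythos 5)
Your proof is correct and follows essentially the same route as the paper's: the same low/high frequency decomposition with the cross terms absorbed by Theorem \ref{ideal} and the embedding $H^s(\R^d;\F) \hookrightarrow X^s_{r,\delta}(\R^d;\F)$ of Theorem \ref{thm_Hs_inclusion}, and the same convolution identity pulling the weight $\mu_{r,\delta}$ through $\hat f \ast \hat g$ to link the low-low product to $I$ in both directions. The only deviation is a minor streamlining: by placing absolute values inside the convolution and using the nonnegative-duality characterization of the $L^2$ norm, you bypass the paper's detour through pairing with Schwartz functions, the complex-valued extension of Lemma \ref{pos}, and the Riesz representation theorem.
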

	\begin{proof}
		First, suppose that $I$ is bounded and let $f, g \in X^s_{r, \delta}(\R^d;\F)$. We then set $f_0 = \pr{ \vchi_{B(0, 1)} \hat f }\check{} \in C^\infty_0(\R^d;\F)$ and $f_1 = \pr{ \vchi_{B(0, 1)\sc} \hat f }\check{} = f - f_0 \in H^s(\R^d;\F)$.  We define $g_0$ and $g_1$ similarly for $g$.  We then have 
		\begin{equation}
			fg = f_0 g_0 + f_0 g_1 + f_1 g_0 + f_1 g_1.
		\end{equation}
		By  Theorem \ref{thm_Hs_inclusion}, Proposition \ref{ideal},   and the fact that $H^s(\R^d;\F)$ is an algebra, whenever $i + j \ge 1$ we have that $f_i g_j$ is supported in $B(0, 1)\sc$, is in $H^s\hookrightarrow X^s_{r,\delta}$ and
		\begin{equation}
			\norm*{f_i g_j}_{X^s_{r, \delta}} \asymp \norm*{f_i g_j}_{H^s} \lesssim \norm*{f_i}_{X^s_{r, \delta}}\norm*{g_j}_{X^s_{r, \delta}} \leq \norm*{f}_{X^s_{r, \delta}}\norm*{g}_{X^s_{r, \delta}}.
		\end{equation}
		Thus, it only remains to analyze $f_0 g_0$.  Theorem \ref{thm_fourier_split_estimates} shows that  $\hat f_0$ and $\hat g_0$ are integrable and supported in $B(0, 1)$, so Young's inequality implies $\hat f_0 \ast \hat g_0 \in L^1(\R^d;\F)$ and $\supp(\hat f_0 \ast \hat g_0) \sseq B(0, 2)$. 
		Let $\ph \in \mathscr \S(\R^d;\F)$. We employ Tonelli's theorem to calculate
		\begin{multline}
			\int_{\R^d} \mu_{r, \delta} \pr{  \hat f_0 \ast \hat g_0} \ph = \int_{\R^d} \int_{\R^d} \mu_{r, \delta}(\xi + \eta) \hat f_0(\xi) \hat g_0(\eta) \ph(\xi + \eta) \; \d\xi\; \d\eta \\
			= \int_{B(0, 1)^2} \mu_{r, \delta}(\xi + \eta) \hat f_0(\xi) \hat g_0(\eta) \ph(\xi + \eta) \; \d\xi\; \d\eta = I(\mu_{r, \delta} \hat f_0, \mu_{r, \delta} \hat f_0, \ph).
		\end{multline}
		By the assumed boundedness of $I$ we have
		\begin{equation}
			\abs*{ \int_{\R^d} \mu_{r, \delta}(\hat f_0 \ast \hat g_0) \ph } \lesssim \norm*{ \mu_{r, \delta} \hat f_0 }_{L^2} \norm*{ \mu_{r, \delta} \hat g_0 }_{L^2} \norm*{\ph}_{L^2} \lesssim \norm*{ f }_{X^s_{r, \delta}} \norm*{ g}_{X^s_{r, \delta}} \norm*{\ph}_{L^2}.
		\end{equation}
		By the density of $\mathscr S$ in $L^2$, we see that the left hand side extends to define a bounded linear functional on $L^2$ obeying the same estimate, and so the Riesz representation theorem tells us that $\mu_{r, \delta}(\hat f_0 \ast \hat g_0) \in L^2(\R^d)$ and
		\begin{equation}
			\norm*{ f }_{X^s_{r, \delta}} \norm*{ g}_{X^s_{r, \delta}}\gtrsim \norm*{ \mu_{r, \delta}(\hat f_0 \ast \hat g_0) }_{L^2} =  \norm*{ \mu_{r, \delta}\widehat{f_0 g_0}  }_{L^2} \asymp \norm*{f_0 g_0}_{X^s_{r, \delta}}
		\end{equation}
		The last bound followed because $\hat f_0 \ast \hat g_0$ is compactly supported. We thus have the desired result.
		
		Conversely, assume that $X^s_{r, \delta}(\R^d)$ is an algebra. Let $F, G, H \in L^2(\R^d; [0, \infty])$ and note that $I(F, G, H) = I(F \vchi_{B(0, 1)}, G \vchi_{B(0, 1)}, H)$ due to the domain of the integral, and the fact that we have $\pr{ \vchi_{B(0, 1)} F/ \mu_{r, \delta} }\check{}$ and $\pr{ \vchi_{B(0, 1)} G/ \mu_{r, \delta} }\check{}$ are both in $X^s_{r, \delta}$. Thus by Cauchy-Schwarz and the boundedness of products in $X^s_{r, \delta}$ we have
		\begin{multline}
			I(F, G, H) = \int_{\R^d} \mu_{r, \delta} \pr{ \pr{ \vchi_{B(0, 1)} F/ \mu_{r, \delta} } \ast \pr{ \vchi_{B(0, 1)} G/ \mu_{r, \delta} } } H 
			\\\le\; \norm*{ \mu_{r, \delta} \pr{ \pr{ \vchi_{B(0, 1)} F/ \mu_{r, \delta} } \ast \pr{ \vchi_{B(0, 1)} G/ \mu_{r, \delta} } } }_{L^2} \norm*{H}_{L^2} 
			= \norm*{ \pr{ \vchi_{B(0, 1)} F/ \mu_{r, \delta} }\check{} \pr{ \vchi_{B(0, 1)} G/ \mu_{r, \delta} }\check{} \;}_{X^s_{r, \delta}} \norm*{H}_{L^2} \\
			\lesssim\; \norm*{ \pr{ \vchi_{B(0, 1)} F/ \mu_{r, \delta} }\check{}\;}_{X^s_{r, \delta}}\norm*{ \pr{ \vchi_{B(0, 1)} G/ \mu_{r, \delta} }\check{} \;}_{X^s_{r, \delta}} \norm*{H}_{L^2} \lesssim \norm*{F}_{L^2} \norm*{G}_{L^2} \norm*{H}_{L^2}
		\end{multline}
		By Lemma \ref{pos}, we then have the desired result.
	\end{proof}
	
	Thus to prove that $X^s_{r, \delta}$ is a Banach algebra, we wish to analyze the boundedness of the functional $I$. To do this, we first split the domain $B(0, 1)^2$ into two sets to get control of $I$ in each independently.
	\begin{definition}
		We partition $B(0, 1)^2$ into two sets $E_0$ and $E_1$ as follows:
		\begin{equation}
			E_0 = \{ (\xi, \eta) \in B(0, 1)^2 \st \abs{\xi} + \abs{\eta} \le 3 \abs{\abs \xi - \abs \eta} \}  \text{ and }
			E_1 = \{ (\xi, \eta) \in B(0, 1)^2 \st \abs{\xi} + \abs{\eta} > 3 \abs{\abs \xi - \abs \eta} \} 
		\end{equation}
		From this we write $I$ as a sum of two operators $I_0$ and $I_1$, where for $i\in \{0, 1\}$ we have
		\begin{equation}
			I_i = \int_{E_i} \fr{\mu_{r, \delta}(\xi + \eta)}{\mu_{r, \delta}(\xi)\mu_{r, \delta}(\eta)} F(\xi) G(\eta) H(\xi + \eta) \; \d \xi \d \eta
		\end{equation}
	\end{definition}
	
	We now analyze the boundedness of $I_0$ and $I_1$ in turn.
	
	\begin{prop}
		$(\xi, \eta) \in E_1 \bimp (\xi, \eta) \in B(0,1)^2$ and $\fr12 \abs\eta < \abs\xi < 2\abs\eta$. Additionally, $(\xi, \eta) \in E_1$ implies $\abs{\xi + \eta} < 3\abs{\eta}$.
	\end{prop}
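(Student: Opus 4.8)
The plan is to reduce the entire statement to an elementary inequality among the two nonnegative scalars $a = \abs\xi$ and $b = \abs\eta$, since both the defining condition of $E_1$ and the target condition $\fr12 \abs\eta < \abs\xi < 2\abs\eta$ depend on $\xi,\eta$ only through these magnitudes. Thus it suffices to show, for all $a, b \ge 0$, that $a + b > 3\abs{a - b}$ if and only if $\fr{b}{2} < a < 2b$; the membership $(\xi,\eta) \in B(0,1)^2$ carries over unchanged on both sides of the equivalence and so can be carried along passively.

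To prove this scalar equivalence, I would split on the sign of $a - b$ to eliminate the inner absolute value. When $a \ge b$ we have $\abs{a-b} = a - b$, and $a + b > 3(a-b)$ rearranges to $4b > 2a$, i.e. $a < 2b$; combined with $a \ge b > \fr{b}{2}$ this is exactly $\fr{b}{2} < a < 2b$. When $a < b$ we have $\abs{a-b} = b - a$, and $a + b > 3(b - a)$ rearranges to $4a > 2b$, i.e. $a > \fr{b}{2}$; combined with $a < b < 2b$ this again gives $\fr{b}{2} < a < 2b$. Running each rearrangement in reverse handles the converse direction, so the two conditions coincide. The only point needing a moment's care is the degenerate case $b = 0$ (equivalently $a = 0$): there $a + b > 3\abs{a-b}$ reduces to $a > 3a$, which fails for $a \ge 0$, matching the empty range $0 < a < 0$ on the other side, so no spurious points are created or lost.

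Finally, for the additional inclusion $\abs{\xi + \eta} < 3\abs\eta$ on $E_1$, I would simply combine the triangle inequality $\abs{\xi + \eta} \le \abs\xi + \abs\eta$ with the bound $\abs\xi < 2\abs\eta$ just established, giving $\abs{\xi+\eta} \le \abs\xi + \abs\eta < 2\abs\eta + \abs\eta = 3\abs\eta$. The whole argument is routine; there is no real obstacle beyond bookkeeping the case split and confirming that the inequalities stay strict, so the main thing to watch is keeping the two sub-cases and the boundary behaviour at $b = 0$ consistent throughout.
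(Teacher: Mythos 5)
Your proof is correct. Note that the paper itself does not argue this proposition at all: it simply cites Lemma 2.10 of the reference \cite{koganemaru_and_tice}, so your write-up supplies a self-contained argument where the paper defers to the literature. Your reduction to the scalar equivalence $a + b > 3\abs{a-b} \iff \tfrac{b}{2} < a < 2b$ for $a = \abs{\xi}$, $b = \abs{\eta}$ is the natural one (both the defining inequality of $E_1$ and the target condition are radial, and the $B(0,1)^2$ membership passes through untouched), the two-case elimination of the absolute value is carried out correctly in both directions, and you rightly flag the degenerate case $b = 0$, which also quietly guarantees $\eta \neq 0$ on $E_1$ so that the strict chain $\abs{\xi+\eta} \le \abs{\xi} + \abs{\eta} < 3\abs{\eta}$ for the additional claim is valid. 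The only thing your approach costs is a few lines of elementary bookkeeping; what it buys is that the proposition no longer rests on an external reference, which is arguably preferable for a statement this short.
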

	\begin{proof}
		The proof for this can be found in Lemma 2.10 of \cite{koganemaru_and_tice}.
	\end{proof}
	
	\begin{lemma}
		If $(\xi, \eta) \in E_0$, then $\mu_{r, \delta}(\xi + \eta) \lesssim \mu_{r, \delta}(\xi) + \mu_{r, \delta}(\eta)$
	\end{lemma}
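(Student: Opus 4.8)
The plan is to run the standard ``high--low'' paraproduct argument, exploiting that on $E_0$ one of the two frequencies strictly dominates the other. Since both the claimed inequality and the defining condition of $E_0$ are symmetric under swapping $\xi$ and $\eta$, I would first assume without loss of generality that $\abs{\eta} \le \abs{\xi}$. The defining inequality $\abs{\xi} + \abs{\eta} \le 3\abs{\abs{\xi} - \abs{\eta}}$ then reads $\abs{\xi}+\abs{\eta} \le 3(\abs{\xi}-\abs{\eta})$, which rearranges to $\abs{\eta} \le \tfrac12\abs{\xi}$. By the triangle inequality this yields $\tfrac12\abs{\xi} \le \abs{\xi}-\abs{\eta} \le \abs{\xi+\eta} \le \abs{\xi}+\abs{\eta} \le \tfrac32\abs{\xi}$, so that $\abs{\xi+\eta} \asymp \abs{\xi}$. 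Consequently $\abs{\xi+\eta}^p \asymp \abs{\xi}^p$ for every fixed real exponent $p$, with implied constants depending only on $p$; note this two-sided magnitude comparison is insensitive to the sign of $p$, so it applies equally to the exponents $\delta-r$ and $-r$ appearing in $\mu_{r,\delta}$.

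Next I would split the numerator of $\mu_{r,\delta}(\xi+\eta)$ into its two constituent pieces, writing $\mu_{r,\delta}(\xi+\eta) = \frac{\abs{(\xi+\eta)_1}}{\abs{\xi+\eta}^r} + \abs{\xi+\eta}^{\delta-r}$, and estimate each separately. The homogeneous piece is immediate from the magnitude comparison: $\abs{\xi+\eta}^{\delta-r} \asymp \abs{\xi}^{\delta-r} \le \mu_{r,\delta}(\xi)$, since $\abs{\xi}^{\delta-r}$ is exactly one of the two nonnegative terms comprising $\mu_{r,\delta}(\xi)$. For the anisotropic piece I would combine the coordinatewise triangle inequality $\abs{(\xi+\eta)_1} \le \abs{\xi_1}+\abs{\eta_1}$ with $\abs{\xi+\eta}^{-r} \asymp \abs{\xi}^{-r}$ to reduce matters to controlling $\frac{\abs{\xi_1}}{\abs{\xi}^r} + \frac{\abs{\eta_1}}{\abs{\xi}^r}$. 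The first summand is bounded directly, being $\le \mu_{r,\delta}(\xi)$.

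The main obstacle is the remaining cross term $\frac{\abs{\eta_1}}{\abs{\xi}^r}$, in which the \emph{small} frequency's first coordinate is divided by the \emph{large} frequency's magnitude rather than its own. The natural way to absorb it is into $\mu_{r,\delta}(\eta) \ge \frac{\abs{\eta_1}}{\abs{\eta}^r}$, which requires comparing $\abs{\xi}^{-r}$ with $\abs{\eta}^{-r}$; since $\abs{\eta} \le \abs{\xi}$, the estimate $\frac{\abs{\eta_1}}{\abs{\xi}^r} \le \frac{\abs{\eta_1}}{\abs{\eta}^r} \le \mu_{r,\delta}(\eta)$ holds precisely when $t \mapsto t^r$ is nondecreasing, i.e. when $r \ge 0$. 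This monotonicity is the crux of the argument and the one place where the sign of $r$ genuinely enters: when $\xi$ is nearly orthogonal to $e_1$ (so $\mu_{r,\delta}(\xi) \asymp \abs{\xi}^{\delta-r}$ is small) while $\eta$ points along $e_1$, the factor $\abs{\xi}^{-r}$ can amplify $\abs{\eta_1}$ if $r<0$, and so this step must be handled with care. Assuming this comparison, assembling the three bounds gives $\mu_{r,\delta}(\xi+\eta) \lesssim \mu_{r,\delta}(\xi) + \mu_{r,\delta}(\eta)$ and completes the proof.
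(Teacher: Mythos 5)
Your argument is correct and is essentially the paper's proof in lightly repackaged form: the paper keeps the roles of $\xi$ and $\eta$ symmetric, using the reverse triangle inequality together with the $E_0$ condition to get $\abs{\xi+\eta}^{-r} \lesssim (\abs{\xi}+\abs{\eta})^{-r}$ and then absorbing the numerator term by term, which is exactly your WLOG $\abs{\eta} \le \tfrac12\abs{\xi}$, $\abs{\xi+\eta} \asymp \abs{\xi}$ computation in different clothing. The monotonicity requirement $r \ge 0$ that you correctly flag at the cross term $\abs{\eta_1}/\abs{\xi}^r$ is also used tacitly in the paper's proof (in the steps $\abs{\xi+\eta}^r \ge \abs[\big]{\abs{\xi}-\abs{\eta}}^r$ and $(\abs{\xi}+\abs{\eta})^r \ge \abs{\eta}^r$), so your proof is complete at the same level of rigor as the paper's own.
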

	\begin{proof}
		We note since $\delta - r \ge 0$, we have that $\pr{\abs \xi + \abs \eta}^{\delta - r}  \lesssim \abs \xi^{\delta - r} + \abs \eta^{\delta - r}$. Using this, the triangle inequality, and the estimate from the definition of $E_0$, we have
		\begin{multline}\mu_{r, \delta}(\xi+  \eta) = \frac{\abs*{\xi_1 + \eta_1}}{\abs*{\xi + \eta}^r} + \abs*{\xi + \eta}^{\delta - r}  \leq \frac{\abs*{\xi_1} + \abs*{\eta_1}}{\abs*{\abs*\xi - \abs\eta}^r} + \pr{\abs \xi + \abs \eta}^{\delta - r} \\ \lesssim \frac{\abs*{\xi_1} + \abs*{\eta_1}}{\pr{\abs{\xi} + \abs \eta}^r} + \abs \xi^{\delta - r} + \abs \eta^{\delta - r} \leq \mu_{r, \delta}(\xi) + \mu_{r, \delta}(\eta) \end{multline}
	\end{proof}
	
	\begin{prop}\label{E0good}
		$I_0(F, G, H) \lesssim \norm{1/\mu_{r, \delta}}_{L^2} \norm{F}_{L^2} \norm{G}_{L^2} \norm{H}_{L^2}$
	\end{prop}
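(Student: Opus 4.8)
The plan is to exploit the kernel bound just established on $E_0$ in order to reduce the trilinear estimate to two applications of Young's convolution inequality. Since the preceding lemma gives $\mu_{r, \delta}(\xi + \eta) \lesssim \mu_{r, \delta}(\xi) + \mu_{r, \delta}(\eta)$ on $E_0$, the kernel of $I_0$ satisfies $\mu_{r, \delta}(\xi + \eta)/(\mu_{r, \delta}(\xi)\mu_{r, \delta}(\eta)) \lesssim 1/\mu_{r, \delta}(\xi) + 1/\mu_{r, \delta}(\eta)$ there. Because $F, G, H$ are nonnegative and $E_0 \subseteq B(0,1)^2$, I would first discard the constraint defining $E_0$ and bound $I_0(F, G, H)$ by the sum of two integrals over $B(0,1)^2$, one carrying the weight $1/\mu_{r, \delta}(\eta)$ and the other $1/\mu_{r, \delta}(\xi)$.

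These two pieces are interchanged by swapping $\xi$ and $\eta$, so it suffices to estimate one of them, say the piece weighted by $1/\mu_{r, \delta}(\eta)$. The key observation is that, after the substitution $\zeta = \xi + \eta$, this integral becomes $\int_{\R^d} (\tilde G \ast F)(\zeta)\, H(\zeta)\, \d\zeta$, where $\tilde G = (G/\mu_{r, \delta})\vchi_{B(0,1)}$; all manipulations are justified by Tonelli's theorem since the integrands are nonnegative. I would then note that $\tilde G \in L^1$ with $\norm{\tilde G}_{L^1} \le \norm{1/\mu_{r, \delta}}_{L^2}\norm{G}_{L^2}$ by Cauchy--Schwarz, where finiteness of $\norm{1/\mu_{r, \delta}}_{L^2(B(0,1))}$ is exactly the content of Lemma \ref{low_freq_est} under the hypothesis $d > 1 + \delta - 2r$ (recall $\mu_{r, \delta}^2 \asymp \omega_{s, r, \delta}$ on the unit ball). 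Young's inequality then gives $\norm{\tilde G \ast F}_{L^2} \le \norm{\tilde G}_{L^1}\norm{F}_{L^2}$, and a final Cauchy--Schwarz against $H$ produces the bound $\norm{1/\mu_{r, \delta}}_{L^2}\norm{F}_{L^2}\norm{G}_{L^2}\norm{H}_{L^2}$.

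Adding the two symmetric contributions yields the claim. I do not expect a serious obstacle here: the only points requiring care are verifying that $G/\mu_{r, \delta}$ is genuinely integrable on $B(0,1)$ (which rests on Lemma \ref{low_freq_est}, hence on the completeness threshold $d > 1 + \delta - 2r$) and confirming that the change of variables correctly identifies the inner integral as a convolution of $\tilde G$ with $F$. The real work was already carried out in the preceding kernel lemma; the present proposition is essentially its corollary, with Young's inequality converting the separated weight $1/\mu_{r, \delta}(\xi) + 1/\mu_{r, \delta}(\eta)$ into a clean product of $L^2$ norms.
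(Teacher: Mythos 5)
Your proof is correct and follows essentially the same route as the paper, which simply defers to Proposition 2.12 of \cite{koganemaru_and_tice}: use the kernel bound $\mu_{r,\delta}(\xi+\eta) \lesssim \mu_{r,\delta}(\xi) + \mu_{r,\delta}(\eta)$ on $E_0$ to split the weight into $1/\mu_{r,\delta}(\xi) + 1/\mu_{r,\delta}(\eta)$, then treat each symmetric piece as $\int (\tilde{G} \ast F) H$ and conclude via Cauchy--Schwarz and Young's inequality, with Lemma \ref{low_freq_est} supplying $1/\mu_{r,\delta} \in L^2(B(0,1))$ since $\mu_{r,\delta}^2 \asymp \omega_{s,r,\delta}$ on the unit ball. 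There are no gaps; your handling of Tonelli and the restriction $\chi_{B(0,1)}$ is exactly what the cited argument does.
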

	\begin{proof}
		The proof is identical to that of Proposition 2.12 in  \cite{koganemaru_and_tice}.
	\end{proof}
	
	\subsection{Splitting $E_1$ further}
	
	We now aim to get more control of $I_1$. To accomplish this, we localize further in $E_1$.
	
	\begin{definition}
		Suppose we have $m, n \in \N$. Then we define
		\begin{equation}
			E_{m, n} = \{(\xi, \eta) \in E_1 \st 2^{-m - 1} < \abs{\xi} \le 2^{-m} \text{ and } 2^{-n +1} < \abs{\xi + \eta} \le 2^{-n + 2}\}.
		\end{equation}
		Similar to before, we also define
		\begin{equation}
			I_{m, n} = \int_{E_{m, n}} \fr{\mu_{r, \delta}(\xi + \eta)}{\mu_{r, \delta}(\xi)\mu_{r, \delta}(\eta)} F(\xi) G(\eta) H(\xi + \eta)\; \d \xi \d \eta
		\end{equation}
		In addition, we define the annulus $A = B[0, 4] \sm B(0, 1/2)$ and set
		\begin{equation}
			F_m = F \vchi_{2^{-m - 1}A}, G_m = G \vchi_{2^{-m - 1}A}, H_n = H \vchi_{2^{-n} A}. 
		\end{equation}
		for $F, G, H \in L^2(\R^d, [0, \infty])$.
	\end{definition}
	
	\begin{lemma}
		We have
		\begin{equation}
			\bigcup_{m = 0}^\infty \bigcup_{n = m}^\infty E_{m, n} = E_1 \text{ and } \sum_{m = 0}^\infty \sum_{n = m}^\infty I_{m, n} = I_1
		\end{equation}
	\end{lemma}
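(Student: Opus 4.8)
The plan is to prove both assertions at once by exhibiting $\{E_{m,n}\}_{m\ge 0,\, n\ge m}$ as a partition of $E_1$ up to a Lebesgue-null set; the identity for the integrals then drops out from the nonnegativity of the integrand together with countable additivity. The core of the argument is a dyadic index selection: given $(\xi,\eta)\in E_1$ I would produce the unique pair $(m,n)$ with $(\xi,\eta)\in E_{m,n}$ and verify that it always obeys $n\ge m$, so that it is indeed captured by the stated range of summation.

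For the selection, observe that any $(\xi,\eta)\in E_1$ has $\xi\ne 0$ and $\eta\ne 0$ (the characterization $\tfrac12\abs{\eta}<\abs{\xi}<2\abs{\eta}$ from the preceding Proposition forces both to be nonzero) and $\abs{\xi}<1$ since $\xi\in B(0,1)$. As the intervals $(2^{-m-1},2^{-m}]$ partition $(0,1]$, there is a unique $m\in\N$ with $2^{-m-1}<\abs{\xi}\le 2^{-m}$. For $n$ I would use $\abs{\xi+\eta}\le\abs{\xi}+\abs{\eta}<3\abs{\xi}<3$, which places $\abs{\xi+\eta}$ in $(0,4]$; since the intervals $(2^{-n+1},2^{-n+2}]$ partition $(0,4]$ as $n$ ranges over $\N$, there is a unique $n\in\N$ with $2^{-n+1}<\abs{\xi+\eta}\le 2^{-n+2}$, provided $\xi+\eta\ne 0$. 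The only points of $E_1$ admitting no such $n$ are those with $\eta=-\xi$, which form a measure-zero set in $\R^d\times\R^d$ and are harmless both for the union (interpreted up to null sets) and for the integral identity.

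To pin down the range $n\ge m$, I would combine the lower bound $2^{-n+1}<\abs{\xi+\eta}$ with the upper bound $\abs{\xi+\eta}<3\abs{\xi}\le 3\cdot 2^{-m}$ already in hand, giving $2^{m-n}<3/2<2$ and hence $m-n\le 0$ by integrality. This yields $E_1\subseteq\bigcup_{m=0}^\infty\bigcup_{n=m}^\infty E_{m,n}$ up to the null set above, while the reverse inclusion is immediate from $E_{m,n}\subseteq E_1$. The pieces are pairwise disjoint because $m$ is fixed by the dyadic shell containing $\abs{\xi}$ and $n$ by the one containing $\abs{\xi+\eta}$, so distinct $(m,n)$ impose incompatible constraints. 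Since $F,G,H\ge 0$ and $\mu_{r,\delta}\ge 0$ render the integrand of $I_1$ a nonnegative measurable function, countable additivity over this disjoint cover (equivalently Tonelli) gives $I_1=\sum_{m=0}^\infty\sum_{n=m}^\infty I_{m,n}$, with both sides permitted to be $+\infty$. I expect no genuine obstacle here: the only points requiring care are the measure-zero locus $\{\xi+\eta=0\}$ and the bookkeeping needed to confirm $n\ge m$.
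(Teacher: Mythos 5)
Your proof is correct, and it is essentially the argument the paper invokes by reference: the paper's own ``proof'' is a one-line citation to Lemma 2.14 of \cite{koganemaru_and_tice}, and what you have written is exactly the standard dyadic-selection argument underlying it --- fix $m$ by the dyadic shell containing $\abs{\xi}$, fix $n$ by the shell containing $\abs{\xi+\eta}$, use $\abs{\xi+\eta}<3\abs{\xi}$ (which follows from $\abs{\eta}<2\abs{\xi}$ on $E_1$) to force $2^{m-n}<3/2$ and hence $n\ge m$, and then get the integral identity from disjointness of the shells and Tonelli applied to the nonnegative integrand. One place where you are in fact more careful than the statement itself: you correctly note that points with $\xi+\eta=0$ do lie in $E_1$ (the defining inequality there reads $2\abs{\xi}>0$) yet belong to no $E_{m,n}$, since each $E_{m,n}$ requires $\abs{\xi+\eta}>2^{-n+1}$; so the displayed set equality is literally false on the Lebesgue-null set $\{\eta=-\xi\}$ and must be read modulo null sets, which is harmless for the identity $\sum_{m}\sum_{n\ge m} I_{m,n}=I_1$ --- the only content used later in the paper.
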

	\begin{proof}
		The proof is identical to that of Lemma 2.14 in \cite{koganemaru_and_tice}.
	\end{proof}
	
	We now come to our first major bound. For large $d$, we will see that in fact we can get control of $I_1$, thus proving that $X^s_{r, \delta}$ is an algebra. For small dimension, we will need to do some further splitting.
	
	\begin{prop}\label{largedlargedelta}
		Let  $r \le 1$.   Let $F, G, H \in L^2(\R^d, [0, \infty])$.  The following hold.
		\begin{enumerate}
			\item If $d \ge 4\delta - 2r - 2$, then 
			\begin{equation}
				I_1(F, G, H) \lesssim \norm{F}_{L^2}\norm{G}_{L^2}\norm{H}_{L^2}
			\end{equation}
			\item If $d < 4\delta - 2r - 2$, then 
			\begin{equation}
				\sum_{m = 0}^\infty \sum_{n > km}^\infty I_{m,n}(F, G, H) \lesssim \norm{F}_{L^2}\norm{G}_{L^2}\norm{H}_{L^2}
			\end{equation}
			for $k = \fr{2(\delta - r)}{1 - r + d/2}$.
		\end{enumerate}
	\end{prop}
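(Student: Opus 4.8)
The plan is to prove a single master estimate for each dyadic piece $I_{m,n}$ and then sum, the two parts of the proposition differing only in the range of indices $(m,n)$ over which the resulting series converges. Concretely, I would show that on $E_{m,n}$ the integrand splits into a pointwise multiplier bound times a convolution controlled by the localized pieces $F_m,G_m,H_n$, yielding
\begin{equation}
I_{m,n}(F,G,H) \lesssim 2^{-n(1 - r + d/2)}\, 2^{2m(\delta - r)}\, \norm{F_m}_{L^2}\norm{G_m}_{L^2}\norm{H_n}_{L^2}.
\end{equation}
The whole problem then reduces to summing this bound over $\{(m,n): n \ge m\}$ in part (1) and over $\{(m,n): n > km\}$ in part (2).

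For the multiplier, I would use the preceding proposition, which gives $\abs{\xi} \asymp \abs{\eta} \asymp 2^{-m}$ and $\abs{\xi+\eta}\asymp 2^{-n}$ on $E_{m,n}$. Since $\delta > 1$, the numerator of $\mu_{r,\delta}(\xi+\eta)$ obeys $\abs{\xi_1 + \eta_1} + \abs{\xi+\eta}^\delta \le \abs{\xi+\eta} + \abs{\xi+\eta}^\delta \lesssim 2^{-n}$, so $\mu_{r,\delta}(\xi+\eta)\lesssim 2^{-n(1-r)}$; and dropping the first-coordinate terms in the denominators gives $\mu_{r,\delta}(\xi)\mu_{r,\delta}(\eta)\gtrsim \abs{\xi}^{\delta-r}\abs{\eta}^{\delta-r}\asymp 2^{-2m(\delta-r)}$, whence the ratio is $\lesssim 2^{-n(1-r)}2^{2m(\delta-r)}$. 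For the convolution factor, I would note that on $E_{m,n}$ one has the pointwise domination $F(\xi)\le F_m(\xi)$, $G(\eta)\le G_m(\eta)$, and $H(\xi+\eta)\le H_n(\xi+\eta)$, then change variables $\zeta = \xi+\eta$ to write $\int F_m(\xi)G_m(\zeta-\xi)H_n(\zeta)\,\d\xi\,\d\zeta = \int (F_m * G_m)\,H_n$. The crucial point is to estimate this using $\norm{F_m*G_m}_{L^\infty}\le \norm{F_m}_{L^2}\norm{G_m}_{L^2}$ together with the measure of the support of $H_n$, namely $\abs{2^{-n}A}\asymp 2^{-nd}$, which produces the gain $2^{-nd/2}$. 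This is the step I expect to be the crux: exploiting the \emph{smaller} support of $H_n$ (size $2^{-n}$) rather than that of $F_m,G_m$ (size $2^{-m}$) is exactly what pushes the series to the sharp threshold and, in particular, what allows the boundary case $r=1$ in part (1).

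It then remains to sum. The annuli $2^{-m-1}A$ have bounded overlap, so $\sum_m\norm{F_m}_{L^2}^2\lesssim\norm{F}_{L^2}^2$ and likewise for $G$ and $H$; writing $\alpha_m=\norm{F_m}_{L^2}$, $\beta_m=\norm{G_m}_{L^2}$, $\gamma_n=\norm{H_n}_{L^2}$ and $c_{m,n}=2^{-n(1-r+d/2)}2^{2m(\delta-r)}$, these are $\ell^2$ sequences with norms controlled by $\norm{F}_{L^2},\norm{G}_{L^2},\norm{H}_{L^2}$. Because $r\le 1$ and $d\ge 2$ force $1 - r + d/2 \ge 1 > 0$, for fixed $m$ Cauchy--Schwarz in $n$ gives $\sum_{n\ge m}c_{m,n}\gamma_n\le (\sum_{n\ge m}c_{m,n}^2)^{1/2}\norm{\gamma}_{\ell^2}$, and the geometric series evaluates to $\sum_{n\ge m}c_{m,n}^2\asymp 2^{2m(2\delta - r - 1 - d/2)}$. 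For part (1) the hypothesis $d\ge 4\delta - 2r - 2$ says exactly that this exponent is nonpositive, so the quantity is bounded uniformly in $m$, and $I_1 \le (\sup_m\sum_{n\ge m}c_{m,n}^2)^{1/2}\norm{\gamma}_{\ell^2}\sum_m \alpha_m\beta_m \lesssim \norm{F}_{L^2}\norm{G}_{L^2}\norm{H}_{L^2}$ after the final Cauchy--Schwarz $\sum_m\alpha_m\beta_m\le\norm{\alpha}_{\ell^2}\norm{\beta}_{\ell^2}$. For part (2), where $d<4\delta-2r-2$ makes that exponent positive and the uniform bound fail, I would instead restrict to $n>km$; the geometric series then starts near $n\approx km$ and gives $\sum_{n>km}c_{m,n}^2\asymp 2^{2m(2(\delta-r)-k(1-r+d/2))}$, whose exponent is nonpositive precisely when $k\ge \frac{2(\delta-r)}{1-r+d/2}$. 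Taking $k$ equal to this value makes the sum over $n>km$ uniformly bounded in $m$, and the identical two Cauchy--Schwarz steps conclude. The residual near-diagonal range $m\le n\le km$, which this argument cannot control for small $d$, is what necessitates the further splitting announced just before the proposition.
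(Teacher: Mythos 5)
Your proposal is correct, and its overall skeleton coincides with the paper's: the same multiplier bounds $\mu_{r,\delta}(\xi+\eta)\lesssim 2^{-n(1-r)}$ and $\mu_{r,\delta}(\xi)\mu_{r,\delta}(\eta)\gtrsim 2^{-2m(\delta-r)}$ on $E_{m,n}$, the same master estimate $I_{m,n}\lesssim 2^{2m(\delta-r)-n(1-r+d/2)}\norm{F_m}_{L^2}\norm{G_m}_{L^2}\norm{H_n}_{L^2}$, and the same two-stage Cauchy--Schwarz summation using bounded overlap of the dyadic annuli, with the identical exponent bookkeeping in both parts (including the correct observation that $k>1$ under the part-(2) hypothesis, so $n>km$ refines $n\ge m$). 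The one genuinely different ingredient is how you extract the gain $2^{-nd/2}$ from the convolution factor: the paper tiles frequency space by cubes $Q_\ell$ of side $2^{-n}$, notes that $\xi\in Q_\ell$ forces $\eta$ into a fixed dilate $\tilde Q_\ell$, and sums with Cauchy--Schwarz over the lattice, the factor $2^{-nd/2}$ arising as $\norm{\vchi_{Q_0}}_{L^2}$; you instead bound $\int (F_m*G_m)H_n \le \norm{F_m*G_m}_{L^\infty}\norm{H_n}_{L^1}\le \norm{F_m}_{L^2}\norm{G_m}_{L^2}\norm{H_n}_{L^2}\abs{2^{-n}A}^{1/2}$, which is strictly more elementary and yields the identical bound, since the only structural fact being exploited is that the output frequency $\xi+\eta$ lives at the smaller scale $2^{-n}$. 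What the paper's heavier tiling argument buys is not visible in this proposition but in the next one (the range $m\le n\le km$ you correctly defer): there one must tile by \emph{anisotropic} rectangles $R_{p,\pi}$ of dimensions $2^{-\delta m}\times 2^{-n}\times\cdots\times 2^{-n}$ to capture the extra decay $1/\abs{p}+1/\abs{q}$ in the first frequency coordinate, and your $L^\infty$-convolution shortcut, which discards all directional information, would be lossy there. For the statement at hand, your argument is complete and arguably cleaner.
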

	\begin{proof}
		Let $m, n \in \N$ and $n \ge m$. Then we have
		\begin{equation}
			I_{m, n}(F, G, H) \leq I_1(F_m, G_m, H_n) = \int_{E_1} \fr{\mu_{r, \delta}(\xi + \eta)}{\mu_{r, \delta}(\xi)\mu_{r, \delta}(\eta)} F_m(\xi) G_m(\eta) H_n(\xi + \eta)\; \d \xi \d \eta.
		\end{equation}
		The right hand integral vanishes except when $2^{-m - 2} \le \abs \xi$, $\abs \eta \le 2^{-m + 1}$, and $2^{-n - 1} \le \abs{\xi + \eta} \le 2^{-n + 2}$. Thus we have the estimates
		\begin{equation}
			\mu_{r, \delta}(\xi) = \fr{\abs{\xi_1} + \abs{\xi}^\delta}{\abs{\xi}^r} \geq \abs{\xi}^{\delta - r} \gtrsim 2^{-m(\delta - r)},
		\end{equation}
		and similarly $\mu_{r, \delta}(\eta) \gtrsim 2^{-m(\delta - r)}$. Furthermore, we get
		\begin{equation}
			\mu_{r, \delta}(\xi + \eta) = \fr{\abs{\xi_1 + \eta_1} + \abs{\xi + \eta}^\delta}{\abs{\xi + \eta}^r} \le \fr{\abs{\xi + \eta} + \abs{\xi + \eta}^\delta}{\abs{\xi + \eta}^r} \lesssim \fr{2^{-n} + 2^{-n\delta}}{2^{-nr}} \lesssim 2^{-n(1 - r)}.
		\end{equation}
		Thus combining the estimates, we get 
		\begin{equation}
			\fr{\mu_{r, \delta}(\xi + \eta) }{\mu_{r, \delta}(\xi)\mu_{r, \delta}(\eta)} \lesssim \fr{2^{-n(1- r)}}{ 2^{-2m(\delta - r) }} = 2^{2m(\delta - r) - n(1 - r)},
		\end{equation}
		and so we find
		\begin{equation}
			I_{m, n}(F, G, H) \lesssim 2^{2m(\delta - r) - n(1 - r)} \int_{E_1} F_m(\xi) G_m(\eta) H_n(\xi + \eta) \; \d \xi \; \d \eta.
		\end{equation}
		For $\ell \in \Z^d$, let $Q_\ell$ be the closed cube centered at $2^{-n}\ell$ of side length $2^{-n}$ and $\tilde Q_\ell$ denote the closed cube centered at $-2^{-n} \ell$ of side length $9\cdot 2^{-n}$. We note then that 
		\begin{equation}
			\max_{\ell \in \Z^d} \norm*{ \vchi_{Q_\ell} }_{L^2} = \norm*{\vchi_{Q_0}}_{L^2} = 2^{-nd/2}.
		\end{equation}
		Note if $(\xi, \eta) \in E_1$ and $\xi \in Q_\ell$, then
		\begin{equation}
			\abs*{ \eta + 2^{-n}\ell }_\infty \le \abs*{\eta + \xi} + \abs*{-\xi + 2^{-n}\ell}_{\infty} \le 2^{-n + 2} + 2^{-n - 1} = (9/2) 2^{-n},
		\end{equation}
		and so $\eta \in \tilde Q_\ell$. Thus we compute
		\begin{multline}
			\int_{E_1} F_m(\xi) G_m(\eta) H_n(\xi + \eta) \; \d \xi \; \d \eta  
			\le \sum_{ \ell \in \Z^d} \int_{B(0, 1)^2} \pr{ F_m \vchi_{Q_\ell} }(\xi) \pr{G_m \vchi_{\tilde Q_\ell}}(\eta) H_n(\xi + \eta)\;\d\xi\;\d\eta \\
			\le 
			\sum_{\ell \in \Z^d} \int_{B(0, 1)} \pr{F_m \vchi_{Q_\ell}}(\xi) \norm*{ G_m \vchi_{\tilde Q_\ell} }_{L^2} \norm*{H_n}_{L^2} \;\d\xi 
			\le 
			\norm*{H_n}_{L^2} \norm*{\vchi_{Q_0}}_{L^2} \sum_{\ell \in \Z^d} \norm*{ F_n \vchi_{Q_\ell} }_{L^2} \norm*{ G_m \vchi_{\tilde Q_\ell} }_{L^2} \\
			\le 
			2^{-nd/2} \norm*{H_n}_{L^2} \pr{ \int_{\R^d} \abs*{F_m(\xi)}^2 \sum_{\ell \in \Z^d} \vchi_{Q_l}(\xi)\; \d\xi  }^{1/2} \pr{ \int_{\R^d} \abs*{G_m(\eta)}^2 \sum_{\ell \in \Z^d} \vchi_{\tilde Q_l}(\eta)\; \d\eta } \\
			\lesssim 2^{-nd/2} \norm*{F_m}_{L^2} \norm*{G_m}_{L^2} \norm*{H_n}_{L^2}.
		\end{multline}
		Synthesizing these, we obtain 
		\begin{equation}
			I_{m, n}(F, G, H) \lesssim 2^{2m(\delta - r) - n(1 - r + d/2)}.
		\end{equation}
		
		Now we break to cases based on dimension. If $d \ge 4\delta -2r - 2$, then we simply bound
		\begin{multline}
			\sum_{m \ge 0} \sum_{n \ge m} I_{m, n}(F, G, H) \lesssim \sum_{m \ge 0} \sum_{n \ge m} 2^{2m(\delta - r) - n(1 - r + d/2)}  \norm{F_m}_{L^2}\norm{G_m}_{L^2}\norm{H_n}_{L^2} \\
			\le \sum_{m = 0}^\infty 2^{2m(\delta - r)}  \norm{F_m}_{L^2}\norm{G_m}_{L^2}\pr{\sum_{n = m}^\infty 2^{-n(2 - 2r + d)}}^{1/2} \pr{\int \abs{H}^2 \sum_{n = m}^\infty \vchi_{2^{-n} A} }^{1/2} \\
			\lesssim \norm{H}_{L^2} \sum_{m = 0}^\infty 2^{m(2\delta - r - 1 - d/2)} \norm{F_m}_{L^2}\norm{G_m}_{L^2} \lesssim \norm{F}_{L^2}\norm{G}_{L^2}\norm{H}_{L^2}.
		\end{multline}
		On the other hand, if $d < 4\delta -2r - 2$, then we split further: for $k = \fr{2(\delta - r)}{1 - r + d/2}$ an analogous argument shows that
		\begin{equation}
			\sum_{m \ge 0} \sum_{n \ge km} I_{m, n}(F, G, H) \lesssim \norm{H}_{L^2} \sum_{m = 0}^\infty 2^{m(2\delta - 2r - k + kr - kd/2)} \norm{F_m}_{L^2}\norm{G_m}_{L^2} \lesssim \norm{F}_{L^2}\norm{G}_{L^2}\norm{H}_{L^2}.
		\end{equation}
		
	\end{proof}
	
	We now prove our final bound when $d$ is small.
	
	\begin{prop}\label{smalldlargedelta}
		Suppose that $r \le 1$ and $d < 4\delta - 2r - 2$, and if $d =2$ further suppose that $\delta \le 2$.  For $F, G, H \in L^2(\R^d, [0, \infty])$ we have the estimate
		\begin{equation}\label{bigsum}
			\sum_{m \ge 0} \sum_{m \le n \le km} I_{m, n}(F, G, H) \lesssim \norm{F}_{L^2}\norm{G}_{L^2}\norm{H}_{L^2},
		\end{equation}
		where $k = \fr{2(\delta - r)}{1 - r + d/2}$.
	\end{prop}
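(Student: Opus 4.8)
The plan is to supply the one estimate still missing for the boundedness of $I$: a bound on $I_{m,n}$ that can be summed over the triangle $m \le n \le km$, where the estimate from the proof of Proposition~\ref{largedlargedelta}, namely $I_{m,n} \lesssim 2^{2m(\delta-r)-n(1-r+d/2)}\norm{F_m}_{L^2}\norm{G_m}_{L^2}\norm{H_n}_{L^2}$, is useless because its exponent is nonnegative throughout this region (indeed $n \le km$ is exactly where $2m(\delta-r)\ge n(1-r+d/2)$). First I would record the pointwise behavior of the multiplier on $E_{m,n}$: since $\abs\xi \asymp \abs\eta \asymp 2^{-m}$ and $\abs{\xi+\eta}\asymp 2^{-n}$, one has $\mu_{r,\delta}(\xi)\asymp 2^{mr}(\abs{\xi_1}+2^{-m\delta})$, similarly for $\eta$, and $\mu_{r,\delta}(\xi+\eta)\asymp 2^{nr}(\abs{\xi_1+\eta_1}+2^{-n\delta})$. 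The key algebraic observation is that, using $\abs{\xi_1+\eta_1}\le \abs{\xi_1}+\abs{\eta_1}$ together with $2^{-n\delta}\le 2^{-m\delta}$ (valid since $n\ge m$), the kernel splits into two one-variable pieces:
\[
\frac{\mu_{r,\delta}(\xi+\eta)}{\mu_{r,\delta}(\xi)\,\mu_{r,\delta}(\eta)} \lesssim 2^{(n-2m)r}\left( \frac{1}{\abs{\xi_1}+2^{-m\delta}} + \frac{1}{\abs{\eta_1}+2^{-m\delta}} \right).
\]

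The technical heart is to rerun the cube-decomposition argument of Proposition~\ref{largedlargedelta} on each half while carrying the singular first-coordinate weight. By symmetry I treat the $\eta$-term. Tiling by the cubes $Q_\ell,\tilde Q_\ell$ of side $\asymp 2^{-n}$ as before, the weight $(\abs{\eta_1}+2^{-m\delta})^{-1}$ is comparable on $\tilde Q_\ell$ to the constant $\lambda_\ell \asymp (2^{-n}\abs{\ell_1}+2^{-m\delta})^{-1}$; pulling this constant out cube by cube and applying Cauchy--Schwarz over $\ell$ exactly as in the previous proof, the only new feature is the factor $\sup_\ell \lambda_\ell$. The decisive point is that the cube scale $2^{-n}$ regularizes the singularity, so $\sup_\ell \lambda_\ell \asymp (2^{-n}+2^{-m\delta})^{-1}\asymp 2^{\min\{n,m\delta\}}$, which beats the trivial bound $2^{m\delta}$ precisely when $n<m\delta$. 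This yields the complementary estimate
\[
I_{m,n}(F,G,H) \lesssim 2^{(n-2m)r - nd/2 + \min\{n,\,m\delta\}}\, \norm{F_m}_{L^2}\norm{G_m}_{L^2}\norm{H_n}_{L^2}.
\]

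Finally I would carry out the summation. I first claim the hypotheses force $k \le \delta$: for $d=2$ this is exactly the assumption $\delta \le 2$, while for $d\ge 3$ it follows by combining the completeness bound $d>1+\delta-2r$ with $r\le 1$ and $\delta>1$. Consequently $m\le n\le km$ implies $n\le m\delta$, so $\min\{n,m\delta\}=n$ and the exponent collapses to the affine function $\gamma(m,n)=n(1+r-d/2)-2rm$. Summing \eqref{bigsum} then follows the template of Proposition~\ref{largedlargedelta}: Cauchy--Schwarz in $n$ over $m\le n \le km$, together with the bounded overlap $\sum_n \norm{H_n}_{L^2}^2 \lesssim \norm{H}_{L^2}^2$, reduces matters to showing that the maximum of $\gamma(m,\cdot)$ over the $n$-range, regarded as an exponent in $m$, is nonpositive; a final Cauchy--Schwarz in $m$ using $\sum_m \norm{F_m}_{L^2}^2 \lesssim \norm{F}_{L^2}^2$ and its analogue for $G$ then closes the estimate. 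Since $\gamma$ is affine in $n$, its maximum occurs at an endpoint: at $n=m$ the $m$-exponent is $m(1-r-d/2)$, which is negative because completeness gives $d>2-2r$, and at $n=km$ it equals $2m(k-\delta)\le 0$ since $k\le\delta$.

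The main obstacle is the construction of the second bound: spotting the kernel splitting above and, above all, recognizing that the cube scale regularizes the first-coordinate weight to $2^{\min\{n,m\delta\}}$ rather than the lossy $2^{m\delta}$. The borderline dimension $d=2,\ \delta=2$ (where $k=\delta$) is the delicate case, since there the $m$-exponent is exactly zero; the argument must therefore survive this critical, non-decaying situation by relying solely on the $\ell^2$ summability $\sum_m \norm{F_m}_{L^2}^2\lesssim\norm{F}_{L^2}^2$ rather than on any geometric gain in $m$.
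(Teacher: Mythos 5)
Your overall scheme parallels the paper's proof more closely than you may expect: the paper also splits the kernel via $\mu_{r,\delta}(\xi)\asymp 2^{mr}(\abs{\xi_1}+2^{-m\delta})$ into two one-variable singular weights, and its final summation is the same two-endpoint analysis (your endpoint identity $\gamma(m,km)=2m(k-\delta)$ and the reduction $k\le\delta$ are both correct). The fatal gap is at what you rightly call the technical heart: the claim that the cube scale $2^{-n}$ regularizes the weight, i.e.\ that $(\abs{\eta_1}+2^{-m\delta})^{-1}$ is comparable on each cube $\tilde Q_\ell$ of side $\asymp 2^{-n}$ to a constant $\lambda_\ell$ with $\sup_\ell\lambda_\ell\asymp 2^{\min\{n,m\delta\}}$. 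On cubes meeting the hyperplane $\{\eta_1=0\}$ the weight ranges from $\asymp 2^{n}$ up to $2^{m\delta}$, so it is comparable to no constant precisely when $n<m\delta$ --- which is exactly the regime $m\le n\le km\le \delta m$ you work in --- and the honest supremum there costs $2^{m\delta}$, not $2^{n}$. Moreover, your claimed complementary estimate is not merely unproved but false. Fix $m\ll n<\delta m$ and let $Q,Q'$ be boxes with first side $2^{-m\delta}$ and remaining sides $2^{-n}$, positioned so that for $\xi\in Q$, $\eta\in Q'$ one has $\xi_1\asymp\eta_1\asymp\xi_1+\eta_1\asymp 2^{-m\delta}$, $\abs{\xi}\asymp\abs{\eta}\asymp 2^{-m}$, and $\abs{\xi+\eta}\asymp 2^{-n}$ (second coordinates near $\tfrac34\,2^{-m}$ and $-\tfrac34\,2^{-m}+c\,2^{-n}$). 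With $F=\vchi_{Q}$, $G=\vchi_{Q'}$, $H=\vchi_{Q+Q'}$, the kernel is $\asymp 2^{nr+m(\delta-2r)}$ on $Q\times Q'$, so $I_{m,n}\gtrsim 2^{nr+m(\delta-2r)}\abs{Q}^2$, while $\norm{F}_{L^2}\norm{G}_{L^2}\norm{H}_{L^2}\asymp\abs{Q}^{3/2}$ with $\abs{Q}\asymp 2^{-m\delta-(d-1)n}$. The resulting ratio is $2^{(n-2m)r-nd/2+n/2+m\delta/2}$, which exceeds your claimed exponent $(n-2m)r-nd/2+n$ by the factor $2^{(m\delta-n)/2}\to\infty$. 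In fact this example saturates the paper's bound $I_{m,n}\lesssim 2^{n(r-d/2+1/2)+m(\delta/2-2r)}\norm{F_m}_{L^2}\norm{G_m}_{L^2}\norm{H_n}_{L^2}$, so no estimate as strong as the one you propose can hold by any method.

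The correct repair is the paper's: the singular first coordinate must be resolved at its own scale $2^{-\delta m}$, by tiling with anisotropic rectangles $R_{p,\pi}(\alpha)$ of dimensions $2^{-\delta m}\times 2^{-n}\times\cdots\times 2^{-n}$, on which the weight genuinely is $\asymp 2^{m\delta}/\max\{1,\abs{p}\}$; the harmonic weights are then summed in $\ell^2$ over $p,q\in\Z$ (using $\sum_p \max\{1,\abs{p}\}^{-2}<\infty$), and the rectangle volume contributes $2^{(-\delta m-(d-1)n)/2}$ in place of your isotropic $2^{-nd/2}$. The resulting exponent $n(r-d/2+1/2)+m(\delta/2-2r)$ is weaker than yours by exactly $(m\delta-n)/2$, and the endpoint bookkeeping must be redone accordingly, as in the paper: when $r-d/2+1/2\le 0$ the $n=m$ endpoint closes using $d>1+\delta-2r$, while when $r-d/2+1/2>0$ (forcing $d=2$ since $r\le1$) the $n=km$ endpoint gives the exponent $m\bigl(\delta/2-2r+k(r-1/2)\bigr)$, which is nonpositive if and only if $\delta\le2$ --- this is where that hypothesis genuinely enters, and it is precisely the slack your (false) stronger bound would have papered over.
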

	
	\begin{proof}
		We define
		\begin{equation}
			R_{p, \pi}(\alpha) = 2^{-\delta m}[-\alpha/2 + p, \alpha/2 + p] \times 2^{-n}\prod_{k = 2}^{d} [-\alpha/2 + \pi_{k - 1}, \alpha/2 + \pi_{k - 1}]
		\end{equation}
		For $\xi \in R_{p, \pi}(1)$ and $\eta \in R_{q, \sigma}(1)$, we get
		\begin{equation}
			2^{-n}(\pi_{k - 1} + \sigma_{k - 1} - 1/2) \le \xi_{k} + \eta_k \le 2^{-n}(\pi_{k - 1} + \sigma_{k - 1} - 1/2)
		\end{equation}
		Combining with the fact that $\abs{\xi + \eta} \le 2^{-n + 2}$, we get
		\begin{equation}
			\abs{\pi_{k - 1} + \sigma_{k - 1}} \le \abs{\pi_{k - 1} + \sigma_{k - 1} - 2^n(\xi_k + \eta_k)} + \abs{2^n(\xi_k + \eta_k)} \le 9/2
		\end{equation}
		In particular, $\eta \in R_{q, -\pi}(9)$. Combined with $\xi \in R_{p, \pi}(1)$, we get $\xi + \eta \in R_{p + q, 0}(10)$. Thus
		\begin{multline}
			I_{m, n}(F, G, H) \\
			\le \sum_{p, q \in \Z} \sum_{\pi, \sigma \in \Z^{d - 1}} \int_{E_1} \fr{\mu_{r, \delta}(\xi + \eta)}{\mu_{r, \delta}(\xi)\mu_{r, \delta}(\eta)} (F_m \vchi_{R_{p, \pi}}(1))(\xi) (G_m \vchi_{R_{q, -\pi}}(9))(\eta) (H_n \vchi_{R_{p + q, 0}}(10))(\xi + \eta) \d \xi \d \eta.
		\end{multline}
		Now if $\xi \in R_{p, \pi}(1) \cap 2^{-m - 1}A$, then $2^{\delta m} \abs{\xi_1} \ge \abs{p} - 1/2$, so
		\begin{equation}
			\mu_{r, \delta}(\xi) = \fr{\abs{\xi_1} + \abs{\xi}^\delta}{\abs{\xi}^r}\gtrsim \fr{2^{-\delta m}\abs p + 2^{-\delta m}}{2^{-m r}} \gtrsim 2^{-m(\delta-r)}\abs{p}
		\end{equation}
		and similarly
		\begin{equation}
			\mu_{r, \delta}(\eta) \gtrsim 2^{-m(\delta -r)}\abs{q}.
		\end{equation}
		Then because $\xi + \eta \in R_{p + q, 0}(10) \cap 2^{-n-1}A$, we get
		\begin{multline}
			\mu(\xi + \eta) = \fr{\abs{\xi_1 + \eta_1} + \abs{\xi + \eta}^\delta}{\abs{\xi + \eta}^r}
			\lesssim 2^{nr}(\abs{\xi_1 + \eta_1} + 2^{-n\delta}) \\ = 2^{nr - m\delta }(2^{m\delta }\abs{\xi_1 + \eta_1} + 2^{\delta(m - n)}) \lesssim 2^{nr - m\delta}(\abs{p} + \abs q)
		\end{multline}
		Putting everything together, we get
		\begin{equation}
			\frac{\mu_{r, \delta}(\xi + \eta)}{\mu_{r, \delta}(\xi)\mu_{r, \delta}(\eta)} \lesssim 2^{nr + m (\delta - 2r)} \pr{\fr 1{\abs p} + \fr 1{\abs q}}
		\end{equation}
		Then for fixed $m, n \in \N$, we see
		\begin{multline}
			2^{-nr - m(\delta - 2r)}I_{m,n}(F, G, H) \\ 
			\le  \sum_{p, q \in \Z} \sum_{\pi \in \Z^{d - 1}} \int_{B(0, 1)} \pr{\fr 1{\abs p} + \fr1 {\abs q}} (F_m \vchi_{R_{p, \pi}(1)})(\xi)\int_{B(0, 1)} (G_m \vchi_{R_{q, -\pi}(9)})(\eta) (H_n \vchi_{R_{p + q, 0}(10)})(\xi + \eta) \;\d \eta \;\d \xi \\
			\le  \sum_{p, q \in \Z} \sum_{\pi\in \Z^{d - 1}} \int_{B(0, 1)} \pr{\fr 1{\abs p} + \fr1 {\abs q}} F_m(\xi)\vchi_{R_{p, \pi}(1)}(\xi) \norm*{G_m \vchi_{R_{q, -\pi}(9)}}_{L^2} \norm*{H_n \vchi_{R_{p + q, 0}(10)}}_{L^2}\; \d \xi \\
			\le \sum_{p, q \in \Z} \sum_{\pi \in \Z^{d - 1}}  \pr{\fr 1{\abs p} + \fr1 {\abs q}} \norm*{F_m \vchi_{R_{p, \pi}(1)}}_{L^2} \norm*{ \vchi_{R_{p, \pi}(1)}}_{L^2} \norm*{G_m \vchi_{R_{q, -\pi}(9)}}_{L^2} \norm*{H_n \vchi_{R_{p + q, 0}(10)}}_{L^2} \\
			= 2^{\pr{-\delta m - (d - 1)n}/2}\sum_{p, q \in \Z} \sum_{\pi \in \Z^{d - 1}}  \pr{\fr 1{\abs p} + \fr1 {\abs q}} \norm*{F_m \vchi_{R_{p, \pi}(1)}}_{L^2} \norm*{G_m \vchi_{R_{q, -\pi}(9)}}_{L^2} \norm*{H_n \vchi_{R_{p + q, 0}(10)}}_{L^2} 
		\end{multline}
		Firstly, we handle the sum over $\pi \in \Z^{d - 1}$. Indeed, we find for each $p, q \in \Z$
		\begin{align}
			\sum_{\pi \in \Z^{d - 1}} \norm*{F_m \vchi_{R_{p, \pi}(1)}}_{L^2} \norm*{G_m \vchi_{R_{q, -\pi}(9)}}_{L^2} &\le \pr{ \int_{\R^d} \abs*{F_m}^2 \sum_{\pi \in \Z^{d - 1}}  \vchi_{R_{p, \pi}(1)}}^{1/2} \pr{ \int_{\R^d} \abs*{G_m}^2 \sum_{\pi \in \Z^{d - 1}}  \vchi_{R_{q, -\pi}(9)}}^{1/2} \\
			&\le \norm*{F_m \vchi_{ \bigcup_{\pi \in \Z^{d - 1}} R_{p, \pi}(1)} }_{L^2} \norm*{G_m \vchi_{ \bigcup_{\pi \in \Z^{d - 1}} R_{q, -\pi}(9)} }_{L^2}
		\end{align}
		Now we consider the sums over $p, q$. First we consider the term containing $\frac{1}{\max\{1,\abs p\}}$. We find
		\begin{align}
			&\sum_{p, q \in \Z} {\fr 1{\abs p} } \norm*{F_m \vchi_{ \bigcup_{\pi \in \Z^{d - 1}} R_{p, \pi}(1)} }_{L^2} \norm*{G_m \vchi_{ \bigcup_{\pi \in \Z^{d - 1}} R_{q, -\pi}(9)} }_{L^2} \norm*{H_n \vchi_{R_{p + q, 0}(10)}}_{L^2}  \\
			\lesssim \; &\norm*{G_m \vchi_{\bigcup_{q \in \Z} \bigcup_{\pi \in \Z^{d - 1}} R_{q, -\pi}(9)} }_{L^2} \sum_{p \in \Z} \frac{1}{\max\{1, \abs p\}} \norm*{F_m \vchi_{ \bigcup_{\pi \in \Z^{d - 1}} R_{p, \pi}(1)} }_{L^2} \norm*{H_n \vchi_{\bigcup_{q \in \Z} R_{p + q, 0}(10)}}_{L^2} \\
			\le\;&\norm*{G_m}_{L^2} \norm*{H_n}_{L^2}\pr{\sum_{p \in \Z} \frac{1}{\max\{1, \abs p^2\}}}^{1/2} \norm*{F_m \vchi_{ \bigcup_{p \in \Z} \bigcup_{\pi \in \Z^{d - 1}} R_{p, \pi}(1)} }_{L^2} \lesssim  \norm*{F_m}_{L^2} \norm*{G_m}_{L^2} \norm*{H_n}_{L^2}
		\end{align}
		Analogously, we can compute the sum containing $\frac{1}{\max\{1, \abs q\}}$ to find
		\begin{equation}
			\sum_{p, q \in \Z} {\fr 1{\abs q} } \norm*{F_m \vchi_{ \bigcup_{\pi \in \Z^{d - 1}} R_{p, \pi}(1)} }_{L^2} \norm*{G_m \vchi_{ \bigcup_{\pi \in \Z^{d - 1}} R_{q, -\pi}(9)} }_{L^2} \norm*{H_n \vchi_{R_{p + q, 0}(10)}}_{L^2} \lesssim  \norm*{F_m}_{L^2} \norm*{G_m}_{L^2} \norm*{H_n}_{L^2}
		\end{equation}
		Combining the previous bounds, we then arrive at the estimate
		\begin{equation}\label{bigbound}
			I_{m, n}(F, G, H) \lesssim 2^{n\pr{r - \fr d2 + \fr 12} + m\pr{\fr \delta 2 - 2r} } \norm{F_m}_{L^2} \norm{G_m}_{L^2} \norm{H_n}_{L^2}.
		\end{equation}
		
		We now split to two cases. In the first assume that $r - \fr d2 + \fr 12 \le 0$, looking at the inner sum in (\ref{bigsum}) we bound using the first term to find
		\begin{multline}
			\sum_{n = m}^{km} I_{m,n}(F, G, H) \lesssim 2^{m\pr{\fr \delta 2 - 2r} } \pr{\sum_{n = m}^{km} 2^{n(2r - d + 1)}}^{1/2} \norm{H}_{L^2} \norm{F_m}_{L^2} \norm{G_m}_{L^2} \\
			\lesssim 2^{m\pr{\pr{\fr \delta2 - 2r} + \pr{r - \fr d2 + \fr 12}}  } \norm{H}_{L^2} \norm{F_m}_{L^2} \norm {G_m}_{L^2}
		\end{multline}
		Now summing over $m$, this converges if  and only if
		$
		\fr{\delta - d + 1}{2} \le r
		$
		which is true by hypothesis. 
		
		For the other case, we have $r - \fr d2 + \fr 12 > 0$. In particular, since we have already restricted $r \le 1$ this implies that $d = 2$. Plugging in $d = 2$ in (\ref{bigbound}), we again analyze the inner sum of (\ref{bigsum}), this time bounding using the last term to see   \begin{multline}
			\sum_{n = m}^{km} I_{m,n}(F, G, H) \lesssim 2^{m\pr{\fr \delta 2 - 2r} } \pr{\sum_{n = m}^{km} 2^{n(2r - 2 + 1)}}^{1/2} \norm{H}_{L^2} \norm{F_m}_{L^2} \norm{G_m}_{L^2} \\
			\lesssim 2^{m\pr{\pr{\fr \delta 2 - 2r} + k(r - 1/2)}} \norm{H}_{L^2} \norm{F_m}_{L^2} \norm{G_m}_{L^2} 
			\lesssim 2^{m\pr{\pr{\fr \delta 2 - 2r} + \fr{2(\delta - r)}{2 - r}(r - 1/2)}} \norm{H}_{L^2} \norm{F_m}_{L^2} \norm{G_m}_{L^2}
		\end{multline}
		Again summing over $m$, this converges when
		\begin{multline}
			\pr{\fr \delta 2 - 2r} + \fr{2(\delta - r)}{2 - r}(r - 1/2) \le 0 
			\bimp (2 - r)\pr{\fr \delta 2 - 2r} + (\delta - r)\left(2r -1\right)  \le 0 \\
			\bimp  \fr \delta 2- 2r - \fr{r\delta}2 + 2r^2 + \fr{d \delta}{4} - rd + 2\delta r - 2r^2 -2\delta + 2r + \delta - r \le 0  \bimp \delta \le 2,
		\end{multline}
		the latter condition of which is assumed to hold by hypothesis.
	\end{proof}
	
	In fact, the assumption that $\delta \le 2$ when $d = 2$ was necessary, as the next proposition shows.
	
	\begin{prop}\label{counterexample}
		Suppose that $d = 2$, $r >0$, and $\delta > 2$. Then there does not exist a constant $C > 0$ such that 
		\begin{equation}
			\int_{B(0, 1)^2} \fr{\mu_{r, \delta}(\xi + \eta)}{\mu_{r, \delta}(\xi)\mu_{r, \delta}(\eta)} F(\xi) G(\eta) H(\xi + \eta) \; \d \xi \; \d \eta \le C \norm*{F}_{L^2}\norm*{G}_{L^2} \norm*{H}_{L^2}
		\end{equation}
		for every $F, G, H \in L^2(\R^2)$.
	\end{prop}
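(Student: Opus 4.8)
The plan is to disprove the existence of such a $C$ by exhibiting, for each $m \in \N$, nonnegative test functions $F_m, G_m, H_m \in L^2(\R^2)$ (indicators of rectangles) for which the ratio $I(F_m,G_m,H_m)/(\norm{F_m}_{L^2}\norm{G_m}_{L^2}\norm{H_m}_{L^2})$ grows like $2^{mr(\delta-2)}$, which tends to $\infty$ precisely because $r>0$ and $\delta>2$. The heuristic driving the construction is that the kernel $\mu_{r,\delta}(\xi+\eta)/(\mu_{r,\delta}(\xi)\mu_{r,\delta}(\eta))$ is largest when $\xi$ and $\eta$ are nearly orthogonal to $e_1$ — so that $\mu_{r,\delta}(\xi)\asymp\abs{\xi}^{\delta-r}$ and $\mu_{r,\delta}(\eta)\asymp\abs{\eta}^{\delta-r}$ are as small as possible — while $\xi+\eta$ is nearly parallel to $e_1$, making $\mu_{r,\delta}(\xi+\eta)\asymp\abs{\xi+\eta}^{1-r}$ as large as possible. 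The obstruction to pushing all three factors to their extremes simultaneously is the additive constraint $(\xi+\eta)_1=\xi_1+\eta_1$: the first coordinates of $\xi,\eta$ must be tiny, yet must sum to the relatively large first coordinate of $\xi+\eta$. I expect to resolve this tension by working at the critical secondary scale $\abs{\xi+\eta}\asymp 2^{-m\delta}$, which is exactly the scale $\abs{\xi_1}\asymp\abs{\xi}^{\delta}$ at which the two terms of $\mu_{r,\delta}(\xi)$ balance; this is precisely the diagonal endpoint $n\asymp m\delta$ that made the sum in Proposition \ref{smalldlargedelta} diverge.

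Concretely, I would fix $m \ge 1$ and set, writing boxes in $(\xi_1,\xi_2)$ coordinates,
\begin{equation}
  B_\xi = [2^{-m\delta}, 2\cdot 2^{-m\delta}]\times[2^{-m}, 2^{-m}+2^{-m\delta}], \qquad B_\eta = [2^{-m\delta}, 2\cdot 2^{-m\delta}]\times[-2^{-m}-2^{-m\delta}, -2^{-m}],
\end{equation}
and let $B_\zeta = [2\cdot 2^{-m\delta}, 4\cdot 2^{-m\delta}]\times[-2^{-m\delta}, 2^{-m\delta}]$, which equals the Minkowski sum $B_\xi+B_\eta$. I then take $F_m=\vchi_{B_\xi}$, $G_m=\vchi_{B_\eta}$, and $H_m=\vchi_{B_\zeta}$, all supported in $B(0,1)$ for $m\ge 1$. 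Each of $B_\xi, B_\eta, B_\zeta$ is (comparable to) a square of side $2^{-m\delta}$, so $\norm{F_m}_{L^2}\asymp\norm{G_m}_{L^2}\asymp\norm{H_m}_{L^2}\asymp 2^{-m\delta}$.

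Next I would verify that on these boxes the three multiplier factors are comparable, uniformly in $m$, to their nominal values. For $\xi\in B_\xi$ we have $\abs{\xi}\asymp 2^{-m}$ and $\abs{\xi_1}\asymp 2^{-m\delta}\asymp\abs{\xi}^{\delta}$, so $\mu_{r,\delta}(\xi)\asymp 2^{-m(\delta-r)}$, and likewise $\mu_{r,\delta}(\eta)\asymp 2^{-m(\delta-r)}$ on $B_\eta$; for $\xi+\eta$ with $(\xi,\eta)\in B_\xi\times B_\eta$ the second coordinates nearly cancel while the first coordinates add, giving $\abs{\xi+\eta}\asymp\abs{(\xi+\eta)_1}\asymp 2^{-m\delta}$ and hence $\mu_{r,\delta}(\xi+\eta)\asymp 2^{-m\delta(1-r)}$. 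Therefore the kernel satisfies
\begin{equation}
  \frac{\mu_{r,\delta}(\xi+\eta)}{\mu_{r,\delta}(\xi)\mu_{r,\delta}(\eta)} \asymp \frac{2^{-m\delta(1-r)}}{2^{-2m(\delta-r)}} = 2^{m(\delta+r(\delta-2))}
\end{equation}
throughout $B_\xi\times B_\eta$. Since $H_m=1$ on $B_\xi+B_\eta$, this yields $I(F_m,G_m,H_m)\asymp 2^{m(\delta+r(\delta-2))}\abs{B_\xi}\abs{B_\eta}\asymp 2^{m(\delta+r(\delta-2))}2^{-4m\delta}$, so that
\begin{equation}
  \frac{I(F_m,G_m,H_m)}{\norm{F_m}_{L^2}\norm{G_m}_{L^2}\norm{H_m}_{L^2}} \asymp \frac{2^{m(\delta+r(\delta-2))}\,2^{-4m\delta}}{2^{-3m\delta}} = 2^{mr(\delta-2)},
\end{equation}
and since $r>0$ and $\delta>2$ the right-hand side tends to $+\infty$ as $m\to\infty$, contradicting any uniform bound.

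The main obstacle, as flagged above, is the geometric one: choosing the box dimensions and the relative scales $2^{-m}$ versus $2^{-m\delta}$ so that $\mu_{r,\delta}(\xi)$ and $\mu_{r,\delta}(\eta)$ sit in their minimal $\abs{\xi}^{\delta-r}$ regime while $\mu_{r,\delta}(\xi+\eta)$ simultaneously sits in its maximal $\abs{\xi+\eta}^{1-r}$ regime, all subject to $\xi_1+\eta_1=(\xi+\eta)_1$ and to the near-cancellation $\xi_2+\eta_2\approx 0$ that keeps $\xi+\eta$ parallel to $e_1$. The secondary, more routine, point is to confirm that the implied constants in the three $\asymp$ relations for $\mu_{r,\delta}$ are independent of $m$, which follows since within each box $\abs{\xi}$, $\abs{\xi_1}$, and $\abs{\xi+\eta}$ vary only by bounded multiplicative factors. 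Finally, I note that the construction uses only $r>0$ and $\delta>2$ and does not invoke $r\le 1$, consistent with the stated hypotheses.
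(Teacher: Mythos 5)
Your proposal is correct and is essentially the paper's own argument: the paper likewise tests $I$ against indicators of squares of side $\asymp 2^{-m\delta}$ centered at $(2^{-m\delta},\pm 2^{-m})$, whose Minkowski sum sits near the $\xi_1$-axis at distance $\asymp 2^{-m\delta}$, and obtains the same unbounded ratio $\asymp 2^{mr(\delta-2)}$. The only differences are notational (the paper writes the boxes as sup-norm balls $Q$, $Q'$, $P=Q+Q'$), so there is nothing substantive to add.
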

	\begin{proof}
		Define the sets $Q = B_\infty((2^{-m\delta}, 2^{-m}), 2^{-m\delta - 2})$, $Q' = B_\infty((2^{-m\delta}, -2^{-m}), 2^{-m\delta - 2})$, and $P = Q + Q' = B_\infty((2^{-m\delta}, 0), 2^{-m\delta - 1})$. We then note for $\xi \in Q$, we have that $\abs \xi \asymp 2^{-m}$, and so
		\begin{equation}
			\mu_{r, \delta}(\xi) = \fr{\abs*{\xi_1} + \abs \xi^{\delta}}{\abs \xi^r} \lesssim \fr{2^{-m\delta} + 2^{-m\delta}}{2^{-m r}} \lesssim 2^{-m(\delta - r)}.
		\end{equation}
		and similarly $\mu_{r, \delta}(\eta) \lesssim 2^{-m(\delta - r)}$ when $\eta \in Q'$. We also note $\xi + \eta \in P$ when $\xi \in Q, \eta \in Q'$, and so $\abs \xi \asymp \abs \eta \asymp 2^{-m \delta}$ and $\abs {\xi + \eta} \asymp 2^{-m\delta}$. Thus we see
		\begin{equation}
			\mu_{r, \delta}(\xi + \eta)= \fr{\abs*{\xi_1 + \eta_1} + \abs {\xi + \eta}^{\delta}}{\abs {\xi + \eta}^r} \gtrsim \fr{2^{-m \delta} + 2^{-m \delta^2}}{2^{-m\delta r}} \gtrsim 2^{-m \delta ( 1- r)}.
		\end{equation}
		Finally, we note that $\mu(Q) \asymp \mu(Q') \asymp \mu(P) \asymp 2^{-2m\delta}$ and so $\norm*{\vchi_{Q}}_{L^2}  \norm*{\vchi_{Q'}}_{L^2}  \norm*{\vchi_{P}}_{L^2} \asymp 2^{-3m\delta}$. On the other hand, we can compute
		\begin{multline}
			\iint_{B(0, 1)^2} \fr{\mu_{r, \delta}(\xi + \eta)}{\mu_{r, \delta}(\xi)\mu_{r, \delta}(\eta)} \vchi_{Q}(\xi) \vchi_{Q'}(\eta) \vchi_{P}(\xi + \eta) \; \d \xi \; \d \eta = \iint_{Q \times Q'} \fr{\mu_{r, \delta}(\xi + \eta)}{\mu_{r, \delta}(\xi)\mu_{r, \delta}(\eta)}  \; \d \xi \; \d \eta \\
			\gtrsim 2^{-m(\delta(1 - r) - 2(\delta - r))} \mu(Q) \mu(Q')  
			\gtrsim 2^{-m(4\delta + \delta(1 - r) - 2(\delta - r))}.
		\end{multline}
		In particular, if the linear functional was bounded, we could find some $C > 0$ such that
		\begin{equation}
			2^{-m(4\delta + \delta(1 - r) - 2(\delta - r) - 3\delta)} \le C
		\end{equation}
		for all $m \in \N$. However,  since $\delta > 2$ and $r >0$ we know that 
		\begin{equation}
			4\delta + \delta(1 - r) - 2(\delta - r) - 3\delta = r(2 - \delta) < 0,
		\end{equation}
		and so making $m$ arbitrarily large, we contradict the inequality.
	\end{proof}
	
	We can now state our main result.
	
	\begin{theorem}\label{thm_algebra_result}
		Suppose  $d > 1 + \delta - 2r$, $r \le 1$, $s > d/2$.  If $d \ge 3$, then $X^s_{r, \delta}(\R^d; \F)$ is an algebra. If $d = 2$, then $X^s_{r, \delta}$ is an algebra if and only if $\delta \le 2$.
	\end{theorem}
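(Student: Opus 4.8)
The plan is to reduce the algebra property to the boundedness of the trilinear functional $I$ and then assemble the estimates already proved. By Proposition \ref{algebrafunctional}, under the standing hypotheses $d > 1+\delta-2r$, $r \le 1$, and $s > d/2$, the space $X^s_{r,\delta}(\R^d;\F)$ is an algebra if and only if $I$ is bounded on $(L^2(\R^d;[0,\infty]))^3$, where Lemma \ref{pos} permits us to restrict attention to nonnegative inputs. Thus the whole theorem becomes a statement about $I = I_0 + I_1$, and the work is to decide, as a function of the parameters, whether both pieces are bounded.

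First I would dispatch $I_0$. By Proposition \ref{E0good} it suffices that $\norm{1/\mu_{r,\delta}}_{L^2} < \infty$, i.e.\ that $\abs{\xi}^{2r}/(\xi_1^2 + \abs{\xi}^{2\delta})$ be integrable near the origin (using $1/\mu_{r,\delta}(\xi)^2 \asymp \abs{\xi}^{2r}/(\xi_1^2 + \abs{\xi}^{2\delta})$ on $B(0,1)$); this is precisely Lemma \ref{low_freq_est} under the completeness hypothesis $d > 1+\delta-2r$. For $I_1$ I would split on the size of $d$ relative to $4\delta - 2r - 2$. If $d \ge 4\delta - 2r - 2$, Proposition \ref{largedlargedelta}(1) bounds $I_1$ outright. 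If $d < 4\delta - 2r - 2$, I would write $I_1 = \sum_{m\ge 0}\sum_{n \ge m} I_{m,n}$ and split the inner sum at the threshold $n = km$ with $k = 2(\delta-r)/(1-r+d/2)$; a one-line computation shows that $d < 4\delta - 2r - 2$ is equivalent to $k > 1$, so $km > m$ and the split is nontrivial. The tail $\sum_m \sum_{n > km} I_{m,n}$ is then controlled by Proposition \ref{largedlargedelta}(2), and the head $\sum_m \sum_{m \le n \le km} I_{m,n}$ by Proposition \ref{smalldlargedelta}.

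The crux of the dimensional dichotomy is that Proposition \ref{smalldlargedelta} needs no extra hypothesis when $d \ge 3$, but requires $\delta \le 2$ when $d = 2$. Hence for $d \ge 3$ every cited estimate applies unconditionally, $I$ is bounded, and $X^s_{r,\delta}(\R^d;\F)$ is an algebra. For $d = 2$ and $\delta \le 2$ the identical assembly goes through, so again we obtain an algebra. For the remaining case $d = 2$ and $\delta > 2$ I would prove failure: the standing hypothesis $d > 1+\delta-2r$ with $d = 2$ forces $2r > \delta - 1 > 1$, hence $r > 1/2 > 0$, so Proposition \ref{counterexample} applies and shows that $I$ is unbounded; the equivalence in Proposition \ref{algebrafunctional} then shows $X^s_{r,\delta}(\R^2;\F)$ is not an algebra. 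This closes the ``only if'' direction for $d = 2$.

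Because Propositions \ref{largedlargedelta}, \ref{smalldlargedelta}, and \ref{counterexample} already carry the analytic weight, the only genuine care needed is bookkeeping: verifying that $k > 1$ holds exactly in the small-dimension regime so the two sub-sums together exhaust $I_1$, and noting that the completeness constraint automatically supplies $r > 0$ in the failure case so that the counterexample is available. The main obstacle therefore lies in those preceding propositions rather than in the assembly; the one subtle point in the assembly itself is checking that the borderline situation $d = 2$, $\delta = 2$ lands on the algebra side, via Proposition \ref{smalldlargedelta}, and that the counterexample only activates for $\delta$ strictly greater than $2$, so that the dividing line in Figure \ref{fig1} is correctly placed.
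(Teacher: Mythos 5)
Your proposal is correct and follows essentially the same route as the paper: the paper's proof is exactly this assembly of Propositions \ref{algebrafunctional}, \ref{E0good}, \ref{largedlargedelta}, and \ref{smalldlargedelta} for the positive cases, and the observation that $d > 1+\delta-2r$ with $d=2$ forces $r > 1/2 > 0$ so that Proposition \ref{counterexample} rules out $d=2$, $\delta>2$. Your added bookkeeping (checking $k>1$ exactly when $d < 4\delta-2r-2$ so the two sub-sums exhaust $I_1$, and invoking Lemma \ref{pos} to pass to nonnegative inputs) is a correct and slightly more careful spelling-out of what the paper leaves implicit.
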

	\begin{proof}
		The $d \ge 3$ and $d = 2, \delta \le 2$ cases follows by combining Propositions \ref{algebrafunctional}, \ref{E0good}, \ref{largedlargedelta}, \ref{smalldlargedelta}. For $d = 2$, $\delta > 2$, we note that we must have $r > 1/2 > 0$ due to our other restrictions, and so Proposition \ref{counterexample} shows that $X^s_{r, \delta}$ is not an algebra in this case.
	\end{proof}

	\section{PDE applications}\label{sec_PDE}
	
	In this section we give a couple short and simple applications of the $X^s_{r,\delta}(\R^d;\F)$ spaces in constructing traveling wave solutions to PDEs.  	Recall the motivating pseudodifferential equation equation from the introduction: $\dt v + (-\Delta)^{\delta/2}  = F$, which reduces to $-\gamma \partial_1 u + (-\Delta)^{\delta/2} u = f$ after the traveling wave reformulation.  We assume here that $\gamma \neq 0$, which corresponds to actual traveling wave solutions and not stationary solutions.  Here the operator $(-\Delta)^{\delta/2}$ comes from a homogeneous function on the Fourier side, namely $\R^d \ni \xi \mapsto (2\pi \abs{\xi})^{\delta} \in [0,\infty)$.

	In fact, the spaces $X^s_{r,\delta}(\R^d;\F)$ are designed to be more flexible by handling more general symbols with a manifest ``bihomogeneity,'' meaning possibly different homogeneous behavior for small and large frequencies.  	To describe this, we let $\varphi : \R^d \to [0,\infty)$ be a continuous function such that 
	\begin{equation}\label{bihom_cond}
		\varphi(\xi) \asymp
		\begin{cases}
			C_0 \abs{\xi}^{\delta} &\text{for } \abs{\xi} \ll 1 \\
			C_1 \abs{\xi}^{\sigma} &\text{for } \abs{\xi} \gg 1
		\end{cases}
	\end{equation}
	for $\delta,\sigma \in \R$ satisfying $\delta >1$ and $\sigma \in \R$.  We write $D = \sqrt{-\Delta}$ and $\varphi(D)$ for the pseudodifferential operator acting via $\widehat{\varphi(D) u}(\xi) = \varphi(\xi) \hat{u}(\xi)$.  As two particular examples: (1) the function $\varphi(\xi) = (2\pi \abs{\xi})^{\delta}$ gives $\varphi(D) = (-\Delta)^{\delta/2}$ from the introduction, and satisfies $\delta = \sigma$; (2) the function $\varphi(\xi) = \abs{\xi} \tanh(\abs{\xi} )$ is of this type with $\delta = 2$ and $\sigma = 1$; this particular $\varphi$ arose in the analysis in \cite{nguyen_tice} and is related to the classic gravity-wave dispersion relation.  We can then consider the modification of the previous pseudodifferential equation: $\p_t v + \ph(D) u = F$, which reduces to $-\gamma \p_1 u + \ph(D) u = f$ after the traveling wave reformulation. Our first result establishes solvability of this linear problem.

	\begin{theorem}\label{pde_double_linear}
		Let $s, r, \delta, \sigma \in \R$ satisfy $1 < \delta$, $d > 1 + \delta - 2r$, $r \ge 0$, $\sigma \le s$, and $1 - r \le s$.  Suppose $\ph: \R^d \to [0, \infty)$ is a continuous function satisfying \eqref{bihom_cond}.  Let $\beta,\gamma \in \R \backslash \{0\}$. Then the map $-\gamma \partial_1 + \beta \ph(D) : X^{s+\sigma}_{r,\delta}(\R^d;\F) \to (H^s \cap \dot{H}^{-r})(\R^d;\F)$ is well-defined and induces a bounded linear isomorphism.  In particular, for each $f \in (H^s \cap \dot{H}^{-r})(\R^d;\F)$ there exists a unique $u \in X^{s+\sigma}_{r,\delta}(\R^d;\F)$ solving 
		\begin{equation}\label{pde_double_linear_1}
			-\gamma \partial_1 u + \beta \ph(D) u = f.
		\end{equation}
	\end{theorem}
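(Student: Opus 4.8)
The plan is to realize $T := -\gamma\partial_1 + \beta\varphi(D)$ as a Fourier multiplier and reduce the entire statement to a single pointwise weight comparison. On the Fourier side $\widehat{Tu}(\xi) = m(\xi)\hat u(\xi)$ with symbol $m(\xi) = -2\pi i\gamma\xi_1 + \beta\varphi(\xi)$, so that $|m(\xi)|^2 = \beta^2\varphi(\xi)^2 + 4\pi^2\gamma^2\xi_1^2 \asymp \varphi(\xi)^2 + \xi_1^2$ since $\beta,\gamma\ne0$. Writing $w(\xi) = \langle\xi\rangle^{2s} + |\xi|^{-2r}$ for the weight defining the intersection norm $\|g\|_{H^s\cap\dot H^{-r}}^2 = \int_{\R^d} w\,|\hat g|^2$, the whole theorem follows once I establish the two-sided bound
\[
  w(\xi)\,|m(\xi)|^2 \asymp \omega_{s+\sigma,r,\delta}(\xi) \qquad\text{for a.e. } \xi\in\R^d .
\]
Indeed, the upper inequality makes $T$ well-defined and bounded (and shows $\widehat{Tu} = m\hat u\in L^1_{\loc}$, since $m$ is continuous), the lower inequality gives injectivity together with the coercive bound $\|u\|_{X^{s+\sigma}_{r,\delta}}\lesssim\|Tu\|_{H^s\cap\dot H^{-r}}$, and the two together produce the inverse by division by $m$.

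I would prove the displayed equivalence by splitting into the frequency regimes dictated by \eqref{bihom_cond}. For $|\xi|\ll1$ one has $\varphi(\xi)\asymp|\xi|^\delta$, hence $|m|^2\asymp|\xi|^{2\delta}+\xi_1^2$; since $r\ge0$ and $|\xi|\le1$ we have $\langle\xi\rangle^{2s}\asymp1\lesssim|\xi|^{-2r}$, so $w\asymp|\xi|^{-2r}$ and therefore $w\,|m|^2\asymp(\xi_1^2+|\xi|^{2\delta})|\xi|^{-2r}=\omega_{s+\sigma,r,\delta}$. For $|\xi|\gg1$ one has $\varphi(\xi)\asymp|\xi|^\sigma$, hence $|m|^2\asymp|\xi|^{2\sigma}+\xi_1^2$; because $s\ge1-r>-r$ we get $2s>-2r$, so $w\asymp\langle\xi\rangle^{2s}\asymp|\xi|^{2s}$, and the key point is that $\xi_1^2\lesssim|\xi|^{2\sigma}$, so that $|m|^2\asymp|\xi|^{2\sigma}$ and $w\,|m|^2\asymp|\xi|^{2(s+\sigma)}\asymp\langle\xi\rangle^{2(s+\sigma)}=\omega_{s+\sigma,r,\delta}$. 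On the compact transition annulus $\{1/2\le|\xi|\le2\}$ all quantities are comparable to positive constants by continuity and positivity of $\varphi$, so the three estimates patch together into the global equivalence.

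The main obstacle is precisely the high-frequency absorption $\xi_1^2\lesssim|\xi|^{2\sigma}$: this is where the order-one operator $-\gamma\partial_1$ must be subordinate to the order-$\sigma$ operator $\beta\varphi(D)$, and it is exactly the mechanism by which the image regularity index drops from $s+\sigma$ to $s$. Since $\xi_1^2\le|\xi|^2\le|\xi|^{2\sigma}$ for $|\xi|\ge1$ precisely when $\sigma\ge1$ (as holds in both motivating examples, where $\sigma\in\{1,\delta\}$), this step goes through in the regime of interest; note that the reverse inequality $\omega_{s+\sigma,r,\delta}\lesssim w\,|m|^2$ holds with no such restriction. A secondary point is that the displayed equivalence forces $|m(\xi)|>0$ for a.e. $\xi\ne0$ (since $\omega_{s+\sigma,r,\delta}>0$ there), so $1/m$ is well-defined almost everywhere.

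Finally, I would assemble the isomorphism. Boundedness and well-definedness follow from the upper bound together with continuity of $m$. For surjectivity, given $f\in(H^s\cap\dot H^{-r})(\R^d;\F)$ I set $\hat u=\hat f/m$; the equivalence gives $\|u\|_{X^{s+\sigma}_{r,\delta}}^2=\int_{\R^d}\omega_{s+\sigma,r,\delta}\,|\hat f|^2/|m|^2\asymp\|f\|_{H^s\cap\dot H^{-r}}^2<\infty$, while $\hat u\in L^1_{\loc}$ follows from Cauchy--Schwarz together with $\int_{B(0,1)}1/\omega_{s+\sigma,r,\delta}<\infty$, which is Lemma \ref{low_freq_est} under the hypothesis $d>1+\delta-2r$ (away from the origin $\hat u\in L^2_{\loc}\subset L^1_{\loc}$ automatically). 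Hence $u\in X^{s+\sigma}_{r,\delta}(\R^d;\F)$, which is a genuine Hilbert space by Theorem \ref{thm_complete_char}, and $Tu=f$; uniqueness is the injectivity already noted. This exhibits $T$ as a bounded linear isomorphism and, in particular, yields unique solvability of \eqref{pde_double_linear_1}.
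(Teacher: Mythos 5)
Your proposal is correct and follows essentially the same route as the paper: both treat $-\gamma\partial_1+\beta\varphi(D)$ as a Fourier multiplier with symbol $m(\xi)=-2\pi i\gamma\xi_1+\beta\varphi(\xi)$ (note $\abs{m}^2=4\pi^2\gamma^2\xi_1^2+\beta^2\varphi(\xi)^2$, no cancellation), obtain injectivity from the fact that $m$ vanishes only on a null set, obtain surjectivity by dividing by $m$, and verify $\hat u\in L^1_{\loc}$ near the origin by Cauchy--Schwarz against Lemma \ref{low_freq_est}. The only organizational difference is that the paper routes the forward bound through Proposition \ref{derivative_estimates}, while you prove the pointwise weight comparison $w(\xi)\abs{m(\xi)}^2\asymp\omega_{s+\sigma,r,\delta}(\xi)$ inline; these amount to the same frequency-regime computation.

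One substantive remark: your explicit flagging of the high-frequency absorption $\xi_1^2\lesssim\abs{\xi}^{2\sigma}$, valid precisely when $\sigma\ge1$, is a genuine catch rather than a defect of your argument. The theorem's hypothesis list ($r\ge0$, $\sigma\le s$, $1-r\le s$, $d>1+\delta-2r$) does not include $\sigma\ge1$, yet it is needed: for $\sigma<1$, testing on $\hat u$ concentrated near $\xi=te_1$ with $t\to\infty$ gives $\norm{\partial_1 u}_{H^s}\asymp t\,\norm{u}_{X^{s+\sigma}_{r,\delta}}$, so the forward map is genuinely unbounded into $(H^s\cap\dot H^{-r})(\R^d;\F)$. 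The paper's own proof silently uses this condition in two places: Proposition \ref{derivative_estimates} yields $\partial_1 u\in\dot H^{-r}$ but not $\partial_1 u\in H^s$, and the two-sided ``$\asymp$'' displayed in the surjectivity computation requires $\abs{m(\xi)}^2\asymp\abs{\xi}^{2\sigma}$ for $\abs{\xi}\ge1$, which again forces $\sigma\ge1$. Both motivating examples have $\sigma\in\{1,\delta\}$ with $\delta>1$, so the natural fix is to add $\sigma\ge1$ to the hypotheses; with that hypothesis made explicit, your proof is complete as written.
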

	\begin{proof}
		Since $r \ge 0$, we have $\delta - r \le \delta$, and by hypothesis we have $\sigma \le s$, so Proposition \ref{derivative_estimates} shows that $-\gamma \partial_1$ and $\ph\pr{D}$ are both bounded linear operators from $X^{s+\sigma}_{r,\delta}(\R^d;\F)$ to $(H^s \cap \dot{H}^{-r})(\R^d;\F)$.  Consider, then, the problem of finding $u$ satisfying \eqref{pde_double_linear_1} for a given $f$.  Applying the Fourier transform, we see that this is equivalent to 
		\begin{equation}
			[-\gamma 2\pi i \xi_1 + \beta \ph(\xi)] \hat{u}(\xi) = \hat{f}(\xi) \text{ for a.e. } \xi \in \R^d.
		\end{equation}
		If a solutions $u$ exists with $f=0$, then since the term in brackets on the left only vanishes at most on a null set, we must have that $\hat{u}=0$ a.e., and hence $u =0$.  Thus, the linear map is injective.  We also learn from this that it is surjective, as we may use this equation to define $\hat{u}$ in terms of $\hat{f}$, and then 
		\begin{multline}
			\norm{u}_{X^{s+\sigma}_{r,\delta}}^2 = \int_{B(0,1)} \frac{\abs{\xi_1}^2 + \abs{\xi}^{2\delta} }{\abs{\xi}^{2r}}  \abs{\hat{u}(\xi)}^2 d\xi 
			+ \int_{B(0,1)^c} \abs{\xi}^{2(s+\sigma)}  \abs{\hat{u}(\xi)}^2 d\xi  \\
			\asymp 
			\int_{B(0,1)} \frac{1}{\abs{\xi}^{2r}}  \abs{\hat{f}(\xi)}^2 d\xi 
			+ \int_{B(0,1)^c} \abs{\xi}^{2s}  \abs{\hat{f}(\xi)}^2 d\xi
			= \norm{f}_{H^s \cap \dot{H}^{-r}}^2,
		\end{multline}
		which shows that $u$ indeed belongs to $X^{s+\sigma}_{r,\delta}(\R^d;\F)$.  Hence, the linear map is an isomorphism.
	\end{proof}

	Next we give an extremely simple but instructive example of how the isomorphism from the previous theorem can be used to solve nonlinear variants of the above traveling wave problem.  Note that the $u$ we obtain from this theorem gives a traveling wave solution by setting $v(x,t) = u(x-\gamma e_1 t)$.
	
	\begin{theorem}\label{pde_double_nonlinear}
        Suppose $\ph: \R^d \to [0, \infty)$ is a continuous function satisfying \eqref{bihom_cond}. 		Let $s, r, \delta, \sigma \in \R$ satisfy $1 < \delta$, $d > 1 + \delta - 2r$, $r \ge 0$, $\sigma \le s$, $1 - r \le s$, and $s + \sigma > d/2$.  If $d=2$, further suppose that $\delta \le 2$.  Suppose that $R>0$ is such that the ball $B(0,R) \subseteq \F$ is the ball of convergence for two analytic functions  $\zeta,\psi : B(0,R) \to \F$  such that $\zeta(0) = \psi(0) =0$ and $\zeta'(0) =\alpha \in \R \backslash \{0\}$ and  $\psi'(0) = \beta \in \R \backslash \{0\}$.  Then there exists an open set $\varnothing \neq \mathcal{U} \subseteq (H^s \cap \dot{H}^{-r})(\R^d;\F)$ such that for each $f \in \mathcal{U}$ there exists a unique $u \in X^{s+\sigma}_{r,\delta}(\R^d;\F)$ satisfying 
		\begin{equation}
			-\gamma \partial_1 [\zeta(u)] +\ph(D) \psi(u) = f.
		\end{equation}
		Moreover, the induced map $\mathcal{U} \ni f \mapsto u \in X^{s+\delta}_{r,\delta}(\R^d;\F)$ is analytic.
	\end{theorem}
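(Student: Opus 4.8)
The plan is to recast the equation as the inversion of a single nonlinear map near the origin and to invoke the analytic inverse function theorem for maps between Banach spaces. Set $X = X^{s+\sigma}_{r,\delta}(\R^d;\F)$ and $Y = (H^s \cap \dot H^{-r})(\R^d;\F)$, and define
\[
	\Phi(u) = -\gamma\,\partial_1[\zeta(u)] + \ph(D)\,\psi(u).
\]
Since $\zeta(0) = \psi(0) = 0$ we have $\Phi(0) = 0$, and solving the stated equation for a given $f$ is exactly solving $\Phi(u) = f$. The first task is to show that $\Phi$ is a well-defined analytic map from a ball $B_X(0,\rho)$ into $Y$.

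The stated hypotheses guarantee, via Theorem \ref{thm_algebra_result}, that $X$ is a Banach algebra; fix $C_0 > 0$ with $\| uv \|_X \le C_0 \| u \|_X \| v \|_X$. Writing the power series $\zeta(z) = \sum_{k \ge 1} a_k z^k$ and $\psi(z) = \sum_{k \ge 1} b_k z^k$, convergent on $B(0,R) \subseteq \F$, I would define the substitution operators $\zeta(u) = \sum_{k \ge 1} a_k u^k$ and $\psi(u) = \sum_{k \ge 1} b_k u^k$. Because $\| u^k \|_X \le C_0^{k-1} \| u \|_X^k$, each map $u \mapsto u^k$ is a bounded $k$-homogeneous polynomial on $X$, and both series converge absolutely and locally uniformly on $B_X(0,R/C_0)$; hence $u \mapsto \zeta(u)$ and $u \mapsto \psi(u)$ are analytic maps from $B_X(0,R/C_0)$ into $X$. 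Composing with the \emph{bounded linear} operators $\partial_1 : X \to Y$ and $\ph(D) : X \to Y$—whose boundedness follows from Proposition \ref{derivative_estimates} together with the bihomogeneity \eqref{bihom_cond} and the inequalities $\delta - r \le \delta$, $\sigma \le s$, and $1 - r \le s$, exactly as in the proof of Theorem \ref{pde_double_linear}—shows that $\Phi : B_X(0,\rho) \to Y$ is analytic with $\rho = R/C_0$.

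Next I would identify the linearization at the origin. The expansions $\zeta(u) = \alpha u + O(\| u \|_X^2)$ and $\psi(u) = \beta u + O(\| u \|_X^2)$ in $X$, which follow from $\zeta'(0) = \alpha$, $\psi'(0) = \beta$ and the algebra estimate above, give
\[
	D\Phi(0)\, v = -\gamma\alpha\,\partial_1 v + \beta\,\ph(D) v.
\]
Since $\gamma\alpha \ne 0$ and $\beta \ne 0$, Theorem \ref{pde_double_linear}, applied with the nonzero constants $\gamma\alpha$ and $\beta$ in place of $\gamma$ and $\beta$, shows that $D\Phi(0)$ is a bounded linear isomorphism $X \to Y$. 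The analytic inverse function theorem then yields open neighborhoods $\mathcal{V} \ni 0$ in $X$ and $\mathcal{U} = \Phi(\mathcal{V}) \ni 0$ in $Y$ such that $\Phi|_{\mathcal{V}} : \mathcal{V} \to \mathcal{U}$ is an analytic diffeomorphism. Its inverse is the desired analytic solution map $\mathcal{U} \ni f \mapsto u \in X$, and for each $f \in \mathcal{U}$ the corresponding $u \in \mathcal{V}$ is unique.

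The main obstacle is the analyticity of the substitution operators $u \mapsto \zeta(u)$ and $u \mapsto \psi(u)$ on $X$; the remainder of the argument is a routine assembly of earlier results. This step is exactly where the Banach-algebra property of $X^{s+\sigma}_{r,\delta}$ is indispensable, and it accounts for the hypothesis $\delta \le 2$ when $d = 2$: by Proposition \ref{counterexample} the algebra property fails for $d = 2$ and $\delta > 2$, in which case the substitution operators need not even map $X$ into itself and the scheme collapses. A secondary point to verify carefully is that $\zeta(u) - \alpha u$ and $\psi(u) - \beta u$ are genuinely $O(\| u \|_X^2)$ in the $X$-norm, so that the identification of $D\Phi(0)$ is valid; this again reduces to the estimate $\| u^k \|_X \le C_0^{k-1} \| u \|_X^k$.
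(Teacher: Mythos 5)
Your proposal is correct and follows essentially the same route as the paper: express the problem as $\Phi(u)=f$, use the Banach algebra property of $X^{s+\sigma}_{r,\delta}$ (Theorem \ref{thm_algebra_result}) to make the substitution operators $u\mapsto\zeta(u),\psi(u)$ analytic, identify $D\Phi(0)v=-\gamma\alpha\,\partial_1 v+\beta\,\ph(D)v$ as an isomorphism via Theorem \ref{pde_double_linear}, and invoke the analytic inverse function theorem. The only cosmetic difference is that you build $\zeta(u),\psi(u)$ directly from power series in the algebra on the ball $\norm{u}_{X^{s+\sigma}_{r,\delta}}<R/C_0$, whereas the paper first uses the embedding $X^{s+\sigma}_{r,\delta}\hookrightarrow C^0_b$ (Proposition \ref{thm_fourier_split_estimates}) to ensure $u(\R^d)\subseteq B(0,R)$ and defines the compositions pointwise; both yield the same analytic map on a neighborhood of $0$.
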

	\begin{proof}
		We begin by noting that since $s + \sigma >d/2$, Proposition \ref{thm_fourier_split_estimates} shows that $X^{s+\sigma}_{r,\delta}(\R^d;\F) \hookrightarrow C^0_b(\R^d;\F)$.   Theorem \ref{thm_algebra_result} shows that $X^{s+\sigma}_{r,\delta}(\R^d;\F)$ is an algebra, but it does not show that it is a Banach algebra.  However, by rescaling the norm on $X^{s+\sigma}_{r,\delta}(\R^d;\F)$ by a fixed constant we may assume without loss of generality that $\norm{uv}_{X^{s+\sigma}_{r,\delta}} \le  \norm{u}_{X^{s+\sigma}_{r,\delta}}\norm{v}_{X^{s+\sigma}_{r,\delta}}$.  We may then select an open set $0 \in \mathcal{V} \subseteq X^{s+\sigma}_{r,\delta}(\R^d;\F)$ such that if $u \in \mathcal{V}$ then $u(\R^d) \subseteq B(0,R)$.  Thus, $\zeta\circ u$ and $\psi \circ u$ are well-defined for $u \in \mathcal{V}$, and this induces analytic maps $\zeta,\psi : \mathcal{V}  \to X^{s+\sigma}_{r,\delta}(\R^d;\F)$.
		
		Proposition \ref{derivative_estimates} and the above show that the map $N :   \mathcal{V}  \to  (H^s \cap \dot{H}^{-r})(\R^d;\F)$ defined by $N(u) =-\gamma \partial_1 \zeta(u) +\ph(D) \psi(u)$ is well-defined and analytic, and by construction $N(0) =0$ and its derivatives satisfies $\mathbf{D} N(0) v = -\alpha \gamma \partial_1 v + \beta \varphi(D) v$.  This linear map is an isomorphism thanks to Theorem \ref{pde_double_linear}, and so we may apply the inverse function theorem (see, for instance, Theorem 10.2.5 in \cite{dieudonne2008foundations}) to conclude.
	\end{proof}

	By a similar argument, we can also prove the following variant, which is a nonlinear ``divergence form'' version of the problem from the introduction.
	
	\begin{theorem}\label{pde_single_nonlinear_div_form}
		Let $0 \le s,r \in \R$ and $1 < \delta \in \R$ satisfy $d > 1 + \delta - 2r$ and $s > d/2$.  If $d=2$, further suppose that $\delta \le 2$.  Suppose that $R>0$ is such that the ball $B(0,R) \subseteq \F$ is the ball of convergence for two analytic functions  $\zeta,\psi : B(0,R) \to \F$  such that $\zeta(0) = \psi(0) = 0$,  $\zeta'(0) = \alpha \in \R \backslash \{0\}$, and  $\psi'(0) =\beta \in \R \backslash \{0\}$.  Then there exists an open set $\varnothing \neq \mathcal{U} \subseteq (H^s \cap \dot{H}^{-r})(\R^d;\F)$ such that for each $f \in \mathcal{U}$ there exists a unique $u \in X^{s+\delta}_{r,\delta}(\R^d;\F)$ satisfying 
		\begin{equation}
			-\gamma \partial_1 [\zeta(u)] - (-\Delta)^{\delta/2-1} \diverge  [(1+ \psi(u) )\nabla u]  = f.
		\end{equation}
		Moreover, the induced map $\mathcal{U} \ni f \mapsto u \in X^{s+\delta}_{r,\delta}(\R^d;\F)$ is analytic.
	\end{theorem}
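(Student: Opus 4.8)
The plan is to mirror the proof of Theorem \ref{pde_double_nonlinear}, casting the problem as an application of the analytic inverse function theorem to a nonlinear map. Introduce the antiderivative $\Psi : B(0,R) \to \F$, $\Psi(w) = \int_0^w \psi$, which is again analytic on $B(0,R)$ with $\Psi(0) = 0$, and set $\Theta = \mathrm{id} + \Psi$, so that $\Theta(0) = 0$ and $\Theta'(0) = 1 + \psi(0) = 1$. Since $s + \delta > d/2$, Proposition \ref{thm_fourier_split_estimates} gives the embedding $X^{s+\delta}_{r,\delta}(\R^d;\F) \hookrightarrow C^0_b(\R^d;\F)$ and Theorem \ref{thm_algebra_result} shows that $X^{s+\delta}_{r,\delta}(\R^d;\F)$ is an algebra (here $s+\delta > d/2$, and $\delta \le 2$ when $d=2$). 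After rescaling the norm we may assume it is a Banach algebra, and then choose an open set $0 \in \mathcal{V} \subseteq X^{s+\delta}_{r,\delta}(\R^d;\F)$ small enough that $u(\R^d) \subseteq B(0,R)$ for every $u \in \mathcal{V}$; exactly as in the previous proof, superposition with $\zeta$, $\psi$, $\Psi$, and $\Theta$ then induces analytic maps $\mathcal{V} \to X^{s+\delta}_{r,\delta}(\R^d;\F)$.

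The crucial step, which recasts the divergence-form nonlinearity as one already handled, is the chain-rule identity $\psi(u)\nabla u = \nabla[\Psi(u)]$. Granting it, I would compute
\begin{equation}
	\diverge[(1 + \psi(u))\nabla u] = \Delta u + \diverge \nabla[\Psi(u)] = \Delta[u + \Psi(u)] = \Delta[\Theta(u)],
\end{equation}
so that, using $(-\Delta)^{\delta/2-1}\Delta = -(-\Delta)^{\delta/2}$, the nonlinear operator collapses to
\begin{equation}
	N(u) := -\gamma \partial_1[\zeta(u)] - (-\Delta)^{\delta/2 - 1}\diverge[(1+\psi(u))\nabla u] = -\gamma \partial_1[\zeta(u)] + (-\Delta)^{\delta/2}[\Theta(u)].
\end{equation}
This is precisely the composition-type nonlinearity of Theorem \ref{pde_double_nonlinear} with $\varphi(D) = (-\Delta)^{\delta/2}$ (so $\sigma = \delta$) and with the second nonlinearity $\psi$ replaced by $\Theta$. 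In particular $N : \mathcal{V} \to (H^s \cap \dot{H}^{-r})(\R^d;\F)$ is well-defined and analytic: indeed $\zeta(u), \Theta(u) \in X^{s+\delta}_{r,\delta}(\R^d;\F)$, and since $r \ge 0$ gives $\delta - r \le \delta$ while $\delta > 1$, Proposition \ref{derivative_estimates} shows that $\partial_1$ and $(-\Delta)^{\delta/2}$ map $X^{s+\delta}_{r,\delta}(\R^d;\F)$ boundedly into $(H^s\cap\dot{H}^{-r})(\R^d;\F)$.

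It then remains to differentiate at the origin and apply the inverse function theorem. By construction $N(0) = 0$, and differentiating termwise with $\zeta'(0) = \alpha$ and $\Theta'(0) = 1$ yields $\mathbf{D}N(0)v = -\alpha\gamma\,\partial_1 v + (-\Delta)^{\delta/2}v$; equivalently, linearizing the original divergence form directly, the contribution of $\psi(u)\nabla u$ drops out at first order because $\nabla u|_{u=0} = 0$, leaving the same linear map (so that the value of $\beta$ is in fact irrelevant to the linearization). Since $\delta > 1$, Theorem \ref{pde_double_linear}, applied with $\varphi(D) = (-\Delta)^{\delta/2}$, $\sigma = \delta$, and nonzero coefficients $\alpha\gamma$ and $1$, shows that $\mathbf{D}N(0)$ is a bounded linear isomorphism $X^{s+\delta}_{r,\delta}(\R^d;\F) \to (H^s\cap\dot{H}^{-r})(\R^d;\F)$ (the required norm equivalence is immediate here because $\delta > 1$ makes the $\partial_1$ symbol lower order at high frequency). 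The analytic inverse function theorem (e.g. Theorem 10.2.5 in \cite{dieudonne2008foundations}) then produces an open set $0 \in \mathcal{U} \subseteq (H^s\cap\dot{H}^{-r})(\R^d;\F)$ together with an analytic local inverse $f \mapsto u$, which is exactly the asserted conclusion.

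The main obstacle I expect is rigorously justifying the identity $\psi(u)\nabla u = \nabla[\Psi(u)]$ (equivalently $\diverge[(1+\psi(u))\nabla u] = \Delta[\Theta(u)]$) at the regularity available here, where $\nabla u$ need not be continuous. This identity is what makes the otherwise delicate product $(1+\psi(u))\nabla u$ tractable: it is a pointwise multiplication against the low-regularity factor $\nabla u$, to which the ideal property of Theorem \ref{ideal} does not directly apply, and the identity trades it for the composition $\Psi(u)$, which lives in $X^{s+\delta}_{r,\delta}(\R^d;\F)$ with full regularity. I would establish it by approximating $u$ with Schwartz functions (dense by Theorem \ref{thm_schwartz_inclusion}), invoking the classical chain rule there, and passing to the limit using the continuity of the superposition operators $u \mapsto \Psi(u), \psi(u)$ on $X^{s+\delta}_{r,\delta}(\R^d;\F)$ together with the boundedness of $\nabla$ out of $X^{s+\delta}_{r,\delta}(\R^d;\F)$ furnished by Proposition \ref{derivative_estimates}.
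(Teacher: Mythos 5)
Your proposal is correct and is essentially the paper's own argument: the paper proves this theorem only by remarking that it follows ``by a similar argument'' to Theorem \ref{pde_double_nonlinear}, and your antiderivative reduction $\diverge[(1+\psi(u))\nabla u] = \Delta[u + \Psi(u)]$ with $\Psi' = \psi$, collapsing the equation to $-\gamma\partial_1[\zeta(u)] + (-\Delta)^{\delta/2}[\Theta(u)] = f$ and applying the analytic inverse function theorem at the linearization $-\alpha\gamma\partial_1 + (-\Delta)^{\delta/2}$, is precisely that argument. Two minor remarks: citing Theorems \ref{pde_double_linear} and \ref{pde_double_nonlinear} verbatim with $\sigma = \delta$ would nominally require $\delta \le s$, which is not assumed here, but your direct verifications via Proposition \ref{derivative_estimates} (where the relevant condition reads $\delta \le s + \delta$, i.e.\ $s \ge 0$) already close that bookkeeping gap; and the obstacle you flag at the end is easier than you fear, since $s + \delta - 1 > d/2$ together with Proposition \ref{thm_fourier_split_estimates} shows $\nabla u$ is continuous and bounded, so $u \in C^1_b(\R^d;\F)$ with values in $B(0,R)$ and the chain rule identity holds classically, with no density argument needed.
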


	\bibliographystyle{abbrv}
	\bibliography{refs.bib}

\begin{thebibliography}{1}

\bibitem{dieudonne2008foundations}
J.~Dieudonne.
\newblock {\em Foundations of Modern Analysis}.
\newblock Pure and Applied Mathematics. Academic Press, 1969.

\bibitem{pausader_etal}
Y.~Guo, C.~Huang, B.~Pausader, and K.~Widmayer.
\newblock On the stabilizing effect of rotation in the 3d {E}uler equations.
\newblock {\em Comm. Pure Appl. Math.}, 76(12):3553--3641, 2023.

\bibitem{koganemaru_and_tice}
J.~Koganemaru and I.~Tice.
\newblock Traveling wave solutions to the inclined or periodic free boundary
  incompressible {N}avier-{S}tokes equations.
\newblock {\em J. Funct. Anal.}, 285(7):Paper No. 110057, 75, 2023.

\bibitem{leoni_and_tice}
G.~Leoni and I.~Tice.
\newblock Traveling wave solutions to the free boundary incompressible
  {N}avier-{S}tokes equations.
\newblock {\em Comm. Pure Appl. Math.}, 76(10):2474--2576, 2023.

\bibitem{nguyen_tice}
H.~Nguyen and I.~Tice.
\newblock Traveling wave solutions to the one-phase muskat problem: existence
  and stability.
\newblock {\em Arch. Rational Mech. Anal.}, to appear.

\bibitem{stevenson_and_tice}
N.~Stevenson and I.~Tice.
\newblock Traveling wave solutions to the multilayer free boundary
  incompressible {N}avier-{S}tokes equations.
\newblock {\em SIAM J. Math. Anal.}, 53(6):6370--6423, 2021.

\end{thebibliography}
	
\end{document}